\def\on{\bar\rho}
\newtheorem{theorem}{Theorem}[section]
\newtheorem{remark}{Remark}[section]
\newcommand{\thatsall}{\hfill$\Box$}
\newtheorem{lemma}[theorem]{Lemma}
\newtheorem{proposition}[theorem]{Proposition}
\newcommand{\n}{\rho}
\newcommand{\lm}{\lambda}
\renewcommand{\div}{ {\rm div }  }
\newcommand{\pa}{\partial}
\renewcommand{\r}{\mathbb{R}}
\newcommand{\ia}{\int_0^T}
\newcommand{\bt}{\begin{theorem}}
\newcommand{\bl}{\begin{lemma}}
\newcommand{\el}{\end{lemma}}
\newcommand{\et}{\end{theorem}}
\newcommand{\ga}{\gamma}
\newcommand{\curl}{{\rm curl} }
\newcommand{\de}{\delta}
\newcommand{\ve}{\varepsilon}
\newcommand{\la}{\label}
\newcommand{\ol}{\overline}
\newcommand{\bn}{\begin{eqnarray}}
\newcommand{\en}{\end{eqnarray}}
\newcommand{\bnn}{\begin{eqnarray*}}
\newcommand{\enn}{\end{eqnarray*}}
\newcommand{\bnnn}{\begin{eqnarray*}}
\newcommand{\ennn}{\end{eqnarray*}}
\newcommand{\ba}{\begin{aligned}}
\newcommand{\ea}{\end{aligned}}
\newcommand{\be}{\begin{equation}}
\newcommand{\ee}{\end{equation}}
\def\O{{\Omega }}
\def\norm[#1]#2{\|#2\|_{#1}}
\newcommand{\no}{\nonumber\\}
\newcommand{\si}{\sigma}
\def\la{\label}
\def\na{\nabla}
\def\on{\bar\n}
\title{Global Classical Solutions to the  Compressible Navier-Stokes Equations with Navier-type slip Boundary Condition in 2D Bounded Domains }
\author{
  Yuebo CAO
 \\
   {\normalsize Department of Mathematics, College of Sciences,}\\
{\normalsize Shihezi University,}\\ {\normalsize
Shihezi 832003, P. R. China }}
\date{ }
\begin{document}
\maketitle

 \begin{abstract}
 We study the barotropic compressible Navier-Stokes equations   with Navier-type boundary condition in a two-dimensional simply connected bounded domain with $C^{\infty}$ boundary $\partial\Omega.$ By some new estimates  on the boundary related to the Navier-type slip boundary condition, the classical solution to the initial-boundary-value problem of this system exists globally in time provided the initial energy is suitably small even if the density  has large oscillations and contains  vacuum states. Futhermore,  we also prove that the oscillation of the density will grow unboundedly in the long run with an exponential rate provided  vacuum (even a point) appears initially. As we known, this is the first result concerning the global existence of classical solutions to the compressible Navier-Stokes equations with Navier-type slip boundary condition and the density containing vacuum initially for general 2D bounded smooth domains.
 \end{abstract}

Keywords: compressible Navier-Stokes equations; global existence; Navier-type slip boundary condition; vacuum.

\section{Introduction}

The viscous barotropic compressible Navier-Stokes equations for isentropic flows express the principles of conservation of mass and momentum in the absence of exterior forces:
   \be \la{a1}  \begin{cases}\n_t+{\rm div} (\n u)=0,\\
 (\n u)_t+{\rm div}(\n u\otimes u)-\mu\Delta u-(\mu+\lambda)\na {\rm
div} u +\na P(\n) =0, \end{cases}\ee
 where $(x,t)\in\Omega\times (0,T]$, $\Omega$ is a domain in $\r^{N}$, $t\ge 0$ is time,   $x$  is the spatial coordinate. $\rho\geq0, u=(u^1,\cdots,u^N)$ and $P(\rho)=a\rho^{\gamma}$$(a>0,\gamma>1)$
are the unknown fluid density, velocity and pressure, respectively. The constants $\mu$ and $\lambda$ are the shear viscosity and bulk coefficients respectively  satisfying the following physical restrictions: \be\la{h3} \mu>0,\quad \mu +  \frac{N}{2} \lambda\ge 0.
\ee

In this paper, we only consider the system \eqref{a1} with Navier-type slip boundary condition in a two-dimensional bounded domain. More precisely, assume that $\Omega\subset\r^2 $ be a connected bounded domain  with the boundary of class $C^{\infty}$, $n=(n^1,n
 ^2)$ is the unit outer normal vector on $\partial\Omega$, the system is solved subject to the given initial data
\be \la{h2} \n(x,0)=\n_0(x), \quad \n u(x,0)=\n_0u_0(x),\quad x\in \Omega,\ee
and  Navier-type slip boundary condition
\be \la{Navi} u \cdot n = 0, \,\,(D(u)\,n+ \vartheta u)_{tangential}=0 \,\,\,\text{on}\,\,\, \partial\Omega,\ee
where $D(u) = (\nabla u+(\nabla u)^{\rm tr})/2$ is the shear stress, $\vartheta$ is a scalar friction function which measures the tendency of the fluid to slip on the boundary, and $(D(u)\,n+ \vartheta u)_{tangential}$ is the projection of tangent plane of $(D(u)\,n+ \vartheta u)$ on $\partial\Omega$.

 The boundary condition \eqref{Navi}, introduced by Navier in \cite{Nclm1} and derived by Maxwell in \cite{MJC1} from the kinetic theory of
gases (see \cite{JWN1}), which indicate that the tangential "slip" velocity, rather than being zero, is
proportional to the tangential stress.

 If we parameterize $\partial\Omega$ by arc length, it is clear that
$$\frac{\partial n}{\partial\omega}=\kappa \omega,$$
where $\omega=(-n^2,n^1)$ and $\kappa$ is  the curvature of $\partial\Omega$. Notice that $u\cdot n=0$ on $\partial\Omega$, we obtain
$$0=\frac{\partial}{\partial\omega}(u\cdot n)=(D(u)\,n)\cdot\omega-\frac{1}{2}\curl u +\kappa u\cdot\omega.$$
Hence, Navier-type slip condition is equivalent to
\be \la{ch1}
u \cdot n = 0, \,\,\curl u=2(\kappa-\vartheta)u\cdot\omega \,\,\,\text{on}\,\,\, \partial\Omega.
\ee
Especially, when $\vartheta=\kappa$, the boundary condition reduces to
\be \la{ch11}
u \cdot n = 0, \,\,\curl u=0 \,\,\,\text{on}\,\,\, \partial\Omega.
\ee
And in the case $\vartheta\rightarrow \infty$, it becomes Dirichlet boundary condition.

The very first work concerning Navier-Stokes with the boundary condition \eqref{ch1} was done by Solonnikov and \v{S}\v{c}adilov \cite{Sva1} for $\vartheta=0$, where the authors proved the existence of a weak solution in $H^1$ of stationary Stokes system with Dirichlet condition on some part of the boundary and Navier-type slip condition on the other part. From then on, several studies have been made on the well-posedness of the problem. However, most of these studies focus on the Euler equations, the purpose is to solve the Euler equations by approximation of the  incompressible Navier-Stokes equations with Navier-type slip condition, we refer to \cite{L2}, \cite{hbdv3}, \cite{CMR}, \cite{XX1} and the references therein. There are few research works paying attention to the compressible Navier-stokes equation with \eqref{ch1}. In the 2D case $\Omega=(0,1)\times (0,1),$
 Vaigant $\&$ Kazhikhov \cite{Vka1} established global classical large solutions to \eqref{a1} with the boundary condition \eqref{ch11}  when $\lambda=\rho^\beta$ with $\beta>3.$ Hoff \cite{Ho3} studied the global existence of weak solutions with the Navier-type slip boundary condition on the half space in $\r^3$ provided the initial energy is suitably small. It should be noted that in \cite{Vka1, Ho3}, the initial density is strictly away from vacuum and the boundary of $\Omega$ is flat. Recently, Cai-Li \cite{cl} studied the existence and exponential growth of global classical solutions to the compressible Navier-Stokes equations with slip boundary condition in 3D bounded domains.

Assume $\Omega$ is a bounded domain in $\r^2$ with the boundary of class $C^{\infty}$, the initial total energy of (\ref{a1}) is defined as:
\be \la{c0}
C_0 \triangleq\int_{\Omega}\left(\frac{1}{2}\n_0|u_0|^2 +G(\rho_0) \right)dx,
\ee
with
\be \la{grho}
G(\rho) \triangleq\rho\int_{\bar{\rho}}^\rho\frac{P(s)-P(\bar{\rho})}{s^2}ds.
\ee
Denote
\bnn
\int fdx\triangleq\int_\Omega fdx,\,\,
\bar{f}\triangleq\frac{1}{|\Omega|}\int_\Omega f dx,
\enn

For   integer $k$ and $1\leq q<+\infty$, $W^{k,q}(\Omega)$ is the standard Sobolev spaces and  $$ W_0^{1,q}(\Omega)\triangleq\{u\in W^{1,q}(\Omega)~\text{:}~u~ \text{is equipped with zero trace on } \partial{\Omega}\}.$$

For some $s\in(0,1)$, the fractional Sobolev space $H^s(\Omega)$ is defined by
 $$ H^s(\Omega)\triangleq\left\{u\in L^2(\Omega)~\text{:} \int_{\Omega\times\Omega}\frac{|u(x)-u(y)|^2}{|x-y|^{n+2s}}dxdy<+\infty \right\},$$ which is a Banach space with the norm:
 $$\| u\|_{H^s(\Omega)}\triangleq \|u\|_{L^2(\Omega)}+\left(\int_{\Omega\times\Omega}\frac{|u(x)-u(y)|^2}{|x-y|^{n+2s}}dxdy\right)^\frac{1}{2}.$$

For simplicity, we denote $L^q(\Omega)$, $W^{k,q}(\Omega)$, $H^k(\Omega) \triangleq W^{k,2}(\Omega),  H_0^1(\Omega)\triangleq W_0^{1,2}(\Omega),$ and ${H^s(\Omega)}$ by $L^q$,  $W^{k,q}$, $H^k$, $H^1_0,$ and ${H^s}$ respectively.

Set $$H_\omega^1(\Omega)\triangleq\{f\in H^1:f\cdot n=0, ~~ {\rm curl}f=2(\kappa-\vartheta)f\cdot\omega ~\mbox{ \rm on } ~\partial \Omega\}.$$

For two $2\times 2$  matrices $A=\{a_{ij}\},\,\,B=\{b_{ij}\}$, we define
 $$ A\colon  B\triangleq \sum\limits_{i,j=1}^{2}a_{ij}b_{ji}.$$

Finally, we set $ \nabla^{\perp}f\triangleq (\frac{\partial f}{\partial x_2},-\frac{\partial f}{\partial x_1})$, $\nabla_jf\triangleq (\nabla f)^j$, $\nabla_j^{\perp}f\triangleq (\nabla^{\perp}f)^j$, when $f$ is a scalar function. For vector function $v=(v^1,v^2)$, we denote $\nabla^{\perp}v\triangleq (\nabla^{\perp}v^1,\nabla^{\perp}v^2)$, $\nabla_j^{\perp}v\triangleq (\nabla_j^{\perp}v_1,\nabla_j^{\perp}v_2),$ $\nabla_jv\triangleq (\nabla_jv_1,\nabla_jv_2),\,j=1,2$. The
material derivative of $v$ is denoted by $\dot v\triangleq v_t+u\cdot\nabla v$.

We can now state our main results, Theorem \ref{th1} and Theorem \ref{th2}, concerning existence and large-time behavior of global classical solutions to the problem  \eqref{a1}-\eqref{ch1}.
\begin{theorem}\la{th1} Let $\Omega $ be a simply connected bounded domain in $\r^2$ with $C^{\infty}$ boundary $\partial\Omega,$ $\vartheta\in H^3$.
 For some $q>2$, $s\in (\frac{1}{2},1]$ and for given positive constants $\hat\rho$, $M$, suppose that  $\vartheta\geq\kappa$ on $\partial\Omega$, and the initial data $(\n_0,u_0)$ satisfies
\be \la{dt1}   (\rho_0 ,P(\rho_0) )  \in  W^{2,q}, \quad  u_0\in H^2\cap H_\omega^1(\Omega), \ee
\be\la{dt2} 0\leq\rho_0\leq\hat{\rho},~~\|u_0\|_{H^s}\leq M, \ee
and the compatibility condition
\be\la{dt3}
-\mu\triangle u_0-(\mu+\lambda)\nabla\div u_0 + \nabla P(\rho_0) = \rho_0^{1/2}g, \ee
for some  $ g\in L^2.$
Then there exists a positive constant $\ve$ depending only on  $\mu,$ $\lambda,$ $\ga,$ $a,$ $\hat{\rho},$ $s,$ $\vartheta,$ $\Omega,$ and $M$  such that
\be\la{dt30}
C_0\leq\varepsilon, \ee
then the problem \eqref{a1}--\eqref{ch1} has a unique global classical solution $(\rho,u)$ in $\Omega\times(0,\infty)$ satisfying
\be\la{dt6}\begin{cases}
 (\rho ,P )\in C([0,T];W^{2,q} ),\\  \na u\in C([0,T];H^1 )\cap  L^\infty(\tau,T; W^{2,q}),\\
u_t\in L^{\infty} (\tau,T; H^2)\cap H^1 (\tau,T; H^1),\\   \sqrt{\n}u_t\in L^\infty(0,\infty;L^2),
\end{cases}\ee
and that for any $0<T<\infty,$
\be\la{dt5}
  \tilde{C}(T)\inf_{x\in\Omega}\rho_0(x)\le\n(x,t)\le 2\hat{\n},\quad  (x,t)\in \O\times[0,T],
\ee
some positive constant $\tilde{C}(T)$ depending only on $T,$ $\mu,$ $\lambda,$ $\ga,$  $a,$ $\hat\rho,$  $s,$ $\Omega$, $M$ and $\vartheta$.
Moreover,  for any $r\in [1,\infty)$ and $p\in [1,\infty),$ there exist positive constants $C$ and $\tilde{\eta}$ depending only  on $\mu,$  $\lambda,$  $\gamma,$ $a$, $s$,  $\hat{\rho}$, $M, \bar{\rho}_0$,  $\Omega$, $r$, and $p$   such that
\be  \la{qa1w} \left(\|\rho-\bar{\rho}_0\|_{L^r}+\|  u\|_{W^{1,p}} +\|\sqrt{\rho}\dot{u}\|^2_{L^2}\right)\leq Ce^{-\tilde{\eta} t}.\ee
\end{theorem}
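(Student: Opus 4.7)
\medskip
\noindent\textbf{Proof proposal.}
The plan is to follow the Huang--Li--Xin framework, as adapted to the 3D slip problem by Cai--Li, and set up a continuation argument: produce a local classical solution in the regularity class \eqref{dt6} by standard parabolic theory using the compatibility condition \eqref{dt3}, then extend it globally by uniform a priori bounds. The bootstrap variables I would control are
\[
A_1(T)\triangleq\sup_{[0,T]}\|\nabla u\|_{L^2}^2+\int_0^T\|\sqrt{\rho}\dot u\|_{L^2}^2dt,\qquad A_2(T)\triangleq\sup_{[0,T]}\sigma\|\sqrt{\rho}\dot u\|_{L^2}^2+\int_0^T\sigma\|\nabla\dot u\|_{L^2}^2dt,
\]
with $\sigma(t)=\min\{1,t\}$, together with the pointwise bound $\rho\le 2\hat\rho$. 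The aim is to improve each of these under the smallness hypothesis $C_0\le\varepsilon$ so that the continuation criterion closes.

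The a priori chain proceeds as follows. The basic energy estimate, after integration by parts against the diffusion under \eqref{Navi}, produces a boundary contribution of the form $\int_{\partial\Omega}(2\vartheta-\kappa)|u|^2$; the hypothesis $\vartheta\ge\kappa$ is precisely what makes this term nonnegative and preserves coercivity of the dissipation. Testing the momentum equation with $u_t$ and then with $\dot u$ (after applying the material-derivative operator) controls $A_1$ and $A_2$, at the cost of curvature-driven boundary pieces generated by $\curl u=2(\kappa-\vartheta)u\cdot\omega$, which I would dominate by a trace inequality combined with $H^s$ interpolation. The key technical lemma is a slip-type elliptic estimate
\[
\|\nabla u\|_{W^{k,p}}\le C\bigl(\|\div u\|_{W^{k,p}}+\|\curl u\|_{W^{k,p}}+\|u\|_{L^p}\bigr),\quad u\in H_\omega^1,
\]
applied to the effective viscous flux $F=(2\mu+\lambda)\div u-(P(\rho)-P(\bar\rho_0))$, which is what converts $L^2$ control of $\dot u$ into control of $\nabla u$ and $P-\bar P$. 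The density bound is then closed by a Zlotnik-type argument: along trajectories $(2\mu+\lambda)(\log\rho)_t+P(\rho)-P(\bar\rho_0)=-F$, and the smallness of $\int_0^T\|F\|_{L^\infty}dt$ (obtained from $A_1,A_2$ and the slip elliptic estimate) improves $\rho\le 2\hat\rho$ to $\rho\le \tfrac{7}{4}\hat\rho$ once $C_0\le\varepsilon$, closing the bootstrap and yielding global existence together with the upper bound in \eqref{dt5}.

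For the long-time behavior \eqref{qa1w}, the uniform global bounds allow one to derive, for $t$ large, a differential inequality of the form
\[
\tfrac{d}{dt}\Psi+\tilde\eta\,\Psi\le 0,\qquad \Psi\simeq \|\nabla u\|_{L^2}^2+\|\sqrt{\rho}\dot u\|_{L^2}^2+\|\rho-\bar\rho_0\|_{L^2}^2,
\]
where $\|\rho-\bar\rho_0\|_{L^2}$ enters via mass conservation (which under \eqref{Navi} forces $\bar\rho(t)\equiv\bar\rho_0$) together with the effective-viscous-flux decomposition. Gronwall yields exponential decay in $L^2$, and interpolation with the uniform $L^\infty$ and higher-regularity bounds promotes it to the $L^r$ and $W^{1,p}$ norms in \eqref{qa1w}. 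The main obstacle throughout is the bookkeeping of boundary terms: every integration by parts in the bounded 2D geometry produces a curvature contribution on $\partial\Omega$, and the delicate task is either to absorb it via $\vartheta\ge\kappa$ or to dominate it by the interior dissipation using the equivalence \eqref{ch1} and trace/$H^s$ interpolation. A secondary technical difficulty is that $\dot u$ is not tangential on $\partial\Omega$, so when testing with $\dot u$ one must split $\dot u=u_t+u\cdot\nabla u$ and treat the convective piece separately, using the identity \eqref{ch1} to convert normal derivatives into tangential ones and thereby preserve the good sign of the viscous term.
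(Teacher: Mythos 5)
Your proposal captures the overall architecture (local existence plus continuation; bootstrap of $A_1$, $A_2$, and $\rho\le 2\hat\rho$; effective viscous flux and div-curl elliptic estimate; Zlotnik along trajectories; Gronwall decay), but there are two places where the details as written would fail to produce the theorem as stated.

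\textbf{Time weights in $A_1$, $A_2$.} You define $A_1(T)=\sup\|\nabla u\|_{L^2}^2+\int\|\sqrt\rho\dot u\|_{L^2}^2$ with no $\sigma$-weight, and $A_2$ with a single $\sigma$. The paper's $A_1$ carries a $\sigma$ weight on both terms, and $A_2$ carries $\sigma^2$. This is not a cosmetic difference: the theorem requires the smallness constant $\varepsilon$ to depend on $\hat\rho$ and $M=\|u_0\|_{H^s}$ only, not on $\|u_0\|_{H^2}$. With your unweighted $A_1$, the evaluation at $t=0$ forces the bootstrap threshold to depend on $\|\nabla u_0\|_{L^2}$, i.e. on the full $H^2$ data, which breaks the stated dependence. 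To get the $H^s$-only dependence one needs the time-weighted quantities, together with the auxiliary $\sup_{t\le\sigma(T)}\int\rho|u|^{2+\nu}dx\le C(\hat\rho,M)$ bound and the Stein--Weiss interpolation carried out by splitting $u=w_1+w_2$ where $w_1$ solves the homogeneous Lam\'e transport problem and $w_2$ solves the one forced by $-\nabla(P-\bar P)$. That interpolation is precisely what converts $\|u_0\|_{H^s}$ control into the $t^{1-s}\|\nabla u\|_{L^2}^2$ bound feeding the Zlotnik argument. None of this appears in your sketch, and the Zlotnik step you describe would otherwise have no usable bound on $\int_0^{\sigma(T)}\|F\|_{L^\infty}dt$ in terms of $M$ alone.

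\textbf{Decay functional.} Your proposed Lyapunov functional $\Psi\simeq\|\nabla u\|_{L^2}^2+\|\sqrt\rho\dot u\|_{L^2}^2+\|\rho-\bar\rho_0\|_{L^2}^2$ will not close directly: the basic energy identity damps only $\|\nabla u\|_{L^2}^2$, and there is no mechanism that converts $\|\nabla u\|_{L^2}^2$-dissipation into decay of $\|\rho-\bar\rho_0\|_{L^2}^2$ without a coupling term. The paper uses $W(t)=\int\bigl(\tfrac12\rho|u|^2+G(\rho)\bigr)dx-\delta_0\int\rho u\cdot\mathcal{B}[\rho-\bar\rho]\,dx$, where $\mathcal{B}$ is the Bogovskii operator; the cross term is what produces $\int G(\rho)dx\lesssim\phi(t)+\bigl(\int\rho u\cdot\mathcal{B}[\rho-\bar\rho]\,dx\bigr)_t$ and makes the Gronwall inequality self-contained. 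Once the decay of $\int(\rho|u|^2+G(\rho))dx$ is established, the decay of $\|\nabla u\|_{L^2}^2$ and $\|\sqrt\rho\dot u\|_{L^2}^2$ follow from the $m=0$ energy-type identities, not the other way around. A further small point: the boundary term in the basic energy estimate is $2\mu\int_{\partial\Omega}(\vartheta-\kappa)|u|^2\,ds$, not $\int_{\partial\Omega}(2\vartheta-\kappa)|u|^2$; the conclusion ($\ge 0$ under $\vartheta\ge\kappa$) is the same, but the coefficient matters if one later needs to absorb it.
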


Then, with the   exponential decay rate  \eqref{qa1w} at hand,  modifying slightly the proof of \cite[Theorem 1.2]{lx},
we can   establish  the following large-time behavior of the gradient of the
density when vacuum states appear initially.
\begin{theorem}\la{th2}
Under the conditions of Theorem \ref{th1}, assume further  that
there exists some point $x_0\in \Omega$ such that $\n_0(x_0)=0.$  Then the unique
global classical solution $(\n,u)$ to the   problem  \eqref{a1}-\eqref{ch1} obtained in
Theorem \ref{th1}  satisfies that for any $\tilde{r}>2,$   there exist positive constants $\hat{C}_1$ and $\hat{C}_2$ depending only  on $\mu$,  $\lambda$,  $\gamma$, $a$, $s$,   $\hat{\rho}$, $M,$ $\bar{\n}_0$, $\Omega$, and $\tilde{r}$   such that for any $t>0$,
\be\la{qa2w}\ba \|\na\n (\cdot,t)\|_{L^{\tilde{r}}}\geq \hat{C}_1 e^{\hat{C}_2 t} . \ea\ee
\end{theorem}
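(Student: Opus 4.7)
The plan, which follows \cite[Theorem 1.2]{lx}, is to transport the initial vacuum point $x_0$ along particle trajectories, combine the resulting pointwise vanishing of $\n$ with mass conservation to force a uniform lower bound on the oscillation, and then interpolate this against the exponential $L^r$-decay \eqref{qa1w}.

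I would start by introducing the characteristic $X(\cdot;x_0)$ defined by $\frac{d}{dt}X(t;x_0)=u(X(t;x_0),t)$ with $X(0;x_0)=x_0$. The slip condition $u\cdot n=0$ on $\p\O$ keeps $X(t;x_0)\in\bar\O$ for all $t\ge 0$, and the regularity \eqref{dt6} (supplemented by a log-Lipschitz argument near $t=0$) makes the integral curve through $x_0$ unique. Integrating the continuity equation $\n_t+u\cdot\na\n+\n\div u=0$ along this curve gives
$$\n(X(t;x_0),t)=\n_0(x_0)\exp\Bigl(-\int_0^t\div u(X(s;x_0),s)\,ds\Bigr)=0,\qquad t\ge 0.$$
Since $\n\ge 0$ and mass is conserved, $\int\n(\cdot,t)\,dx=|\O|\bar\n_0$; combined with the previous display this forces
$$\|\n(\cdot,t)-\bar\n_0\|_{L^\infty(\O)}\geq \bar\n_0>0\qquad\text{for every }t\ge 0.$$

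Next, I would apply the 2D Gagliardo-Nirenberg inequality
$$\|f\|_{L^\infty}\le C\|f\|_{L^r}^{1-\te}\|\na f\|_{L^{\tilde r}}^{\te}+C\|f\|_{L^r},\qquad \te=\frac{2\tilde r}{r(\tilde r-2)+2\tilde r}\in(0,1),$$
valid for $\tilde r>2$ and any $r\in[1,\infty)$, to $f=\n-\bar\n_0$. Combined with \eqref{qa1w} it yields
$$\bar\n_0\le Ce^{-\tilde\eta(1-\te)t}\|\na\n\|_{L^{\tilde r}}^{\te}+Ce^{-\tilde\eta t},$$
and choosing $r$ large so that $\te$ is small and $t$ large enough that the last term is at most $\bar\n_0/2$, I would solve for $\|\na\n\|_{L^{\tilde r}}$ to obtain \eqref{qa2w} with $\hat C_2=\tilde\eta(1-\te)/\te>0$ for $t\ge t_*$; the range $0\le t<t_*$ is absorbed by shrinking $\hat C_1$.

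The main technical obstacle is the global well-posedness and uniqueness of the specific trajectory $X(\cdot;x_0)$. Near $t=0$ the bound $\na u\in C([0,T];H^1)$ from \eqref{dt6} just misses $L^\infty$ in two dimensions, so $u$ need not be Lipschitz up to $t=0$; however, a standard log-Lipschitz/Osgood argument based on the borderline 2D Sobolev embedding (exactly as in \cite{lx}) suffices for uniqueness of the flow line through the single point $x_0$, and for $t\ge\tau>0$ the $W^{1,\infty}$ control afforded by $\na u\in L^\infty(\tau,T;W^{2,q})$ makes the flow classically well-defined. Once this is in hand, the remaining steps are a direct combination of mass conservation, Gagliardo-Nirenberg, and \eqref{qa1w}.
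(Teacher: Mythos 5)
Your proposal is correct and follows essentially the same route as the paper: transport the vacuum point along the Lagrangian flow to conclude $\rho(X(t;x_0),t)\equiv 0$, deduce $\|\rho-\bar\rho_0\|_{L^\infty}\ge\bar\rho_0$, and interpolate against the exponential decay \eqref{qa1w} via the 2D Gagliardo--Nirenberg inequality \eqref{g2}. The only cosmetic differences are that the paper invokes the mean-zero form of \eqref{g2} (with $C_2=0$, since $\overline{\rho-\bar\rho_0}=0$) so the bound holds for all $t>0$ at once, whereas you keep the additive term and absorb small times by shrinking $\hat C_1$; and you spell out the log-Lipschitz/Osgood well-posedness of the trajectory, which the paper takes for granted.
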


A few remarks are in order:
\begin{remark} Since $q>2,$ it follows from Sobolev's inequality and \eqref{dt6}$_1$  that \be\la{soh1}  \n,\na \n \in C(\bar\Omega\times [0,T]).\ee
Moreover, it also follows from \eqref{dt6}$_2$ and \eqref{dt6}$_3$ that  \be \la{soh2} u,\na u, \na^2 u, u_t \in C(\bar\Omega\times [\tau,T]),\ee due to the following simple fact that $$L^2(\tau,T;H^1)\cap H^1(\tau,T;H^{-1})\hookrightarrow C([\tau,T];L^2).$$
Finally, by \eqref{a1}$_1,$ we have \be \n_t=-u\cdot\na \n-\n\div u\in C(\bar\Omega\times [\tau,T]),\ee which together with \eqref{soh1} and \eqref{soh2} shows that the solution obtained by Theorem \ref{th1} is a classical one.
\end{remark}

\begin{remark} It seems that our Theorem \ref{th1} is the first result concerning the global existence of the compressible Navier-Stokes equations \eqref{a1} with the density containing vacuum initially for general 2D bounded smooth domains. We also remind the reader that Navier-Stokes equations \eqref{a1} with the boundary condition \eqref{ch11} is a special case in our theorems. \end{remark}

\begin{remark}
Theorem \ref{th2} implies that   the oscillation of the density will grow unboundedly in the long run with an exponential 
  rate provided  vacuum (even a point) appears initially.  This new phenomena is   somewhat surprisingly since for the  Cauchy problem where without any rate.
\end{remark}
\begin{remark} It would be interesting to study the existence and large time asymptotic
behavior of solutions for the no-slip boundary condition $u=0$ on $\partial\Omega$. This is left for the future.\end{remark}

We now comment on the analysis of this paper. Indeed, our research bases on three observations. First,
 for $v=(v^1,v^2),$ denoting the
material derivative $\dot v\triangleq v_t+u\cdot\nabla v,$    we rewrite $ (\ref{a1})_2 $ in the form
\be \la{hod1}\ba
\rho\dot{u}=\nabla F - \mu\nabla^{\perp}\curl u ,
\ea \ee with
\be \la{dt0}  \text{curl} u \triangleq \pa_1u^2-\pa_2u^1 ,\quad F\triangleq(\lambda+2\mu)\,\div u-(P-\Bar P),\ee where $F$   is called the effective viscous flux and plays an important role in our following analysis.  Combining \eqref{hod1} with the slip boundary condition \eqref{ch1} implies that one can  treat $(\ref{a1})_2$ as a Helmholtz-Wyle decomposition of $\rho\dot{u}$ which   makes it possible to estimate $\nabla F$ and $\nabla \curl u$. Then the second observation comes from the following inequality
$$\|\nabla u\|_{L^q}\leq C(\|\div u\|_{L^q}+\|\curl u\|_{L^q})\,\,\,\text{for any} \,\,\,q>1,$$
geometrically, which is called Gaffney-Friedrichs inequality when $p=2$. Thanks to \cite{vww}, it turns out that for $u\in W^{1,q}$ with $u\cdot n=0$ on $\partial\Omega$, the inequality really holds if and only if $\Omega$ is simply connected. This inequality allows us to control $\nabla u$ by means of $\div u$ and $\curl u$. Finally, since $u\cdot n=0$ on $\partial\Omega$, we have
\bnn u\cdot\nabla u\cdot n=-u\cdot\nabla n\cdot u=-\kappa |u|^2,\enn
which is the key to estimating the integrals on the boundary $\partial\Omega$ together with $\curl u=2(\kappa-\vartheta)u\cdot\omega$ on $\partial\Omega$.

The rest of the paper is organized as follows. First,  some notations, known facts and elementary inequalities needed in later analysis are collected in Section 2. Section 3 and Section 4 are devoted to deriving the necessary a priori estimates on classical solutions which can guarantee the local classical solution to be a global classical one. Finally, the main results, Theorems \ref{th1} and  \ref{th2} will be proved in Section 5.
\section{Preliminaries}\la{se2}

In this section, we review some known facts and elementary inequalities, which we will used later.
\subsection{Some known inequalities and facts}

In this subsection, we will recall some known theorems and facts, which  are frequently applied in this paper.

First, we give an important lemma about the local existence of strong and classical solutions, its proof is similar to \cite[Theorem 1.4]{hxd1}.
\begin{lemma}\la{loc1} Let $\Omega$ be as in Theorem \ref{th1}, assume that $(\n_0,u_0)$ satisfies \eqref{dt1} and \eqref{dt3}. Then there exist a small time $T>0$ and a unique strong solution $(\n,u)$ to the problem \eqref{a1}-\eqref{ch1} on $\Omega\times(0,T]$ satisfying for any $ \tau\in(0,T),$
\be\nonumber\begin{cases}
 (\rho,P )\in C([0,T];W^{2,q} ),\\  \na u\in C([0,T];H^1 )\cap  L^\infty(\tau,T; W^{2,q}),\\
u_t\in L^{\infty} (\tau,T; H^2)\cap H^1 (\tau,T; H^1),\\   \sqrt{\n}u_t\in L^\infty(0,T;L^2) .
\end{cases}\ee \end{lemma}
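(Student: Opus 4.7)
The proof strategy is a standard regularization plus fixed-point argument, adapted to the Navier-type slip boundary condition; I would follow the scheme of Cho-Choe-Kim / Huang-Xin used in \cite{hxd1}, modifying it only where the boundary condition \eqref{ch1} intervenes.

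\textbf{Step 1 (Regularization of the initial density).} Since the initial density $\rho_0\ge 0$ may vanish, I would first replace it by $\rho_0^\delta\triangleq\rho_0+\delta$ for $\delta\in(0,1)$, keeping $u_0$ fixed. For the regularized data $\rho_0^\delta$ the density is strictly positive, so the momentum equation is uniformly parabolic in $u$ and the classical linear theory applies. The plan is to construct a solution $(\rho^\delta,u^\delta)$ to the regularized system, prove a priori bounds independent of $\delta$, and then pass to the limit $\delta\to 0^+$.

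\textbf{Step 2 (Linearisation and fixed point for $\delta>0$).} For fixed $\delta$, I would iterate as follows: given $v^{k}\in C([0,T^*];H^2)\cap L^2(0,T^*;H^3)$ with $v^k\in H^1_\omega$ and $v^k(\cdot,0)=u_0$, solve the linear continuity equation $\rho^{k+1}_t+\div(\rho^{k+1}v^k)=0$ with data $\rho_0^\delta$ by the method of characteristics, then solve the linear Lamé problem
\begin{equation*}
\rho^{k+1}u^{k+1}_t+\rho^{k+1}v^k\cdot\nabla u^{k+1}-\mu\Delta u^{k+1}-(\mu+\lambda)\nabla\div u^{k+1}=-\nabla P(\rho^{k+1}),
\end{equation*}
together with the boundary condition \eqref{ch1} and initial datum $u_0$. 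The key ingredient here is $L^p$/$W^{2,q}$ regularity for the stationary Lamé system with Navier-type slip boundary condition; this is available because Navier slip is a regular oblique/Robin-type condition, the Lopatinski\u{\i} complementing condition holds, and the Gaffney-Friedrichs bound $\|\nabla u\|_{L^q}\leq C(\|\div u\|_{L^q}+\|\curl u\|_{L^q})$ together with \eqref{ch1} gives the missing tangential control on $\partial\Omega$. A standard contraction argument on a short interval $[0,T^*_\delta]$ in the norm $\sup_t(\|u-v\|_{L^2}+\|\rho-\sigma\|_{L^2})$ produces a local strong solution $(\rho^\delta,u^\delta)$ with strictly positive $\rho^\delta$ and regularity as in \eqref{dt6}.

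\textbf{Step 3 (A priori bounds uniform in $\delta$).} This is the main technical step and the one I expect to be the hardest. The basic energy estimate and the maximum principle on the continuity equation give $L^\infty$ control of $\rho^\delta$ on a time interval uniform in $\delta$. To obtain the $H^2$ regularity of $u$ at $t=0$, I would differentiate the momentum equation in $t$, test against $u_t$, and use the compatibility condition \eqref{dt3} precisely at $t=0$: it provides a uniform-in-$\delta$ bound
\begin{equation*}
\|\sqrt{\rho^\delta}u^\delta_t(\cdot,0)\|_{L^2}\leq \|g\|_{L^2}+C\delta^{1/2}\|u_0\|_{H^2},
\end{equation*}
which propagates via a Gr\"onwall argument to yield $\sqrt{\rho^\delta}u^\delta_t\in L^\infty(0,T;L^2)$ and $\nabla u^\delta_t\in L^2(0,T;L^2)$ uniformly. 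Together with elliptic regularity for the Lam\'e operator under \eqref{ch1}, this gives uniform bounds on $\nabla u^\delta\in L^\infty(0,T;H^1)$ and, by bootstrapping using $(\rho,P)\in C([0,T];W^{2,q})$ from the transport equation, on $\nabla u^\delta\in L^\infty(\tau,T;W^{2,q})$ and $u^\delta_t\in L^\infty(\tau,T;H^2)$. Throughout this step, all boundary integrals that arise when integrating by parts are handled with the two observations highlighted in the introduction: $u\cdot\nabla u\cdot n=-\kappa|u|^2$ on $\partial\Omega$ and $\curl u=2(\kappa-\vartheta)u\cdot\omega$ on $\partial\Omega$, using $\vartheta\in H^3$ to make sense of the resulting curvature-weighted traces.

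\textbf{Step 4 (Passing to the limit and uniqueness).} With the uniform bounds of Step 3 in hand, standard compactness (Aubin-Lions) extracts a subsequence $(\rho^{\delta_j},u^{\delta_j})$ converging to a limit $(\rho,u)$ with the regularity listed in Lemma \ref{loc1}. The limit solves \eqref{a1}-\eqref{ch1} in the classical sense on $\Omega\times(0,T]$; the boundary condition passes to the limit by continuity of the trace in the spaces obtained. Uniqueness is proved by standard energy estimates on the difference of two solutions, again using Gaffney-Friedrichs to convert $\|\nabla(u_1-u_2)\|_{L^2}$ into $\|\div(u_1-u_2)\|_{L^2}+\|\curl(u_1-u_2)\|_{L^2}$ and closing a Gr\"onwall loop. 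The hard step remains Step 3: keeping all estimates uniform in $\delta$ while correctly controlling the boundary contributions produced by Navier slip is exactly the place where the argument departs from the purely Dirichlet treatment in \cite{hxd1}, and all subsequent sections of the paper rely on the same boundary identities being applied carefully.
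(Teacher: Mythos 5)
Your proposal is essentially the same approach the paper takes: the paper does not give a detailed proof of Lemma \ref{loc1} at all, but simply states that ``its proof is similar to \cite[Theorem 1.4]{hxd1},'' i.e.\ the Cho--Choe--Kim/Huang regularize-then-iterate scheme with the compatibility condition providing the uniform $L^2$ bound on $\sqrt{\rho}u_t$ at $t=0$, adapted to the slip boundary condition via the Lam\'{e} regularity (Lemma \ref{zhle}) and the div--curl bound (Lemma \ref{xzle}). Your four steps faithfully reconstruct exactly that scheme, including the two places where the slip condition actually enters (the $W^{2,q}$ elliptic regularity for the Lam\'{e} system with Navier slip and the boundary identities $u\cdot\nabla u\cdot n=-\kappa|u|^2$, $\curl u=2(\kappa-\vartheta)u\cdot\omega$), so the proposal matches the intended argument.
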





Next, the following well-known Gagliardo-Nirenberg's inequality (see \cite{nir})
  will be used later.
\begin{lemma}
[Gagliardo-Nirenberg]\la{l1} Assume that $\Omega$ is a bounded Lipschitz domain in $\r^2$. For  $p\in [2,\infty),q\in(1,\infty), $ and
$ r\in  (2,\infty),$ there exists some generic
 constant
$C>0$ which may depend  on $p,\,\,q,$ and $r$ such that for   $f\in H^1({\O }) $
and $g\in  L^q(\O )\cap W^{1,r}(\O), $    we have
\be\la{g1}\|f\|_{L^p(\O)}\le C_1 \|f\|_{L^2}^{\frac{2}{p}}\|\na
f\|_{L^2}^{1-\frac{2}{p}}+C_2\|f\|_{L^2} ,\ee
\be\la{g2}\|g\|_{C\left(\ol{\O }\right)} \le C_1
\|g\|_{L^q}^{q(r-2)/(2r+q(r-2))}\|\na g\|_{L^r}^{2r/(2r+q(r-2))} + C_2\|g\|_{L^2}.
\ee
Moreover, if $f\cdot n|_{\partial\Omega}=0,\,\,\,g\cdot n|_{\partial\Omega}=0,$ or $\bar{f}=0$, $\bar{g}=0$, then the constant $C_2=0.$
\end{lemma}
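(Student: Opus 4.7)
The plan is to reduce both interpolation inequalities to their whole-space Gagliardo--Nirenberg counterparts via a Sobolev extension operator, and then to absorb the lower-order term $C_{2}\|\cdot\|_{L^{2}}$ using a Poincar\'e-type bound in the refined settings.

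First, for \eqref{g1}, I would invoke the scaling-invariant inequality on $\mathbb{R}^{2}$,
\[
\|\tilde f\|_{L^{p}(\mathbb{R}^{2})}\le C\|\tilde f\|_{L^{2}(\mathbb{R}^{2})}^{2/p}\|\nabla\tilde f\|_{L^{2}(\mathbb{R}^{2})}^{1-2/p},\qquad p\in[2,\infty),
\]
which follows from the Sobolev embedding $H^{1}(\mathbb{R}^{2})\hookrightarrow L^{p}$ and the homogeneous interpolation argument. Since $\Omega$ is Lipschitz, there exists a bounded linear extension $E:H^{1}(\Omega)\to H^{1}(\mathbb{R}^{2})$ with $\|Ef\|_{L^{2}(\mathbb{R}^{2})}+\|\nabla Ef\|_{L^{2}(\mathbb{R}^{2})}\le C(\|f\|_{L^{2}(\Omega)}+\|\nabla f\|_{L^{2}(\Omega)})$. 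Applying the whole-space inequality to $Ef$, restricting back to $\Omega$, and distributing the two constituents of each extension norm (using Young's inequality with exponents $p/2$ and $p/(p-2)$ to split cross terms) produces the two-term form \eqref{g1}; the summand $C_{2}\|f\|_{L^{2}}$ captures the non-homogeneous contribution of the extension.

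Next, for the improvement $C_{2}=0$ under $\bar f=0$ or $f\cdot n=0$ on $\partial\Omega$, the key input is a Poincar\'e-type control $\|f\|_{L^{2}}\le C\|\nabla f\|_{L^{2}}$. In the mean-zero case this is the classical Poincar\'e--Wirtinger inequality; in the case $f\cdot n=0$ with $\Omega$ simply connected, one combines the Helmholtz decomposition with the Gaffney--Friedrichs inequality (the same inequality of \cite{vww} mentioned in the introduction) to bound $\|f\|_{L^{2}}$ by $\|\nabla f\|_{L^{2}}$. With this Poincar\'e estimate available, one writes
\[
\|f\|_{L^{2}}=\|f\|_{L^{2}}^{2/p}\|f\|_{L^{2}}^{1-2/p}\le C\|f\|_{L^{2}}^{2/p}\|\nabla f\|_{L^{2}}^{1-2/p},
\]
so the additive $C_{2}$-term is absorbed into the leading interpolation term and can be dropped. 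The proof of \eqref{g2} is structurally identical: the whole-space ingredient is Morrey's embedding $W^{1,r}(\mathbb{R}^{2})\hookrightarrow C(\mathbb{R}^{2})$ for $r>2$ interpolated against $L^{q}$, yielding the exponent $\theta=q(r-2)/(2r+q(r-2))$; the extension and Poincar\'e-absorption arguments proceed as above.

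The main obstacle is not analytic but is purely one of bookkeeping: producing precisely the two-term bound of \eqref{g1}--\eqref{g2}, rather than a weaker $(\|f\|_{L^{2}}+\|\nabla f\|_{L^{2}})^{\text{exponent}}$-type estimate, requires a careful tracking of the homogeneity of each factor when the extension norms are inserted into the whole-space inequality. Once that splitting is executed with Young's inequality, the rest of the argument is routine.
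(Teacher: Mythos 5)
The paper does not prove Lemma \ref{l1}; it is stated as a known fact and attributed to \cite{nir}, so there is no in-paper proof to compare against. Your outline (extend to $\mathbb{R}^{2}$ by a Stein-type operator bounded on $L^{2}$ and on $H^{1}$ separately, apply the scale-invariant Gagliardo--Nirenberg inequality on $\mathbb{R}^{2}$ resp. the Morrey embedding interpolated against $L^{q}$, then split the non-homogeneous part into the $C_{2}\|\cdot\|_{L^{2}}$ term) is the standard argument and would work; the exponent $\theta=q(r-2)/(2r+q(r-2))$ is indeed the unique scaling-invariant choice.

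However, the step you use to pass to $C_{2}=0$ in the case $f\cdot n|_{\partial\Omega}=0$ is wrong as stated. You claim to obtain the Poincar\'e bound $\|f\|_{L^{2}}\le C\|\nabla f\|_{L^{2}}$ by combining a Helmholtz decomposition with the Gaffney--Friedrichs inequality of \cite{vww} (the paper's Lemma \ref{xzle}). That inequality controls $\|\nabla v\|_{L^{q}}$ by $\|\div v\|_{L^{q}}+\|\curl v\|_{L^{q}}$; it says nothing about $\|v\|_{L^{2}}$ versus $\|\nabla v\|_{L^{2}}$ and cannot be used to produce a Poincar\'e inequality. The correct (and simpler) reason is elementary: if $c$ is a constant vector with $c\cdot n\equiv 0$ on $\partial\Omega$ for a bounded planar Lipschitz domain, then since the outward unit normal along the outer boundary curve attains every direction in $S^{1}$, one must have $c=0$. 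Thus the kernel of $\nabla$ inside $\{f\in H^{1}:\ f\cdot n|_{\partial\Omega}=0\}$ is trivial, and the Poincar\'e inequality follows from the usual compactness argument exactly as in the mean-zero case. With that substitution, the absorption $\|f\|_{L^{2}}=\|f\|_{L^{2}}^{2/p}\|f\|_{L^{2}}^{1-2/p}\le C\|f\|_{L^{2}}^{2/p}\|\nabla f\|_{L^{2}}^{1-2/p}$ goes through and your proof is complete.
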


Next,   to get the
uniform (in time) upper bound of the density $\n,$ we need the following Zlotnik  inequality.
\begin{lemma}[\cite{zl1}]\la{le1}   Let the function $y$ satisfies
\bnn y'(t)= g(y)+b'(t) \mbox{  on  } [0,T] ,\quad y(0)=y^0, \enn
with $ g\in C(R)$ and $y,b\in W^{1,1}(0,T).$ If $g(\infty)=-\infty$
and \be\la{a100} b(t_2) -b(t_1) \le N_0 +N_1(t_2-t_1)\ee for all
$0\le t_1<t_2\le T$
  with some $N_0\ge 0$ and $N_1\ge 0,$ then
\bnn y(t)\le \max\left\{y^0,\hat{\zeta} \right\}+N_0<\infty
\mbox{ on
 } [0,T],
\enn
  where $\hat{\zeta} $ is a constant such
that \be\la{a101} g(\zeta)\le -N_1 \quad\mbox{ for }\quad \zeta\ge \hat{\zeta}.\ee
\end{lemma}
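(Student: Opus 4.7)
The plan is to show that $y(t)$ cannot rise appreciably above the ceiling $\max\{y^0,\hat\zeta\}$ by exploiting the fact that whenever $y$ crosses above $\hat\zeta$, the ODE forces a strong dissipation $y'\le -N_1+b'(t)$, which together with the sublinear growth hypothesis on $b$ will consume any excess. Since $g$ is continuous and $g(\infty)=-\infty$, such a $\hat\zeta$ satisfying $g(\zeta)\le -N_1$ for all $\zeta\ge\hat\zeta$ exists; and since $y\in W^{1,1}(0,T)$, $y$ is absolutely continuous, hence continuous on $[0,T]$.

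First I would fix an arbitrary $t\in(0,T]$ and split into two cases according to whether $y$ stays above $\hat\zeta$ throughout $[0,t]$ or not. In the first case, $y(s)>\hat\zeta$ for every $s\in[0,t]$ forces $y^0>\hat\zeta$, so $\max\{y^0,\hat\zeta\}=y^0$; and $g(y(s))\le -N_1$ a.e.\ on $[0,t]$ gives $y'(s)\le -N_1+b'(s)$. Integrating from $0$ to $t$ and applying the hypothesis $b(t)-b(0)\le N_0+N_1 t$ yields $y(t)-y^0\le -N_1 t+(b(t)-b(0))\le N_0$, so $y(t)\le y^0+N_0$, which is the desired bound.

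In the complementary case, I would set $t_1\triangleq\sup\{s\in[0,t]:y(s)\le\hat\zeta\}$, which is well defined. By continuity $y(t_1)\le\hat\zeta$, and either $t_1=t$ (in which case $y(t)\le\hat\zeta\le\max\{y^0,\hat\zeta\}+N_0$ and we are done) or $y(s)>\hat\zeta$ for every $s\in(t_1,t]$. In the latter subcase, the same dissipation argument applies on $(t_1,t]$: $y'(s)\le -N_1+b'(s)$ a.e., and integrating together with $b(t)-b(t_1)\le N_0+N_1(t-t_1)$ produces
$$y(t)-y(t_1)\le -N_1(t-t_1)+(b(t)-b(t_1))\le N_0,$$
so $y(t)\le\hat\zeta+N_0\le\max\{y^0,\hat\zeta\}+N_0$. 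Combining the two cases gives the claim for every $t\in[0,T]$.

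No real obstacle arises here; the only subtlety is to deal cleanly with a possibly oscillating $y$ that repeatedly crosses the threshold $\hat\zeta$. The device of using $t_1=\sup\{s\le t:y(s)\le\hat\zeta\}$—the \emph{last} entrance into the barrier set before time $t$—handles this gracefully, because only what happens after that last crossing matters for bounding $y(t)$ from above, and on that interval the differential inequality $y'\le -N_1+b'$ holds unconditionally.
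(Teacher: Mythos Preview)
Your proof is correct. The paper does not actually prove this lemma; it is quoted from Zlotnik \cite{zl1} without argument, so there is no ``paper's own proof'' to compare against. Your argument---identifying the last time $t_1$ at which $y$ lies below the threshold $\hat\zeta$ and integrating the differential inequality $y'\le -N_1+b'$ on $(t_1,t]$---is the standard and natural one, and it handles the two cases cleanly. The only minor remark is that in Case~1 you should perhaps say $y(s)\ge\hat\zeta$ rather than strict inequality (since the hypothesis \eqref{a101} gives $g(\zeta)\le -N_1$ for $\zeta\ge\hat\zeta$), but this makes no difference to the argument.
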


Consider the Lam\'{e}'s system
\be\la{cxtj1}\begin{cases}
-\mu\Delta u-(\lambda+\mu)\nabla\div u=f, \,\, &x\in\Omega, \\
u\cdot n=0,\,\,\curl u=2(\kappa-\vartheta)u\cdot\omega,\,\,&x\in\partial\Omega,
\end{cases} \ee
where $u=(u_1,u_2),\,\,f=(f_1,f_2)$, $\Omega$ is a bounded domain in $\r^2,$ and $\mu,\lambda$ satisfy the condition \eqref{h3}.

The following estimate is standard.
\begin{lemma}  [\cite{adn}] \la{zhle}
Let $u$ be a solution of the Lam\'{e}'s equation \eqref{cxtj1}, there exists a positive constant $C$ depending only on $\lambda,\,\mu,\,q,\,\,k$ and $\Omega$ such that

(1) If $f\in W^{k,q}$ for some $q\in(1,\infty),\,\, k\geq0,$ then $u\in W^{k+2,q}$ and
$$\|u\|_{W^{k+2,q}}\leq C(\|f\|_{W^{k,q}}+\|u\|_{L^q}),$$

(2) If $f=\nabla g$ and $g\in W^{k,q}$ for some $q\in(1,\infty),\,\,k\geq0,$ then $u\in W^{k+1,q}$ and
$$\|u\|_{W^{k+1,q}}\leq C(\|g\|_{W^{k,q}}+\|u\|_{L^q}).$$
\end{lemma}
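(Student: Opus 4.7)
The plan is to view Lemma \ref{zhle} as a direct application of the Agmon-Douglis-Nirenberg $L^q$ regularity theory for elliptic boundary value problems, after verifying that the Lam\'{e} operator together with the Navier-type boundary operators constitutes an elliptic system with complementing boundary conditions.

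First, I would confirm ellipticity of $Lu:=-\mu\Delta u-(\lambda+\mu)\na\div u$. Its principal symbol $\xi\mapsto \mu|\xi|^2 I+(\lambda+\mu)\xi\otimes\xi$ has determinant $\mu(2\mu+\lambda)|\xi|^4$, which is nonzero for all $\xi\ne 0$ because \eqref{h3} and $\mu>0$ force $2\mu+\lambda>0$. Next, identifying the two boundary operators $B_1u:=u\cdot n$ (order $0$) and $B_2u:=\curl u-2(\kappa-\vartheta)u\cdot\omega$ (order $1$, the lower-order term being harmless for the symbol check), I would flatten the boundary locally, freeze coefficients at a boundary point, and verify the Lopatinski-Shapiro complementing condition by showing that the frozen homogeneous half-space problem admits only the trivial exponentially decaying solution. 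This reduces to an explicit linear-algebra check on the ODE roots of the frozen symbol in the normal variable, which is straightforward.

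With ellipticity and the complementing condition in hand, the estimate in (1) is precisely the conclusion of the $L^q$ Schauder-type theory of \cite{adn} applied to the elliptic boundary value problem $(L;B_1,B_2)$, yielding $\|u\|_{W^{k+2,q}}\leq C(\|f\|_{W^{k,q}}+\|u\|_{L^q})$. For (2), where $f=\na g$, the gain of one derivative is obtained by a standard duality argument: testing the equation against the solution $\vp$ of an adjoint problem $L\vp=\psi$ with the same type of boundary conditions, one derivative can be transferred from $g$ to $\vp$ via integration by parts (the boundary term $\int_{\p\O} g\vp\cdot n$ vanishes since $\vp\cdot n=0$ on $\p\O$), and the estimate of (1) applied to $\vp$ with data $\psi$ ranging over $W^{-k,q'}$ then yields the required bound $\|u\|_{W^{k+1,q}}\leq C(\|g\|_{W^{k,q}}+\|u\|_{L^q})$. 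Alternatively, one can approximate $g$ by smooth $g_\ve$, apply (1) to the resulting solutions, and pass to the limit while tracking constants.

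The main obstacle is not the $L^q$ theory itself, but the careful verification of the complementing condition for these mixed boundary operators and, in part (2), justifying the duality argument on a curved boundary while keeping the curvature $\kappa$ and friction coefficient $\vartheta$ (which enter only through lower-order terms) under control.
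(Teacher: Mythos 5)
The paper states Lemma \ref{zhle} as standard with only a citation to \cite{adn} and gives no proof, so the comparison is against the ADN route the citation invokes; for part (1) your plan follows it correctly: ellipticity of the Lam\'{e} operator (principal symbol with determinant $\mu(\lambda+2\mu)|\xi|^4\ne0$, using \eqref{h3}) together with verification of the Lopatinski--Shapiro complementing condition for the boundary pair $(u\cdot n,\,\curl u)$ --- the friction term $2(\kappa-\vartheta)u\cdot\omega$ being lower order --- reduce (1) to the systems estimates of \cite{adn}.

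For part (2), however, both arguments you sketch fall one derivative short. As written, the duality argument pairs $u$ (not $\nabla u$) against $\psi\in L^{q'}$ (the $k=0$ case); solving the adjoint problem and applying (1) to $\vp$ gives $\|\vp\|_{W^{2,q'}}\lesssim\|\psi\|_{L^{q'}}$, and transferring one derivative from $g$ to $\div\vp$ then yields $|\int u\cdot\psi|\lesssim\|g\|_{L^q}\|\psi\|_{L^{q'}}$, i.e.\ $\|u\|_{L^q}\lesssim\|g\|_{L^q}$, which is $W^{k,q}$ control rather than the claimed $W^{k+1,q}$. To get the gradient you would need to pair $\nabla u$ against test fields, but then the adjoint datum lands in $W^{-1,q'}$ and the estimate of (1) as stated no longer applies --- that missing step is exactly the negative-norm estimate you are trying to prove, so the argument is circular. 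The approximation alternative has the symmetric defect: applying (1) to $f_\ve=\nabla g_\ve$ gives $\|u_\ve\|_{W^{k+2,q}}\lesssim\|g_\ve\|_{W^{k+1,q}}+\|u_\ve\|_{L^q}$, whose right-hand side is not controlled by $\|g\|_{W^{k,q}}$, so the constants blow up as $\ve\to0$. Part (2) is in fact the negative-norm (divergence-form data) version of the ADN estimates; it is a separate statement in \cite{adn}, obtained inside the proof machinery (local flattening, explicit half-space Green's representations for data in divergence form), not as an elementary duality corollary of (1).
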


The following conclusion is given in \cite{vww,CANEHS,dm}.
\begin{lemma}   \la{xzle}
Let $1<q<+\infty,$ $\Omega$ is a bounded domain in $\r^2$ with Lipschitz boundary $\partial\Omega.$ For $v\in W^{1,q}$, if $\Omega$ is simply connected and $v\cdot n=0$ on $\partial\Omega$, then it holds that
$$\|\nabla v\|_{L^q}\leq C(\|\div v\|_{L^q}+\|\curl v\|_{L^q}).$$
\end{lemma}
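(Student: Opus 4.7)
The plan is to establish the inequality by the classical Hodge-type decomposition, reducing the problem to standard elliptic $L^q$ estimates for the Laplacian with Dirichlet or Neumann data. Given $v\in W^{1,q}$ with $v\cdot n=0$ on $\partial\Omega$, set $d=\div v$ and $\omega=\curl v$. Since $\int_\Omega d\,dx=\int_{\partial\Omega}v\cdot n\,dS=0$, the Neumann problem
\[
\Delta\phi=d \text{ in }\Omega,\qquad \partial_n\phi=0 \text{ on }\partial\Omega
\]
is solvable, and the Dirichlet problem
\[
\Delta\psi=-\omega \text{ in }\Omega,\qquad \psi=0 \text{ on }\partial\Omega
\]
always has a unique solution. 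Define $w\triangleq\na\phi+\na^\perp\psi$. A direct computation gives $\div w=\Delta\phi=d$ and $\curl w=-\Delta\psi=\omega$, while $\partial_n\phi=0$ and the tangency of $\na^\perp\psi$ (which follows from $\psi\equiv 0$ on $\partial\Omega$, so $\na\psi$ is normal and $\na^\perp\psi$ is tangential) give $w\cdot n=0$.

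Next I would show $v=w$. The difference $h\triangleq v-w$ satisfies $\div h=0$, $\curl h=0$, and $h\cdot n=0$ on $\partial\Omega$. In a simply connected planar domain the curl-free condition lets me write $h=\na\eta$ for some scalar $\eta$, whence $\Delta\eta=0$ and $\partial_n\eta=0$; elliptic uniqueness forces $\eta$ to be constant and hence $h\equiv 0$. This step is precisely where simple connectedness enters the argument — without it, the space of harmonic Neumann fields has dimension equal to the first Betti number of $\Omega$ and the claim fails.

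With $v=w$ secured, the inequality reduces to standard Calder\'on--Zygmund type bounds:
\[
\|\na^2\phi\|_{L^q}\le C\|d\|_{L^q},\qquad \|\na^2\psi\|_{L^q}\le C\|\omega\|_{L^q},
\]
where the first follows from the $L^q$ theory for the Neumann Laplacian (working modulo the additive constant, since only $\na\phi$ appears), and the second from the $L^q$ theory for the Dirichlet Laplacian. Both are consequences of the Agmon--Douglis--Nirenberg estimates already invoked in Lemma \ref{zhle}. Combining these gives
\[
\|\na v\|_{L^q}=\|\na w\|_{L^q}\le \|\na^2\phi\|_{L^q}+\|\na^2\psi\|_{L^q}\le C\bigl(\|\div v\|_{L^q}+\|\curl v\|_{L^q}\bigr).
\]

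The main obstacle I anticipate is the rigorous deployment of the $L^q$ Calder\'on--Zygmund estimate for the Neumann problem on a merely Lipschitz domain, together with handling the normalization of $\phi$ so that a lower-order term like $\|\phi\|_{L^q}$ does not appear on the right-hand side; this is typically absorbed by restricting to the subspace orthogonal to constants and using the compactness argument of Ne\v{c}as. The simple-connectedness step, although conceptually the heart of the matter, is straightforward once the potential $\eta$ is constructed.
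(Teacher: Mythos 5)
The paper does not give a proof of this lemma at all — it is quoted directly from the literature, with the remark ``The following conclusion is given in \cite{vww,CANEHS,dm}.'' So there is no in-paper argument to compare against. Your Hodge-type decomposition $v=\nabla\phi+\nabla^\perp\psi$, with $\phi$ solving a Neumann problem for $\div v$ and $\psi$ a Dirichlet problem for $-\curl v$, followed by the uniqueness argument via the curl-free potential $\eta$, is the standard route and is correct as written when $\partial\Omega$ is sufficiently smooth (say $C^{1,1}$ or $C^2$): the algebra $\div w=\Delta\phi$, $\curl w=-\Delta\psi$, $w\cdot n=0$ is right, the use of simple connectedness to identify $v$ with $w$ is in the right place, and the reduction to $\|\nabla^2\phi\|_{L^q}+\|\nabla^2\psi\|_{L^q}$ closes the estimate.

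There is, however, a genuine gap with respect to the lemma \emph{as stated}: it asserts the bound for a merely Lipschitz boundary and \emph{all} $1<q<\infty$, while your reduction hinges on Calder\'on–Zygmund $W^{2,q}$ regularity for the Dirichlet and Neumann Laplacians. On a Lipschitz domain such estimates are available only in a restricted range of exponents (the Jerison–Kenig phenomenon), so your argument does not establish the claim for arbitrary $q$ on a Lipschitz domain. You flag this as the ``main obstacle'' but frame it as a normalization/compactness technicality; it is in fact structural, and it is precisely why the paper cites Mitrea's integral-equation/layer-potential treatment of planar div–curl systems in nonsmooth domains in addition to von Wahl's smooth-domain result. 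For the purposes of the paper this is harmless, because $\Omega$ is assumed $C^{\infty}$ throughout and the Lipschitz wording in the lemma statement is simply inherited loosely from the sources, but a self-contained proof of the lemma as literally stated would need either the potential-theoretic machinery for Lipschitz boundaries or a restriction of the admissible range of $q$.
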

\begin{remark}
When it comes to an exterior domain, the conclusion of Lemma \ref{xzle} is no longer true  since the first Betti number does not vanish. In fact, in \cite{ILN}, the authors proved that there exists a unique classical solution of the problem
\be\nonumber\begin{cases}
 \div v=0, \,\,\text{in}\,\,\, \Omega^c\\ \curl u=0 \,\,\text{in}\,\,\, \Omega^c,\\
v\cdot n=0 \,\,\text{on}\,\,\, \partial\Omega,\\ |v|\rightarrow 0 \,\,\text{as}\,\,\, |x|\rightarrow \infty,\\ \int_{\partial\Omega} v ds=1.
\end{cases}\ee
where $\Omega^c$ is the complementary set of a simply connected bounded domain $\Omega$ in $\r^2$.
\end{remark}

Finally, the following Beale-Kato-Majda type inequality, which was first proved in \cite{bkm,kato} when $\div u\equiv 0,$ and improved in \cite{hlx}, we give a similar result with respect to slip boundary condition to estimate $\|\nabla u\|_{L^\infty}$ and $\|\nabla\rho\|_{L^q}(q\geq2).$
\begin{lemma}   \la{le9}  For $2<q<\infty,$ assume that $u\cdot n=0$ and $\curl u=2(\kappa-\vartheta)u\cdot\omega,\,\,\,\nabla u\in W^{1,q},$ there is a
constant  $C=C(q)$ such that  the following estimate holds
\bnn \la{ww7}\ba
\|\na u\|_{L^\infty}\le C\left(\|{\rm div}u\|_{L^\infty}+\|\curl u\|_{L^\infty} \right)\log(e+\|\na^2u\|_{L^q})+C\|\na u\|_{L^2} +C . \ea\enn
\end{lemma}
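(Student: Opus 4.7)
The plan is to adapt the strategy of \cite{hlx} (which handles the interior / no-slip case) to the slip boundary setting, by combining a BMO bound on $\nabla u$ with a two-dimensional logarithmic Sobolev (Brezis--Wainger type) inequality. The overall target inequality has the standard BKM shape $\|\nabla u\|_{L^\infty}\lesssim (\|\div u\|_{L^\infty}+\|\curl u\|_{L^\infty})\log(e+\|\nabla^2 u\|_{L^q})+\|\nabla u\|_{L^2}+1$, so the content is to pass from $L^\infty$ control of $(\div u,\curl u)$ to a $\mathrm{BMO}$ control of $\nabla u$, under the slip boundary condition $u\cdot n=0$, $\curl u=2(\kappa-\vartheta)u\cdot\omega$.

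The first step is to apply the Brezis--Wainger inequality in 2D to the tensor $\nabla u$: for $q>2$,
\[
\|\nabla u\|_{L^\infty}\le C\bigl(1+\|\nabla u\|_{\mathrm{BMO}}\,\log(e+\|\nabla u\|_{W^{1,q}})\bigr)+C\|\nabla u\|_{L^2}.
\]
Since $\|\nabla u\|_{W^{1,q}}\le C(\|\nabla u\|_{L^q}+\|\nabla^2 u\|_{L^q})$ and, by Lemma \ref{xzle}, $\|\nabla u\|_{L^q}\le C(\|\div u\|_{L^q}+\|\curl u\|_{L^q})\le C(\|\div u\|_{L^\infty}+\|\curl u\|_{L^\infty})$, the quantity inside the logarithm is controlled, up to harmless constants, by $\|\nabla^2 u\|_{L^q}$ plus terms already on the right-hand side. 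Thus it remains to estimate $\|\nabla u\|_{\mathrm{BMO}}$ by $\|\div u\|_{L^\infty}+\|\curl u\|_{L^\infty}$ plus lower-order terms.

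For this BMO bound I would use a Hodge-type decomposition dictated by the Lam\'e system. Solve the two auxiliary problems
\[
\mu\Delta w_1+(\lambda+\mu)\nabla\div w_1=\nabla(\div u),\qquad w_1\cdot n=0,\ \curl w_1=0\ \text{on }\partial\Omega,
\]
\[
\mu\Delta w_2+(\lambda+\mu)\nabla\div w_2=-\mu\nabla^\perp(\curl u),\qquad w_2\cdot n=0,\ \curl w_2=2(\kappa-\vartheta)u\cdot\omega\ \text{on }\partial\Omega,
\]
so that $u=w_1+w_2$ (the uniqueness coming from Lemma \ref{xzle}). Standard Calder\'on--Zygmund/elliptic theory (Lemma \ref{zhle}) together with the Gaffney inequality of Lemma \ref{xzle} then yields BMO estimates of $\nabla w_1$ and $\nabla w_2$ in terms of $\|\div u\|_{L^\infty}$, $\|\curl u\|_{L^\infty}$, and — from the inhomogeneous boundary datum of $w_2$ — a lower-order contribution $\|u\|_{L^\infty(\partial\Omega)}$, which is absorbed by $\|\nabla u\|_{L^2}$ via trace and the Gaffney inequality (using $q>2$ and the fact that $\vartheta\in H^3$, $\kappa\in C^\infty$).

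The main obstacle is precisely handling the inhomogeneous slip boundary term $2(\kappa-\vartheta)u\cdot\omega$ that appears for $\curl u$ on $\partial\Omega$: a naive CZ application forces one to control $\nabla u$ on the boundary, which would be circular. The way out is to transfer this datum to an interior source by constructing a smooth extension of $2(\kappa-\vartheta)u\cdot\omega$ into $\Omega$ whose $\mathrm{BMO}$ and $W^{1,q}$ norms are bounded by Sobolev norms of $u$ (using the smoothness of $\partial\Omega$ and $\vartheta\in H^3$), then absorbing the resulting lower-order terms into $\|\nabla u\|_{L^2}$. Once this BMO estimate is in hand, the log-Sobolev inequality of the first step closes the argument.
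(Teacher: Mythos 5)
Your approach is genuinely different from the paper's. The paper works directly with the Green matrix $G_{ij}(x,y)$ of the Lam\'{e} system (constructed by Solonnikov) and its pointwise decay estimates: writing $u^i=\sum_j Q_j^i$ with $Q_1$ driven by $\nabla\div u$, $Q_2$ by $\nabla^\perp(\curl u-2(\kappa-\vartheta)u\cdot\omega)$ (which vanishes on $\partial\Omega$), and $Q_3$ by the inhomogeneous boundary piece, it introduces a cut-off $\eta_\delta$ and estimates $\nabla Q_j$ pointwise, collecting a $\delta^{(q-2)/q}\|\nabla^2u\|_{L^q}$ term, a $(1-\ln\delta)$ term multiplying $\|\div u\|_{L^\infty}$, $\|\curl u\|_{L^\infty}$, or $\|u\|_{L^\infty}$, and a lower-order $\|\nabla u\|_{L^2}$, and then optimizes in $\delta$. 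You instead aim for a two-stage argument: a Brezis--Wainger logarithmic inequality reducing $\|\nabla u\|_{L^\infty}$ to a $\mathrm{BMO}$ bound on $\nabla u$, and then a Hodge/Lam\'{e} decomposition plus Calder\'{o}n--Zygmund theory to bound $\|\nabla u\|_{\mathrm{BMO}}$ by $\|\div u\|_{L^\infty}+\|\curl u\|_{L^\infty}$ plus lower order. Both are recognizable routes to Beale--Kato--Majda type inequalities; the paper's is more self-contained because the needed kernel estimates for the slip Lam\'{e} Green matrix are imported from Solonnikov, whereas your route delegates all the boundary difficulty to a single BMO estimate.

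That BMO estimate is precisely where your proposal has a genuine gap. In the whole space, $\|\nabla u\|_{\mathrm{BMO}}\lesssim\|\div u\|_{L^\infty}+\|\curl u\|_{L^\infty}$ follows because Riesz transforms map $L^\infty$ to $\mathrm{BMO}$, but in a bounded domain this requires an $L^\infty\to\mathrm{BMO}$ (or $\mathrm{BMO}\to\mathrm{BMO}$) theory for the Lam\'{e} operator \emph{with the slip boundary condition} $u\cdot n=0$, $\curl u=2(\kappa-\vartheta)u\cdot\omega$ --- this is not a corollary of Lemma~\ref{zhle} (which gives $W^{k,q}$ estimates for $1<q<\infty$, not end-point BMO estimates), and you do not cite or prove it. Your proposal also contains a coefficient slip (the source for $w_1$ should be $(\lambda+2\mu)\nabla\div u$, not $\nabla\div u$, to make $w_1+w_2=u$), which is easily fixed, but the BMO boundary estimate is the substance of the lemma and the last paragraph of your proposal --- "transfer this datum to an interior source by constructing a smooth extension ... whose $\mathrm{BMO}$ and $W^{1,q}$ norms are bounded" --- is an assertion, not an argument. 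Until that extension is explicitly constructed and the boundary CZ estimate is either proved or properly referenced, the argument does not close. The paper sidesteps this entirely by working with the explicit Green kernel, where the inhomogeneous boundary term $2(\kappa-\vartheta)u\cdot\omega$ contributes $(1-\ln\delta)\|u\|_{L^\infty}$, which is absorbed into $\|\nabla u\|_{L^2}$ and the $\div$/$\curl$ terms via Gagliardo--Nirenberg and Lemma~\ref{xzle}.
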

\begin{proof}
It follows from \cite{soln1} and \cite{soln2} that $u$ can be represented in the form
\bnn\ba u^{i}&=\int G_{i,\cdot}(x,y)\cdot(\mu\Delta_y u+(\lambda+\mu)\nabla_y\div_y u)dy\\
 &\triangleq\int G_{ij}(x,y)(\mu\Delta_y u^j+(\lambda+\mu)\partial_{y^j}\div_y u)(y)dy,\ea\enn
where, $G=\{G_{ij}\}$ with $G_{ij}=G_{ij}(x,y)\in C^{\infty}(\Omega\times\Omega\backslash D)$, $D\equiv\{(x,y)\in\Omega\times\Omega:\,x=y\}$, is Green matrix of the Lam\'{e}'s system
\eqref{cxtj1} which satisfies that for every multi-indexes $\alpha=(\alpha_1,\alpha_2)$ and $\beta=(\beta_1,\beta_2)$, there is a constant $C_{\alpha,\beta}$ such that for all
$(x,y)\in\Omega\times\Omega\backslash D,$ and $i,j=1,2,$
$$|\partial_{x}^{\alpha}\partial_{y}^{\beta}G_{ij}(x,y)|\leq C_{\alpha,\beta}|x-y|^{-|\alpha|-|\beta|},$$
where $|\alpha|=\alpha_1+\alpha_2$ and $|\beta|=\beta_1+\beta_2$.

Therefore,
\be\la{zhbc1}\ba
u^{i}(x)&=(\lambda+2\mu)\int G_{i,\cdot}(x,y)\cdot\nabla_y\div u(y)dy\\&\quad-\mu\int G_{i,\cdot}(x,y)\cdot\nabla_y^\perp(\curl u(y)-2(\kappa-\vartheta)u(y)\cdot\omega)dy\\
&\quad-\mu\int  G_{i,\cdot}(x,y)\cdot\nabla^{\perp}_y(2(\kappa-\vartheta) u(y)\cdot \omega) dy\\
&\triangleq\sum_{j=1}^{3}Q_j^i,
\ea\ee
since $\Delta u=\nabla\div u-\nabla^\bot\curl u.$

Let's first estimate the term $\nabla Q_1$. Let $\delta\in(0,1]$ be a constant to be chosen and introduce a cut-off function $\eta_{\delta}(x)$ satisfying $\eta_{\delta}(x)=1$ for $|x|<\delta,$ $\eta_{\delta}(x)=0$ for $|x|>2\delta,$ and $|\nabla\eta_{\delta}(x)|<C\delta^{-1}.$ Then $\nabla Q_1^i$ can be rewritten as
\be\la{zhbc2}\ba
\nabla Q_{1}^i&=(\lambda+2\mu)\int\eta_{\delta}(|x-y|)\,\nabla_x G_{i,\cdot}(x,y)\cdot\nabla_y\div u(y)dy\\
&\quad+(\lambda+2\mu)\int\nabla_y\eta_{\delta}(|x-y|)\cdot\nabla_x G_{i,\cdot}(x,y)\,\div_y u(y)dy \\
&\quad-(\lambda+2\mu)\int(1-\eta_{\delta}(|x-y|))\,\nabla_x\div_y G_{i,\cdot}(x,y)\,\div u(y)dy \\
&\triangleq(\lambda+2\mu)\sum_{k=1}^{3}\tilde{I}_k.
\ea\ee
due to $G_{i,\cdot}(x,y)\cdot n=0$ on $\partial\Omega$.

Now we estimate $\tilde{I}_k,\,\,k=1,2,3.$
\be\la{zhbc3}\ba
|\tilde{I}_1|&\leq C\|\eta_{\delta}(|x-y|)\nabla_x G_{i,\cdot}(x,y)\|_{L^{q/(q-1)}}\|\nabla^{2}u\|_{L^q}\\
&\leq C\left(\int_0^{2\delta}r^{-q/(q-1)}rdr\right)^{(q-1)/q}\|\nabla^{2}u\|_{L^q}\\
&\leq C\delta^{(q-2)/q}\|\nabla^{2}u\|_{L^q},
\ea\ee
\be\la{zhbc4}\ba
|\tilde{I}_2|&=\left|\int\nabla\eta_{\delta}(|x-y|)\cdot\nabla_x G_{i,\cdot}(x,y)\div u(y)dy\right|\\
&\leq C\int|\nabla\eta_{\delta}(y)\cdot\nabla_x G_{i,\cdot}(x,y)|dy\|\div u\|_{L^{\infty}}\\
&\leq C\int_\delta^{2\delta}\delta^{-1}r^{-1}rdr\|\div u\|_{L^{\infty}}\\
&\leq C\|\div u\|_{L^{\infty}},
\ea\ee
\be\la{zhbc5}\ba
|\tilde{I}_3|&=\left|\int(1-\eta_{\delta}(|x-y|))\nabla_x\div G_{i,\cdot}(x,y)\div u(y)dy\right|\\
&\leq C\left(\int_{\delta\leq|x-y|\leq1}+\int_{|x-y|>1}\right)|\nabla_x\div_y G_{i,\cdot}(x,y)||\div u(y)|dy\\
&\leq C\int_\delta^{1}r^{-2}rdr\|\div u\|_{L^{\infty}}+C\left(\int_1^{\infty}r^{-4}rdr\right)^{\frac{1}{2}}\|\div u\|_{L^2}\\
&\leq -C\ln\delta\|\div u\|_{L^{\infty}}+C\|\nabla u\|_{L^2}.
\ea\ee
Hence,
\be\la{zhbc61}\ba
\|\nabla Q_1\|_{L^{\infty}}\leq C\left(\delta^{(q-2)/q}\|\nabla^{2}u\|_{L^q}+(1-\ln\delta)\|\div u\|_{L^{\infty}}+\|\nabla u\|_{L^2}\right).
\ea\ee
Since $\curl u=2(\kappa-\vartheta)u\cdot\omega$ on $\partial\Omega$, for $\nabla Q_2^i$, we have
\bnn\ba
\nabla Q_2^i&=-\mu\int\eta_{\delta}(|x-y|)\,\nabla_x G_{i,\cdot}(x,y)\cdot\nabla_y^\perp(\curl u(y)-2(\kappa-\vartheta)u(y)\cdot\omega)dy\\
&\quad-\mu\int\nabla^\perp\eta_{\delta}(|x-y|)\cdot\nabla_x G_{i,\cdot}(x,y)(\curl u(y)-2(\kappa-\vartheta)u(y)\cdot\omega)dy \\
&\quad-\mu\int(1-\eta_{\delta}(|x-y|))\,\nabla_x \curl_y G_{i,\cdot}(x,y)(\curl u(y)-2(\kappa-\vartheta)u(y)\cdot\omega)dy. \\
\ea\enn
A similar calculation leads to
\be\la{zhbc62}\ba
\|\nabla Q_2\|_{L^{\infty}}\leq C\left(\delta^{(q-2)/q}\|\nabla^{2}u\|_{L^q}+(1-\ln\delta)\|\curl u\|_{L^{\infty}}+\|\nabla u\|_{L^2}\right),
\ea\ee
and
\be\la{zhbc64}\ba
\|\nabla Q_3\|_{L^{\infty}}\leq C\left(\delta^{(q-2)/q}\|\nabla^{2}u\|_{L^q}+(1-\ln\delta)\| \|u\|_{L^{\infty}}+\| \nabla u\|_{L^2}\right),
\ea\ee
It follows from \eqref{zhbc61}, \eqref{zhbc62}, \eqref{zhbc64} and Lemma \ref{xzle} that
\be\la{zhbc6}\ba
\|\nabla u\|_{L^{\infty}}\leq C\left(\delta^{(q-2)/q}\|\nabla^{2}u\|_{L^q}+(1-\ln\delta)(\|\div u\|_{L^{\infty}}+\|\curl u\|_{L^{\infty}})+\|\nabla u\|_{L^2}\right).
\ea\ee
Set $\delta=\min\{1,\|\nabla^{2}u\|_{L^q}^{-q/(q-2)}\}.$  Then \eqref{zhbc6} becomes
\bnn\ba
\|\nabla u\|_{L^{\infty}}\leq C\left(1+\ln(e+\|\nabla^{2}u\|_{L^q})(\|\div u\|_{L^{\infty}}+\|\curl u\|_{L^{\infty}})+\|\nabla u\|_{L^2}\right).
\ea\enn
This completes the proof.
\end{proof}
\subsection{Estimates for $F$, $\curl u$ and $\nabla u$}
Set $F=(\lambda+2\mu)\text{div}u-(P-\bar{P})$, which is called the viscous flux, it  plays an important role in our following analysis. For $F$, $\curl u$ and $\nabla u$, we give the following conclusions, which will be used frequently later.
\begin{lemma} \la{le3}
  Let $(\n,u)$ be a smooth solution of
   \eqref{a1} with slip boundary condition \eqref{ch1},
    then for   $2\leq p<+\infty$ there exists a positive constant $C$ depending only on $p,\,\,q,\,\,\mu,\,\,\lambda,$ and $\Omega$ such that
\be\la{tdu1}
\|\nabla u\|_{L^p}\le C(\|\div u\|_{L^p}+\|\curl u\|_{L^p}),\\
\ee
\be\la{hh19}\ba
\|{\curl u}\|_{L^p}\leq C(\|\rho\dot{u}\|_{L^2}+\|\nabla u\|_{L^2}),\ea\ee
\be\la{h19}\ba
\|{\nabla F}\|_{L^p} +\|{\nabla \curl u}\|_{L^p}\leq C(\|\rho\dot{u}\|_{L^p}+ \|\nabla u\|_{L^p}),\ea\ee
\be\la{h20}\ba
\|F\|_{L^p}&\le C\|\rho\dot{u}\|_{L^2}^{1-\frac{2}{p}}(\|\nabla u\|_{L^2} +\|P-\bar{P}\|_{L^2})^{\frac{2}{p}}+C(\|\nabla u\|_{L^2}+\|P-\bar{P}\|_{L^2}),
\ea\ee
\be\la{h18}\ba
\|\nabla u\|_{L^p}&\le C\|\rho\dot{u}\|_{L^2}^{1-\frac{2}{p}}(\|\nabla u\|_{L^2}
+\|P-\bar{P}\|_{L^2})^{\frac{2}{p}}+C(\|\nabla u\|_{L^2}+\|P-\bar{P}\|_{L^p}).
\ea\ee
Moreover,
\be\la{hh20}\ba
\|F\|_{L^p}\le C(\|\rho\dot{u}\|_{L^2}+\|P-\bar{P}\|_{L^2}+\|\nabla u\|_{L^2}),
\ea\ee
\be\la{tb101}\ba
\|\nabla^{2} u\|_{L^p}&\le C(\|\rho\dot{u}\|_{L^p}+\|P-\bar{P}\|_{W^{1,p}}+\|\nabla u\|_{L^2} ),
\ea\ee
\be\la{tb102}\ba
\|\nabla^{3} u\|_{L^p}&\le C(\|\rho\dot{u}\|_{W^{1,p}}+\|P-\bar{P}\|_{W^{2,p}}+\|\nabla u\|_{L^2}).
\ea\ee
\end{lemma}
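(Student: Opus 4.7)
The entire set of inequalities is driven by the Helmholtz--Weyl decomposition
$$\rho\dot u=\nabla F-\mu\,\nabla^{\perp}\curl u$$
together with Lemma \ref{xzle}. Inequality \eqref{tdu1} is an immediate consequence of Lemma \ref{xzle}, since $u\cdot n=0$ on $\partial\Omega$ and $\Omega$ is simply connected.

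For \eqref{h19}, I take the curl of the decomposition, which gives the Poisson--Dirichlet problem
$$\mu\Delta(\curl u)=\curl(\rho\dot u)\text{ in }\Omega,\qquad \curl u=2(\kappa-\vartheta)u\cdot\omega\text{ on }\partial\Omega.$$
Writing $\curl(\rho\dot u)=-\div(J(\rho\dot u))$ with $J(v^1,v^2)=(v^2,-v^1)$, standard $L^p$ elliptic estimates applied to this Dirichlet problem yield
$$\|\nabla\curl u\|_{L^p}\le C\bigl(\|\rho\dot u\|_{L^p}+\|2(\kappa-\vartheta)u\cdot\omega\|_{W^{1-1/p,p}(\partial\Omega)}\bigr),$$
and the trace theorem together with the Poincar\'e-type bound $\|u\|_{L^p}\le C\|\nabla u\|_{L^p}$ (valid because $u\cdot n=0$ on the boundary of a bounded smooth domain and $\kappa,\vartheta,\omega$ are smooth and fixed) reduce the right-hand side to $C(\|\rho\dot u\|_{L^p}+\|\nabla u\|_{L^p})$. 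Since $\nabla F=\rho\dot u+\mu\nabla^{\perp}\curl u$, the bound on $\|\nabla F\|_{L^p}$ follows at once.

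Estimate \eqref{hh19} then follows by specializing \eqref{h19} to $p=2$ and using the 2D Sobolev embedding $H^1\hookrightarrow L^p$ for all $p<\infty$, along with $\|\curl u\|_{L^2}\le C\|\nabla u\|_{L^2}$. For \eqref{h20} and \eqref{hh20} I apply the Gagliardo--Nirenberg inequality \eqref{g1} of Lemma \ref{l1} to $F$, combined with the elementary bound $\|F\|_{L^2}\le C(\|\nabla u\|_{L^2}+\|P-\bar P\|_{L^2})$ (from the definition $F=(2\mu+\lambda)\div u-(P-\bar P)$) and with the $L^2$ case of \eqref{h19} to control $\|\nabla F\|_{L^2}$. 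Inequality \eqref{h18} is then deduced from \eqref{tdu1}, the identity $(2\mu+\lambda)\div u=F+P-\bar P$, and an analogous Gagliardo--Nirenberg interpolation for $\curl u$ via \eqref{hh19}. Finally, \eqref{tb101} and \eqref{tb102} follow by applying Lemma \ref{zhle} to the Lam\'e equation $-\mu\Delta u-(\mu+\lambda)\nabla\div u=-\nabla P+\rho\dot u$ with $k=0$ and $k=1$, respectively, and absorbing the resulting lower-order $\|u\|_{L^p}$ and $\|u\|_{W^{1,p}}$ terms via Gagliardo--Nirenberg.

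The principal obstacle is the $L^p$ elliptic estimate underlying \eqref{h19}: the Dirichlet datum $2(\kappa-\vartheta)u\cdot\omega$ for $\curl u$ depends on $u$ itself, so it must be lifted in the correct fractional trace space $W^{1-1/p,p}(\partial\Omega)$ and the resulting lower-order term in $u$ absorbed without introducing quantities beyond those already present on the right-hand side. Once this is handled uniformly in $p$, the remaining inequalities reduce to standard interpolation and elliptic regularity.
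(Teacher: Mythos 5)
Your proposal follows the same route as the paper: \eqref{tdu1} from Lemma~\ref{xzle}; \eqref{h19} from the Poisson--Dirichlet problem for $\curl u$ plus the decomposition $\nabla F=\rho\dot u+\mu\nabla^\perp\curl u$; \eqref{hh19}, \eqref{h20}, \eqref{hh20}, \eqref{h18} by Gagliardo--Nirenberg interpolation (using $\bar F=0$) and \eqref{tdu1}; and \eqref{tb101}--\eqref{tb102} from Lemma~\ref{zhle} for the Lam\'e system. Your write-up merely adds detail on the fractional-trace lift of the boundary datum in \eqref{h19} (the paper states \eqref{hz1} directly), and aside from an inconsequential sign typo in the Lam\'e right-hand side it matches the paper's argument.
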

\begin{proof}
Since $u\cdot n=0$ on $\partial\Omega$ and $\Omega$ is simply connected, we can easily get \eqref{tdu1} from Lemma \ref{xzle} .

Note that
\bnn\begin{cases}
\mu\Delta\curl u=\curl(\rho\dot{u}),~ &x\in\Omega,\\ \curl u=2(\kappa-\vartheta)u\cdot\omega,~&x\in\partial\Omega.
\end{cases}\enn
where $\vartheta$ is given by \eqref{ch1}.

The standard $L^p$-estimate of elliptic equations shows that
\be\la{hz1}\ba
\|\curl u\|_{W^{1,p}}\leq C(\|\rho\dot{u}\|_{L^p}+\|\nabla u\|_{L^p}).
\ea\ee

So we have
\be\la{hz2}\ba
\|{\nabla F}\|_{L^p}\le C(\|\nabla^\bot\curl u\|_{L^p}+ \|\rho\dot{u}\|_{L^p})\le C(\|\rho\dot{u}\|_{L^p}+\|\nabla u\|_{L^p}),
\ea\ee
since $\nabla F=\mu\nabla^\bot\curl u+\rho\dot{u}$.

Together with \eqref{hz1} and \eqref{hz2} give \eqref{h19}. \eqref{hh19} is a straightforward result of Gagliardo-Nirenberg's inequality.
It follows from \eqref{g1} and \eqref{h19}  that
\bnn\ba
\|F\|_{L^p}&\le C\|F\|_{L^2}^{\frac{2}{p}}\|\nabla F\|_{L^2}^{1-\frac{2}{p}} \\
&\le C(\|\rho\dot{u}\|_{L^2}+\|\nabla u\|_{L^2})^{1-\frac{2}{p}}(\|\nabla u\|_{L^2}
+\|P-\bar{P}\|_{L^2})^{\frac{2}{p}}\\
&\le C \|\rho\dot{u}\|_{L^2}^{1-\frac{2}{p}}(\|\nabla u\|_{L^2}
+\|P-\bar{P}\|_{L^2})^{\frac{2}{p}}+C(\|\nabla u\|_{L^2}+\|P-\bar{P}\|_{L^2}),
\ea\enn

which also implies that \eqref{hh20} is true by Young's inequality.
\be\la{hz3}\ba
\|\curl u\|_{L^p}&\le C\|\curl u\|_{L^2}^{\frac{2}{p}}\|\nabla\curl u\|_{L^2}^{1-\frac{2}{p}}+C\|\nabla u\|_{L^2}\\
&\le C\|\nabla u\|_{L^2}^{\frac{2}{p}}\|\rho\dot{u}\|_{L^2}^{1-\frac{2}{p}}+C\|\nabla u\|_{L^2},
\ea\ee
due to \eqref{g1} and \eqref{hz1}.

Since $u\cdot n=0$ on $\partial\Omega$, we deduce from Lemma \ref{xzle}, \eqref{h20} and  \eqref{hz3} that
\bnn \ba
\|\nabla u\|_{L^p}&\le C(\|\div u\|_{L^p}+\|\curl u\|_{L^p})\\
&\leq C(\|F\|_{L^p}+\|\curl u\|_{L^p}+\|P-\bar{P}\|_{L^p}) \\
&\le C\|\rho\dot{u}\|_{L^2}^{1-\frac{2}{p}}(\|\nabla u\|_{L^2}
+\|P-\bar{P}\|_{L^2})^{\frac{2}{p}} + C(\|\nabla u\|_{L^2}+\|P-\bar{P}\|_{L^p}).
 \ea\enn

For the Lam\'{e}'s system
\be\la{lames}\begin{cases}
-\mu\Delta u-(\lambda+\mu)\nabla\div u=-\rho\dot{u}-\nabla(P-\bar{P}), \,\, &x\in\Omega, \\
u\cdot n=0\,\,and\,\,\curl u=2(\kappa-\vartheta)u\cdot\omega,\,\,&x\in\partial\Omega,
\end{cases} \ee
by Lemma \ref{zhle},  \eqref{h18} and Poincar\'{e}'s inequality, we obtain
\bnn\ba
\|\nabla^{2} u\|_{L^p}&\le C(\|\rho\dot{u}\|_{L^p}+\|P-\bar{P}\|_{W^{1,p}}+\|\nabla u\|_{L^2} ) ,
\ea\enn
and
\bnn\ba
\|\nabla^{3} u\|_{L^p}&\le C(\|\rho\dot{u}\|_{W^{1,p}}+\|P-\bar{P}\|_{W^{2,p}}+\|\nabla u\|_{L^2}).
\ea\enn
\end{proof}
\section{\la{se3} A priori estimates (I): lower order estimates}

This section contains some necessary a priori bounds for smooth solutions to the initial-boundary value problem (\ref{a1})-(\ref{ch1}) .
 It is assumed, without further mentioned it, that $\Omega$ be a bounded domain in $\r^2$ with $C^{\infty}$ boundary $\partial\Omega$, even though some results hold under weaker conditions on the regularity of the boundary of the domain. $T>0$ be a fixed time and $(\rho,u)$ be
the smooth solution to (\ref{a1})-(\ref{ch1})  on
$\Omega \times (0,T]$  with smooth initial
data $(\n_0,u_0)$ satisfying $u_0\in H^s$ for some $s\in(0,1)$ and $0\leq\rho_0\leq \hat{\rho}$.

In fact, $\omega=(-n^2,n^1)$ is the unit tangent vector of $\partial\Omega$. Since $u\cdot n=0$ on $\partial\Omega$, it is obvious that $u$ is parallel to $\omega$. Thus we have $u=(u\cdot\omega)\, \omega$.

A simple computation shows that
\be\la{bdd1}u\cdot\nabla u\cdot n=-u\cdot\nabla n\cdot u, \,\,x\in\partial\Omega,\ee due to $u\cdot n=0$ on $\partial\Omega$.
Set $\kappa=\kappa(x)\triangleq \omega\cdot\nabla n\cdot \omega $, geometrically, which is in fact the curvature at the point $x\in\partial\Omega$. So we have
\be\la{bdd2}u\cdot\nabla u\cdot n=-\kappa|u|^2=-\kappa (u\cdot\omega)^2\ee
on the boundary $\partial\Omega$.

One can extend the functions $n$ and $\kappa$ to $\Omega$ such that $n,\kappa\in C^3(\bar{\Omega})$, in the following discussion we still denote them by $n$ and $\kappa$ respectively.

First, we give the following estimates, which depend on the boundary condition $u\cdot n=0$ on $\partial\Omega$.
\begin{lemma}\la{uup1}Suppose $(\n,u)$ is a smooth solution of
   (\ref{a1}) with $u\cdot n=0$ on $\partial\Omega$. Then there exist a positive constant $C$ depending only on $p$ and $\Omega$ such that
\be\la{tb90}
\ba\|\dot{u}\|_{L^p}\le C(\|\nabla\dot{u}\|_{L^2}+\|\nabla u\|_{L^2}^2),
\ea\ee
\be\la{tb11}\ba
\|\nabla\dot{u}\|_{L^2}\le C(\|\div \dot{u}\|_{L^2}+\|\curl \dot{u}\|_{L^2}+\|\nabla u\|_{L^4}^{2}),
\ea\ee
for any $p\geq2$.
\end{lemma}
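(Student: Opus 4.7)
The key observation for both estimates is that, differentiating $u\cdot n = 0$ in $t$ gives $u_t\cdot n = 0$, while $(u\cdot\nabla u)\cdot n = -\kappa|u|^2$ on $\partial\Omega$ by \eqref{bdd2}. Hence $\dot u\cdot n = -\kappa|u|^2$ on $\partial\Omega$, so $\dot u$ fails to be tangent at the boundary with a defect controlled by $|u|^2$. Each inequality is a version of a classical bound (Sobolev embedding, or the Gaffney-Friedrichs inequality of Lemma \ref{xzle}) for tangential fields, adjusted to absorb this boundary defect.

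For \eqref{tb90}, I would first apply the 2D Sobolev embedding $H^1\hookrightarrow L^p$ to reduce matters to establishing $\|\dot u\|_{L^2}\le C(\|\nabla\dot u\|_{L^2}+\|\nabla u\|_{L^2}^2)$. The critical ingredient is the Poincar\'e-type inequality
$$\|\dot u\|_{L^2(\Omega)}\le C\bigl(\|\nabla\dot u\|_{L^2(\Omega)}+\|\dot u\cdot n\|_{L^2(\partial\Omega)}\bigr),$$
which I would prove by a compactness-contradiction argument: its failure would produce a constant field $c$ with $c\cdot n=0$ on $\partial\Omega$, forcing $c=0$ since $\Omega$ is simply connected. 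The trace inequality combined with $H^{1/2}(\partial\Omega)\hookrightarrow L^4(\partial\Omega)$ on the 1D boundary then yields $\|\dot u\cdot n\|_{L^2(\partial\Omega)}\le C\|u\|_{L^4(\partial\Omega)}^2\le C\|u\|_{H^1}^2\le C\|\nabla u\|_{L^2}^2$, where the last step uses a Poincar\'e inequality for $u$ proved by the same compactness scheme from $u\cdot n=0$ and simple connectivity.

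For \eqref{tb11}, I would use a Helmholtz-type correction $\dot u = v + \nabla\phi$, where $\phi$ solves the Neumann problem $\Delta\phi=c_0$ in $\Omega$, $\partial_n\phi=-\kappa|u|^2$ on $\partial\Omega$ (with $c_0$ fixed by the compatibility condition and $\int\phi\,dx=0$). Then $v\cdot n=0$ on $\partial\Omega$, so Lemma \ref{xzle} gives
$$\|\nabla v\|_{L^2}\le C\bigl(\|\div v\|_{L^2}+\|\curl v\|_{L^2}\bigr)\le C\bigl(\|\div\dot u\|_{L^2}+\|\curl\dot u\|_{L^2}+|c_0|\bigr).$$
Elliptic $H^2$-regularity for the Neumann problem bounds $\|\nabla^2\phi\|_{L^2}$ by $\|\Delta\phi\|_{L^2}+\|\partial_n\phi\|_{H^{1/2}(\partial\Omega)}$, and the trace $H^1(\Omega)\hookrightarrow H^{1/2}(\partial\Omega)$ together with the product estimate $\||u|^2\|_{H^1}\le C(\|u\|_{L^4}^2+\|u\|_{L^4}\|\nabla u\|_{L^4})\le C\|\nabla u\|_{L^4}^2$ converts this into the required bound by $C\|\nabla u\|_{L^4}^2$. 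The same estimate handles $|c_0|$, and summing $\|\nabla v\|_{L^2}+\|\nabla^2\phi\|_{L^2}$ yields \eqref{tb11}.

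The main obstacle will be the non-vanishing boundary value $\dot u\cdot n=-\kappa|u|^2$, which precludes direct appeal to the standard Gaffney-Friedrichs or Poincar\'e-type inequalities for tangential fields. Extracting the exact dependencies $\|\nabla u\|_{L^2}^2$ in \eqref{tb90} versus $\|\nabla u\|_{L^4}^2$ in \eqref{tb11} requires choosing trace, Sobolev, and product estimates at precisely the right order: the $L^2$-norm of the defect suffices for \eqref{tb90}, but the $H^{1/2}$-norm needed for the elliptic estimate forces the $L^4$-based bound in \eqref{tb11}.
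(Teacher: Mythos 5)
Your proof is correct, but it follows a genuinely different route from the paper's. Where you correct $\dot u$ by solving an auxiliary Neumann problem and invoking $H^2$ elliptic regularity (and, for \eqref{tb90}, prove a trace-augmented Poincar\'e inequality $\|\dot u\|_{L^2}\le C(\|\nabla\dot u\|_{L^2}+\|\dot u\cdot n\|_{L^2(\partial\Omega)})$ via compactness), the paper constructs an \emph{explicit algebraic} corrector: setting $u^{\perp}=(-u^{2},u^{1})$, one has on $\partial\Omega$ the identity $\dot{u}\cdot n=-\kappa|u|^{2}=\kappa|u|\,u^{\perp}\cdot n$, so the modified field $w:=\dot{u}-\kappa|u|u^{\perp}$ satisfies $w\cdot n=0$ on $\partial\Omega$. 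With that, the standard Poincar\'e inequality and the Gaffney--Friedrichs bound (Lemma \ref{xzle}) apply directly to $w$, and the remainder $\kappa|u|u^{\perp}$ contributes $\|\nabla u\|_{L^{2}}^{2}$ (in $L^{3/2}$, then Sobolev) for \eqref{tb90} and $\|\nabla u\|_{L^{4}}^{2}$ for \eqref{tb11} by Leibniz and H\"older. Your scheme buys generality (it works for any prescribed normal defect, not just one of the special product form $\dot u\cdot n=\kappa|u|\,u^{\perp}\cdot n$) at the cost of needing elliptic regularity for the Neumann problem and a compactness step; the paper's explicit substitution keeps the argument entirely at the level of H\"older, Sobolev, and the two lemmas already in hand, which is tidier here. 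Both deliver the asserted dependence on $\|\nabla u\|_{L^2}^2$ versus $\|\nabla u\|_{L^4}^2$ for the same reason: the corrector is quadratic in $u$, and the $H^1$-level control demanded in \eqref{tb11} (whether through $\|\nabla(\kappa|u|u^\perp)\|_{L^2}$ or through $\|\,|u|^2\|_{H^1}$) naturally lands at $L^4$.
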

\begin{proof}
Set $u^{\perp}=(-u^2,\,u^1).$ Notice that $\dot{u}\cdot n=-\kappa|u|^{2}=\kappa|u|u^{\perp}\cdot n$ on $\partial\Omega$, then $(\dot{u}-\kappa|u|u^{\perp})\cdot n=0$ on $\partial\Omega$. As a result, Poincar\'{e}'s inequality still holds for $\dot{u}-\kappa|u|u^{\perp},$ that is,
$$\|\dot{u}-\kappa|u|u^{\perp}\|_{L^\frac{3}{2}}\le C\|\nabla(\dot{u}-\kappa|u|u^{\perp})\|_{L^\frac{3}{2}},$$
which leads to
\be\la{tb9}
\ba\|\dot{u}\|_{L^\frac{3}{2}}\le C(\|\nabla\dot{u}\|_{L^\frac{3}{2}}+\|\nabla u\|_{L^2}^{2}).
\ea\ee

On the other hand, by Sobolev's embedding theorem and \eqref{tb9}, one has
$$\|\dot{u}\|_{L^2}\leq C(\|\nabla\dot{u}\|_{L^\frac{3}{2}}+\|\dot{u}\|_{L^\frac{3}{2}})\leq C(\|\nabla\dot{u}\|_{L^2}+\|\nabla u\|_{L^2}^{2}),$$
$$\|\dot{u}\|_{L^p}\le C(\|\nabla\dot{u}\|_{L^2}+\|\dot{u}\|_{L^2})\leq C(\|\nabla\dot{u}\|_{L^2}+\|\nabla u\|_{L^2}^{2}).$$

Since $(\dot{u}-\kappa|u|u^{\perp})\cdot n=0$ on $\partial\Omega$, the inequality \eqref{tdu1} is still true if we replace $u$ by $\dot{u}-\kappa|u|u^{\perp}$. Consequently, it is easy to check that
\bnn
\|\nabla\dot{u}\|_{L^2}&\leq C(\|\div \dot{u}\|_{L^2}+\|\curl \dot{u}\|_{L^2}+\|\nabla u\|_{L^4}^2).
\enn
\end{proof}

In the following discussion $C$ will stand for a generic positive constant depending only on $\mu ,\,\,  \lambda , \,\,  \ga ,\,\,  a , \,\, \hat{\rho}, \,\,s, \,\, \Omega$ and $\|u_{0}\|_{H^s}$ and use $C(\alpha)$ to emphasize that $C$ depends on $\alpha.$  Let's first consider the following standard energy estimate for $(\rho,u)$ and preliminary $L^{2}$ bounds for $\nabla u$ .




\begin{lemma}\la{le2}
 Suppose $(\rho,u)$ is a smooth solution of
 \eqref{a1}--\eqref{ch1} on $\O \times (0,T]. $
  Then there exists a positive constant
  $C$ depending only  on $\mu,$  $\lambda$ and $\Omega$ such that
\be \la{a16} \sup_{0\le t\le T}\int
\left(\rho|u|^2+G(\rho)\right)dx + \int_0^{T}\int |\nabla u|^2  dxdt\le CC_0.\ee
\end{lemma}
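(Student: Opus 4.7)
The plan is to derive the standard basic energy identity by multiplying the momentum equation $(\ref{a1})_2$ by $u$ and integrating over $\Omega$, then extracting a usable dissipation term compatible with the Navier-type slip condition. First I would handle the convective part: combining $(\rho u)_t\cdot u$ with $\text{div}(\rho u\otimes u)\cdot u$ and invoking the continuity equation gives $\frac{d}{dt}\int \tfrac12\rho|u|^2\,dx$, with no boundary contribution since $u\cdot n=0$. For the pressure term, I will establish the renormalized identity
\[
(G(\rho))_t+\text{div}(G(\rho)u)+(P(\rho)-P(\bar\rho))\,\text{div}\,u=0,
\]
obtained by multiplying continuity by $G'(\rho)$ and using the pointwise relation $\rho G'(\rho)-G(\rho)=P(\rho)-P(\bar\rho)$ (a direct consequence of \eqref{grho}). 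Since $u\cdot n=0$ kills the divergence term on integration and $\int P(\bar\rho)\,\text{div}\,u\,dx=0$, this yields $\int\nabla P\cdot u\,dx=\frac{d}{dt}\int G(\rho)\,dx$.

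The main new point compared to the no-slip case is how to treat the viscous terms $-\mu\Delta u-(\mu+\lambda)\nabla\text{div}\,u$ so that the boundary contributions are non-negative. Using the 2D identity $\Delta u=\nabla\text{div}\,u-\nabla^\perp\text{curl}\,u$, the viscous part of the equation becomes $-(2\mu+\lambda)\nabla\text{div}\,u+\mu\nabla^\perp\text{curl}\,u$. Pairing with $u$ and integrating by parts, the $\nabla\text{div}\,u$ piece produces $(2\mu+\lambda)\int(\text{div}\,u)^2\,dx$ with boundary contribution vanishing because $u\cdot n=0$. For the $\nabla^\perp\text{curl}\,u$ piece, a careful integration by parts (using $\omega=(-n^2,n^1)$ so that $u^1 n^2-u^2 n^1=-u\cdot\omega$) yields
\[
\int \nabla^\perp\text{curl}\,u\cdot u\,dx=\int |\text{curl}\,u|^2\,dx-\int_{\partial\Omega}(u\cdot\omega)\,\text{curl}\,u\,dS.
\]
Now I plug in the boundary identity $\text{curl}\,u=2(\kappa-\vartheta)u\cdot\omega$ from \eqref{ch1}, together with $|u|^2=(u\cdot\omega)^2$ on $\partial\Omega$, to convert the boundary integral into $2\int_{\partial\Omega}(\vartheta-\kappa)|u|^2\,dS$, which is non-negative exactly because of the standing hypothesis $\vartheta\geq\kappa$.

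Collecting everything produces the differential identity
\[
\frac{d}{dt}\int\Bigl(\tfrac12\rho|u|^2+G(\rho)\Bigr)dx+(2\mu+\lambda)\!\int(\text{div}\,u)^2\,dx+\mu\!\int|\text{curl}\,u|^2\,dx+2\mu\!\int_{\partial\Omega}(\vartheta-\kappa)|u|^2\,dS=0.
\]
Integrating in time from $0$ to $T$, the boundary term is discarded by sign, yielding the bound on $\sup_t\int(\rho|u|^2+G(\rho))\,dx$ and on $\int_0^T\int((\text{div}\,u)^2+|\text{curl}\,u|^2)\,dx\,dt$. To conclude the claimed control of $\int_0^T\int|\nabla u|^2\,dx\,dt$, I invoke the Gaffney–Friedrichs inequality (Lemma~\ref{xzle}), which is applicable since $\Omega$ is simply connected and $u\cdot n=0$. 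The only genuine obstacle is the boundary-term bookkeeping in the curl integration by parts and the use of the Navier relation on $\partial\Omega$; once the sign assumption $\vartheta\geq\kappa$ is exploited, the remainder of the argument is a routine combination with Lemma~\ref{xzle}.
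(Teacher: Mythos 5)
Your proposal is correct and follows essentially the same route as the paper: rewrite the momentum equation using $\Delta u=\nabla\div u-\nabla^\perp\curl u$, multiply by $u$, derive the renormalized identity for $G(\rho)$ from the continuity equation, handle the curl boundary term via the Navier relation $\curl u=2(\kappa-\vartheta)u\cdot\omega$ together with $|u|^2=(u\cdot\omega)^2$ on $\partial\Omega$ and the sign assumption $\vartheta\ge\kappa$, and then invoke Lemma~\ref{xzle} to convert the $(\div u,\curl u)$ dissipation into control of $\nabla u$. The integration-by-parts bookkeeping (including the sign of the boundary term) checks out and matches \eqref{m8}--\eqref{m9}.
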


\begin{proof}
It is easy to check that
\be\la{Pu1} \ba
 P_t+\div(Pu)+(\gamma-1)P\div u=0,
 \ea \ee
 or
 \be\la{Pu2} \ba
 P_t+\nabla P\cdot u+\gamma P\div u=0,
 \ea \ee
due to $(\ref{a1})_1$.
After a simple calculation, one has
\be\la{m0} \ba (G(\rho))_t + \div(G(\rho)u)+(P-P(\bar{\rho})) \div u=0.\ea \ee
Integrating over $\Omega$ and using the boundary condition \eqref{ch1}, we get
\be\la{m00} \ba
\left(\int G(\rho)dx\right)_t+\int(P-P(\bar{\rho})) \div udx=0.
\ea \ee

Note that $-\Delta u=-\nabla\div u+\nabla^\perp\curl u,$
we rewrite the equation of conservation of momentum $(\ref{a1})_2 $ as
\be\la{m1} \ba
\rho \dot{u} - (\lambda + 2\mu)\nabla\div u+\mu\nabla^{\perp}\curl u + \nabla P=0.
\ea \ee
Multiplying \eqref{m1} by $u$ and integrating over $\Omega$, together with \eqref{m00}, we obtain
\be\la{m8} \ba
&\left(\int(\rho |u|^{2}  +  G(\rho))dx\right)_t + (\lambda + 2\mu)\int(\div u)^{2}dx + \mu\int(\curl u)^{2}dx\\
&{-\mu\int_{\partial\Omega}2(\kappa-\vartheta)|u|^2ds\color{black}=0}.
\ea \ee
Notice that $\vartheta\geq \kappa$ on $\partial\Omega$, hence
\be\la{m9} \ba
&\sup_{0\le t\le T}\int
\left(\frac{1}{2}\n|u|^2+G(\rho)\right)dx + (\lambda + 2\mu)\int_0^{T}\int(\div u)^{2}dxdt\\
&\quad + \mu\int_0^{T}\int(\curl u)^{2}dxdt\\
&\leq C_0,
\ea \ee
along with \eqref{tdu1}, which yields \eqref{a16}.
\end{proof}
Define $$\sigma=\sigma(t)\triangleq\min\{1,t \},$$
 \be\la{As1}
  A_1(T) \triangleq \sup_{   0\le t\le T  }\left(\sigma\|\nabla u\|_{L^2}^2\right) + \int_0^{T} \sigma\int
 \n|\dot{u} |^2 dxdt,
  \ee
  and   \be \la{As2}
  A_2(T)  \triangleq\sup_{  0\le t\le T   }\sigma^2\int\n|\dot{u}|^2dx + \int_0^{T}\int
  \sigma^2|\nabla\dot{u}|^2dxdt.
\ee

Next, the main result in this section which guarantees the existence of a global classical solution of \eqref{a1}--\eqref{ch1} will be given. To obtain this proposition, we need to prove the following Lemmas \eqref{th00}--\eqref{le7} under the assumption \eqref{zz1}.
\begin{proposition}\la{pr1}  Under  the conditions of Theorem \ref{th1},
   there exists some  positive constant  $\ve$
    depending only on  $\mu$, $\lambda$, $a$, $\ga$, $\hat\rho,$ $s$, $\Omega$, and $M$  such that if
       $(\rho,u)$  is a smooth solution of
       \eqref{a1}--\eqref{ch1}  on $\Omega\times (0,T] $
        satisfying
 \be\la{zz1}
 \sup\limits_{
 \Omega\times [0,T]}\rho\le 2\hat{\rho},\quad
     A_1(T) + A_2(T) \le 2C_0^{1/3},
  \ee
 then the following estimates hold
        \be\la{zz2}
 \sup\limits_{\Omega\times [0,T]}\rho\le 7\hat{\rho}/4, \quad
     A_1(T) + A_2(T) \le  C_0^{1/3},
  \ee
   provided $C_0\le \ve.$
\end{proposition}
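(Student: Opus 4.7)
\medskip

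\noindent\textbf{Proof plan for Proposition \ref{pr1}.}
The plan is the standard Hoff/Huang--Li--Xin bootstrap: under the working assumption \eqref{zz1} one re-derives estimates on $A_1(T)$, $A_2(T)$, and $\sup\rho$ and shows that each comes out \emph{strictly} better than the ansatz as soon as $C_0$ is small enough. The argument splits into three blocks: (i) an $A_1(T)$-estimate obtained by testing the momentum equation \eqref{hod1} against $\sigma \dot u$; (ii) an $A_2(T)$-estimate obtained by applying the material derivative to \eqref{hod1} and testing against $\sigma^2\dot u$; and (iii) a pointwise upper bound on $\rho$ deduced from the mass equation via Zlotnik's Lemma \ref{le1}. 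Throughout, the effective viscous flux $F$ and the interpolation/elliptic estimates of Lemma \ref{le3} are used to convert dissipation into $L^p$-control of $\nabla u$, and the basic energy bound \eqref{a16} is used as the root of the hierarchy.

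\smallskip

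\noindent\textbf{Step 1 ($A_1$).} Rewrite $(\ref{a1})_2$ as $\rho\dot u=\nabla F-\mu\nabla^\perp\curl u$. Multiplying by $\sigma \dot u$ and integrating over $\Omega\times(0,t)$ produces, after integration by parts, a term $\frac{d}{dt}\int \sigma\bigl[(\lambda+2\mu)(\div u)^2+\mu(\curl u)^2\bigr]dx$, a good term $\int\sigma\rho|\dot u|^2\,dx$, and several remainder terms involving $P-\bar P$, $\nabla u\cdot\nabla u\cdot u$, and boundary integrals. The key boundary computation is \eqref{bdd2} together with $\curl u=2(\kappa-\vartheta)u\cdot\omega$ and $\vartheta\ge\kappa$: it produces a boundary term of favourable sign (or one controlled by $\|\vartheta-\kappa\|_{L^\infty(\partial\Omega)}\|u\|_{L^2(\partial\Omega)}^2$ which is absorbed by trace + Poincar\'e). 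The cubic nonlinearity $\int|\nabla u|^3 dx$ is dominated using \eqref{h18} with $p=3$ and interpolation; every such term carries an explicit factor of $C_0^{\alpha}$ with $\alpha>0$, using the bound $\sup\rho\le 2\hat\rho$ and \eqref{a16}. This closes $A_1(T)\le C\,C_0^{1/2}$, say.

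\smallskip

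\noindent\textbf{Step 2 ($A_2$).} Apply the operator $\partial_t+\div(u\cdot)$ to the $i$-th component of $(\ref{a1})_2$ to obtain an equation for $\dot u$ of Huang--Li--Xin type, then test against $\sigma^2 \dot u$. Here one must again control boundary terms; since $\dot u\cdot n=-\kappa|u|^2$ on $\partial\Omega$, I use Lemma \ref{uup1} to recover $\|\nabla\dot u\|_{L^2}$-coercivity from $\|\div\dot u\|_{L^2}+\|\curl\dot u\|_{L^2}$. The nonlinear terms are estimated by \eqref{h19}--\eqref{tb101} and Gagliardo--Nirenberg; each of them gives a small constant times the dissipation plus $C\,A_1(T)^{\beta}C_0^{\beta'}$. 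Combining Steps 1 and 2 with \eqref{zz1} yields $A_1(T)+A_2(T)\le C\,C_0^{1/2}\le \tfrac12 C_0^{1/3}$ as soon as $C_0\le\varepsilon$.

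\smallskip

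\noindent\textbf{Step 3 (density bound via Zlotnik).} Along a particle trajectory $X(t;x_0)$ with $X_t=u(X,t)$, the mass equation is
\begin{equation*}
\tfrac{d}{dt}\log\rho(X(t),t)=-\div u=-\tfrac{1}{\lambda+2\mu}\bigl[F+(P-\bar P)\bigr].
\end{equation*}
Setting $y(t)=\log(\rho/\hat\rho)\circ X$ gives $y'=g(y)+b'(t)$ with $g(y)=-\tfrac{P(\hat\rho e^y)-P(\bar\rho)}{\lambda+2\mu}$ (so $g(+\infty)=-\infty$) and $b'=-\tfrac{F}{\lambda+2\mu}$. To verify the Zlotnik hypothesis \eqref{a100} on $b$, I split $\int_{t_1}^{t_2}|F|\,dt$ on $[0,\sigma(T)]$ and on $[\sigma(T),T]$, using \eqref{hh20} together with the just-proved bounds on $A_1$, $A_2$ and \eqref{a16}; the first piece is $O(C_0^\alpha)$ (and serves as $N_0$), the second is linear in $t_2-t_1$ with a coefficient $N_1$ that can be made arbitrarily small thanks to the factor $C_0$. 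Picking $C_0$ small enough, \eqref{a101} gives $\hat\zeta<\log(7/8)$ and hence $\rho\le \tfrac78\,\hat\rho\cdot(8/7)\le \tfrac{7\hat\rho}{4}$ via Lemma \ref{le1}.

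\smallskip

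\noindent\textbf{Main obstacles.} The truly delicate point is \emph{bookkeeping of the boundary terms} created by the Navier slip condition: every integration by parts in Steps 1--2 spawns surface integrals on $\partial\Omega$ involving $\kappa$, $\vartheta$, $u\cdot\omega$ and the normal traces of $\dot u$ and $\nabla u$, which are not manifestly small. The assumption $\vartheta\ge\kappa$ and the identity \eqref{bdd2} are used repeatedly to produce either a sign or a traceable quantity, and the Gaffney--Friedrichs type estimate \eqref{tdu1} (which needs $\Omega$ simply connected) is what converts $\|\div\cdot\|+\|\curl\cdot\|$ control back into $\|\nabla\cdot\|$ control. The secondary obstacle is the uniform control of $\int_0^T\|F\|_{L^\infty}\,dt$ required for Zlotnik, which forces the introduction of the weight $\sigma(t)$ and the split of the time interval into $[0,\sigma(T)]$ and $[\sigma(T),T]$.
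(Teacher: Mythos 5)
Your three-step architecture (weighted $\dot u$-estimates for $A_1$ and $A_2$, then Zlotnik for the density) follows the same route as the paper, which proves the proposition by combining its Lemmas~\ref{le5} and~\ref{le7}. Steps~1 and~2, although sketchy, are consistent with the paper's Lemmas~\ref{xcrle1} and~\ref{le5}, and the nonlinear terms $\int\sigma\|\nabla u\|_{L^3}^3\,dt$ and $\int\sigma^2\|\nabla u\|_{L^4}^4\,dt$ can indeed be absorbed with small powers of $C_0$ using \eqref{h18} and the basic bounds \eqref{a16}, \eqref{th00jl}, \eqref{zz1}.

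The genuine gap is in Step~3, where you claim the Zlotnik hypothesis on $[0,\sigma(T)]$ can be verified ``using \eqref{hh20} together with the just-proved bounds on $A_1$, $A_2$ and \eqref{a16}.'' It cannot. To bound $\int_0^{\sigma(T)}\|F\|_{L^\infty}\,dt$ you must interpolate via Gagliardo--Nirenberg,
\[
\|F\|_{L^\infty}\le C\,\|F\|_{L^2}^{\theta}\,\|\nabla F\|_{L^r}^{1-\theta},\qquad \theta=\frac{r-2}{2(r-1)}\in\Bigl(0,\tfrac12\Bigr),
\]
and $\|\nabla F\|_{L^r}$ is controlled through $\|\nabla\dot u\|_{L^2}$ (via \eqref{h19} and Lemma~\ref{uup1}). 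The only temporal information you have from $A_1,A_2$ is $\sup\sigma\|\nabla u\|_{L^2}^2\le A_1$ and $\int\sigma^2\|\nabla\dot u\|_{L^2}^2\,dt\le A_2$, i.e.\ $\|\nabla u\|_{L^2}\sim t^{-1/2}$ and $\|\nabla\dot u\|_{L^2}\sim t^{-1}$ near $t=0$. Plugging these into the interpolation, the integrand behaves like $t^{\theta/2-1}(t^2\|\nabla\dot u\|_{L^2}^2)^{(1-\theta)/2}$, and H\"older against $\int t^2\|\nabla\dot u\|_{L^2}^2\,dt$ forces the companion weight $\int_0^1 t^{(\theta-2)/(1+\theta)}\,dt$, which diverges for every $\theta<1/2$. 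So the Zlotnik $N_0$ cannot even be shown finite, let alone $O(C_0^{\alpha})$, from $A_1,A_2$ alone.

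What you are missing is the whole content of the paper's Lemma~\ref{nzc1} and Lemma~\ref{zc1}: the decomposition $u=w_1+w_2$ of the velocity into the linear evolution of the data $u_0$ and the pressure-forced part, followed by Stein--Weiss interpolation to obtain the \emph{fractional}-time weighted bounds \eqref{uv1}--\eqref{uv2},
\[
\sup_{0\le t\le\sigma(T)}t^{1-s}\|\nabla u\|_{L^2}^2\le K(\hat\rho,M),\qquad
\sup_{0\le t\le\sigma(T)}t^{2-s}\|\rho^{1/2}\dot u\|_{L^2}^2+\int_0^{\sigma(T)}t^{2-s}\|\nabla\dot u\|_{L^2}^2\,dt\le K(\hat\rho,M),
\]
with a constant $K$ independent of $C_0$. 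It is exactly here that the hypothesis $u_0\in H^s$ with $s>1/2$ enters; your proposal never uses $H^s$ at all. With these weights the Zlotnik computation \eqref{xbh19} closes, giving $N_0\le C(\hat\rho,M)C_0^{1/11}$, because the exponent $-(17-5s)/22$ becomes integrable against the $t^{2}$-weighted $\dot u$-bound once $s>1/5$. Without them, the argument simply does not produce a finite $N_0$ on $[0,\sigma(T)]$. (Your $\log\rho$ reformulation of Zlotnik is cosmetically fine, though the arithmetic ``$\hat\zeta<\log(7/8)$ hence $\rho\le\tfrac78\hat\rho\cdot\tfrac87$'' is garbled; the paper also runs the lemma in two passes, first getting $\tfrac32\hat\rho$ on $[0,\sigma(T)]$ and then $\tfrac74\hat\rho$ on $[\sigma(T),T]$.)
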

\begin{proof}Proposition \ref{pr1} is an obvious consequence of the Lemmas \ref{le5} and \ref{le7}.
\end{proof}
\begin{lemma}\la{nzc1} Suppose $(\n,u)$ is a smooth solution of \eqref{a1}-\eqref{ch1}   on $\O \times (0,T] $ satisfying \eqref{dt2}. Then there
exists a positive constant  $C $   depending only on $\mu,$  $\lambda,$ $\ga,$  $a,$ $\hat\rho,$  $s,$ $\Omega$ and $M$, such
that
\be\la{xuv1} \sup_{0\le t\le  \si(T) }\int \n |u|^{2+\nu}dx\le C(\hat\rho,M ) ,\ee
for some $\nu\in(0,\frac{1}{2})$.
\end{lemma}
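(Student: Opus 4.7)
The plan is to multiply the momentum equation $(\ref{a1})_2$ by $(2+\nu)|u|^\nu u$, integrate over $\Omega$, and derive a Gr\"onwall-type inequality for $y(t):=\int\n|u|^{2+\nu}\,dx$ on the short interval $[0,\si(T)]$. For the initial value, since $s>1/2$, the 2D Sobolev embedding $H^s(\Omega)\hookrightarrow L^{2+\nu}(\Omega)$ is valid whenever $\nu<\frac{2s}{1-s}$; the admissible range contains $(0,\frac{1}{2})$, so I fix some such $\nu$. Combined with $\n_0\le\hat{\n}$ this yields
\[
y(0)\le\hat{\n}\,\|u_0\|_{L^{2+\nu}}^{2+\nu}\le C(\hat{\n},s,\Omega)\,\|u_0\|_{H^s}^{2+\nu}\le C(\hat{\n},M).
\]

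Using the continuity equation $\n_t+\div(\n u)=0$ and $u\cdot n=0$ on $\pa\Omega$, the material time-derivative portion of the multiplied momentum equation collapses exactly to $y'(t)$. Integrating the viscous terms by parts produces the positive dissipation $(2+\nu)\mu\int|u|^\nu|\na u|^2dx+(2+\nu)(\lm+\mu)\int|u|^\nu(\div u)^2dx$, additional nonnegative contributions involving $|u|^{\nu-2}|\na|u|^2|^2$, a mixed cross-term controllable by Cauchy--Schwarz and Young, and the boundary integral $\mu(2+\nu)\int_{\pa\Omega}(\pa_n u)\cdot|u|^\nu u\,ds$. For the boundary term, since $u$ is tangential on $\pa\Omega$, the Navier slip condition $(D(u)n+\vartheta u)\cdot u=0$ combined with $(u\cdot\na u)\cdot n=-\ka|u|^2$ gives $(\pa_n u)\cdot u=-(2\vartheta-\ka)|u|^2$, so the boundary contribution equals $-\mu(2+\nu)\int_{\pa\Omega}(2\vartheta-\ka)|u|^{2+\nu}ds$; applying the trace inequality to $f=|u|^{(2+\nu)/2}$ shows this is absorbable into the dissipation modulo a volume term $C\int|u|^{2+\nu}dx$. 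The pressure integral, handled via $\|P-\bar P\|_{L^\infty}\le C(\hat{\n})$, H\"older and Young, is likewise bounded by $\ve\int|u|^\nu|\na u|^2dx+C(\hat{\n})\bigl(1+\int|u|^{2+\nu}dx\bigr)$. After absorbing the $\ve$-terms into the dissipation, I arrive at
\[
y'(t)\le C(\hat{\n})\Bigl(1+\int|u|^{2+\nu}dx\Bigr).
\]

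To close the argument I bound the right-hand side using the 2D Gagliardo--Nirenberg inequality $\|u\|_{L^{2+\nu}}^{2+\nu}\le C\|u\|_{L^2}^2\|\na u\|_{L^2}^\nu+C\|u\|_{L^2}^{2+\nu}$ together with a Poincar\'e--Feireisl-type inequality $\|u\|_{L^2}\le C(\|\sqrt{\n}u\|_{L^2}+\|\na u\|_{L^2})$ valid when $\bar{\n}_0>0$ (if $\bar{\n}_0=0$ then $\n\equiv 0$ and the assertion is trivial). In conjunction with $\|\sqrt{\n}u\|_{L^2}^2\le CC_0$ from Lemma \ref{le2}, this expresses $\int|u|^{2+\nu}dx$ entirely in terms of $\|\na u\|_{L^2}$. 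The main obstacle is the time integrability of $\|\na u\|_{L^2}^{2+\nu}$ near $t=0$: here I use the interpolation $\|\na u\|_{L^2}^{2+\nu}=\|\na u\|_{L^2}^2\cdot\|\na u\|_{L^2}^\nu$ combined with $\int_0^T\|\na u\|_{L^2}^2dt\le CC_0$ from Lemma \ref{le2} and the a priori bound $\si\|\na u\|_{L^2}^2\le 2C_0^{1/3}$ from \eqref{zz1}. The restriction $\nu<\frac{1}{2}$ is precisely what makes the resulting singular weight $\si^{-\nu/2}$ integrable on $(0,\si(T)]$ and produces a bound depending only on the listed parameters. A final application of Gr\"onwall's inequality on $[0,\si(T)]$ then completes the proof.
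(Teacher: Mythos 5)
Your outline begins the same way the paper does (test $(\ref{a1})_2$ against $(2+\nu)|u|^\nu u$, use $u\cdot n=0$ for the transport part, set up a Gr\"onwall estimate on $y(t)=\int\rho|u|^{2+\nu}dx$), but there is a genuine gap in how you close the differential inequality, and it stems from how you treat the pressure term. You estimate $(P-\bar P)\div(|u|^\nu u)$ via $\|P-\bar P\|_{L^\infty}\le C(\hat\rho)$, which discards the $\rho$ in $P=a\rho^\gamma$ and leaves you with $y'\le C\big(1+\int|u|^{2+\nu}dx\big)$, an inequality whose right-hand side is not controlled by $y$. The paper instead bounds $P^2|u|^\nu=a^2\rho^{2\gamma}|u|^\nu\le C\rho|u|^{2+\nu}+C\rho^{(2+\nu)\gamma-\nu/2}$ by Young, so its right-hand side becomes $Cy+C(\hat\rho)+C\|\nabla u\|_{L^2}^2$ and Gr\"onwall closes at once using only $\int_0^T\|\nabla u\|_{L^2}^2\,dt\le CC_0$ from Lemma \ref{le2}. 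To compensate for dropping the weight you invoke a Gagliardo--Nirenberg/Poincar\'e reduction, arriving at $y'\lesssim 1+\|\nabla u\|_{L^2}^{2+\nu}$; but the time integral $\int_0^{\sigma(T)}\|\nabla u\|_{L^2}^{2+\nu}\,dt$ does \emph{not} follow from $\sigma\|\nabla u\|_{L^2}^2\le 2C_0^{1/3}$ together with $\int_0^T\|\nabla u\|_{L^2}^2\,dt\le CC_0$. Indeed $\|\nabla u\|_{L^2}^{2+\nu}\le C\sigma^{-\nu/2}\|\nabla u\|_{L^2}^2$ leaves the product $\sigma^{-\nu/2}\|\nabla u\|_{L^2}^2$ inside the integral; neither splitting the time interval nor H\"older with admissible exponents bounds $\int_0^1\sigma^{-\nu/2}\|\nabla u\|_{L^2}^2\,dt$ from the two facts above (a model profile $\|\nabla u\|_{L^2}^2=ct^{-\alpha}$ with $\alpha$ slightly below $1$ is consistent with both yet makes the weighted integral diverge for suitable $\nu<1/2$). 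Your assertion that ``$\nu<\tfrac12$ is precisely what makes $\sigma^{-\nu/2}$ integrable'' misses that the weight is multiplied by an unbounded function; integrability of $\sigma^{-\nu/2}$ alone is irrelevant.

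Two further structural remarks. First, you invoke \eqref{zz1} as an a priori bound, but Lemma \ref{nzc1} is stated (and used, e.g.\ in the proof of Lemma \ref{zc1}) \emph{outside} the bootstrap assumption \eqref{zz1}; the paper's proof requires only \eqref{a16} and $\rho\le\hat\rho$, so importing \eqref{zz1} weakens the logical standing of the lemma. Second, the actual reason for the smallness constraint on $\nu$ is different from what you propose: the paper rewrites the viscous integrals via the vector field $|u|^{\nu/2}u$ and controls the ``error'' terms $(\nabla|u|^{\nu/2}\cdot u)^2$, $(\nabla^\perp|u|^{\nu/2}\cdot u)^2$ using $|\nabla|u|^{\nu/2}\cdot u|\le\frac{\nu}{2+\nu}|\nabla(|u|^{\nu/2+1})|$; these carry the prefactor $\nu^2$ and are absorbed into the $\div/\curl$ dissipation (valid since $|u|^{\nu/2}u\cdot n=0$ on $\partial\Omega$) precisely when $\nu\le\min\{(\mu/((\lambda+2\mu)C(\Omega)))^{1/2},\,1/2\}$. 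That is an absorption constraint, not a time-integrability constraint. Your computation of the boundary contribution via $(\partial_n u)\cdot u=(\kappa-2\vartheta)|u|^2$ is correct, and your bound for the initial value $y(0)$ via $H^s\hookrightarrow L^{2+\nu}$ is fine, but the core of the differential inequality needs to be redone along the lines above.
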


\begin{proof}
Multiplying $(\ref{a1})_2$ by $(2+\nu)|u|^\nu u$, $\nu>0$ will be determined later, and integrating the resulting equation over $ \O$ lead  to
\bnn\ba & (2+\nu)\int|u|^{\nu}\rho \dot{u}\cdot u dx -(\lambda+2\mu)(2+\nu)\int|u|^{\nu}\nabla\div u\cdot u dx \\
& \quad + \mu(2 + \nu)\int|u|^{\nu}\nabla^{\perp}\curl u\cdot udx+(2+\nu)\int|u|^{\nu}u\cdot\nabla Pdx=0,\ea\enn
which, together with $(\ref{a1})_1$, gives
\be\la{xiugai1} \ba & \left(\int \n |u|^{2+\nu}dx\right)_t+(\lambda+2\mu)(2+\nu)\int\left(\div(|u|^{\frac{\nu}{2}}u)\right)^2dx\\
&\quad+ \mu(2+\nu)\int\left(\curl(|u|^{\frac{\nu}{2}}u)\right)^2dx- {\mu(2 + \nu)\int_{\partial\Omega}2(\kappa-\vartheta)|u|^{\nu+2}ds}\\
&\quad -(2+\nu)\int P\div(|u|^{\nu}u)dx \\
&=(\lambda+2\mu)(2+\nu)\int(\nabla|u|^{\frac{\nu}{2}}\cdot u)^2dx+\mu(2+\nu)\int(\nabla^{\perp}|u|^{\frac{\nu}{2}}\cdot u)^2dx.
\ea\ee
Notice that
$$|\nabla(|u|^{\frac{\nu}{2}+1})|\leq|\nabla(|u|^{\frac{\nu}{2}}u)|,\,\,|\nabla|u|^{\frac{\nu}{2}}\cdot u|\leq|\nabla|u|^{\frac{\nu}{2}}||u|=\frac{\nu}{2+\nu}|\nabla(|u|^{\frac{\nu}{2}+1})|$$
and
$$|\nabla^{\perp}|u|^{\frac{\nu}{2}}\cdot u|\leq\frac{\nu}{2+\nu}|\nabla(|u|^{\frac{\nu}{2}+1})|.$$
Therefore, by Young's inequality,
\be\la{xiugai2} \ba & \left(\int \n |u|^{2+\nu}dx\right)_t+(\lambda+2\mu)(2+\nu)\int\left(\div(|u|^{\frac{\nu}{2}}u)\right)^2dx \\
&\quad + \mu(2+\nu)\int\left(\curl(|u|^{\frac{\nu}{2}}u)\right)^2dx - {\mu(2 + \nu)\int_{\partial\Omega}2(\kappa-\vartheta)|u|^{\nu+2}ds}\\
&\leq(\lambda+2\mu)\nu^2\int|\nabla(|u|^{\frac{\nu}{2}}u)|^2dx+C\int\rho|u|^{2+\nu}dx+C\int\rho^{(2+\nu)\gamma-\frac{\nu}{2}}dx+C\int|\nabla u|^2dx\\
&\leq C(\lambda+2\mu)\nu^2\left(\int(\div(|u|^{\frac{\nu}{2}}u))^2dx+\int(\curl(|u|^{\frac{\nu}{2}}u))^2dx\right)+C\int\rho|u|^{2+\nu}dx\\
&\quad+C\int\rho^{(2+\nu)\gamma-\frac{\nu}{2}}dx+C\int|\nabla u|^2dx,
\ea\ee
where in the last inequality we have used the inequality
$$\int|\nabla(|u|^{\frac{\nu}{2}}u)|^2dx\leq C\left(\int(\div(|u|^{\frac{\nu}{2}}u))^2dx+\int(\curl(|u|^{\frac{\nu}{2}}u))^2dx\right),$$
since $u\cdot n=0$ on $\partial\Omega$.

Choosing $\nu=\min\{(\frac{\mu}{(\lambda+2\mu)C(\Omega)})^{\frac{1}{2}},\,\,\frac{1}{2}\},$ it follows from \eqref{xiugai2} that
$$\left(\int \rho |u|^{2+\nu}dx\right)_t\leq C\int\rho|u|^{2+\nu}dx+C\int\rho^{(2+\nu)\gamma-\frac{\nu}{2}}dx+C\int|\nabla u|^2dx.$$
By Gronwall's inequality and utilizing the following simple fact
\bnn\ba
& \left(\int\rho_0|u_0|^{2+\nu}dx\right)^{\frac{1}{2+\nu}}\leq C(\hat{\rho})\|\rho_0^{\frac{1}{2}}u_0\|_{L^{2}}^{\frac{2s-(1-s)\nu}{s}}\|u_0\|_{L^{\frac{2s}{1-s}}}^{\frac{\nu}{s}} \\
& \quad \leq C(\hat{\rho})C_0^{\frac{2s-(1-s)\nu}{s}}\|u_0\|_{{H}^{s}}^{\frac{\nu}{s}} ,
\ea\enn
one can derive \eqref{xuv1} immediately.
\end{proof}

Consider the problem
\bn\la{e480}\begin{cases}
{\rm div}v=f, \,\, &x\in\Omega, \\
v=0, \,\,&x\in{\partial\Omega},
\end{cases} \en
where $\Omega$ is a bounded domain in $R^{2}$ with $C^{\infty}$ boundary.

One can find the following conclusion in \cite[Theorem III.3.1]{GPG}.
\begin{lemma}\la{ll27}  There exists a linear operator $\mathcal{B} = [\mathcal{B}_1 , \mathcal{B}_2 ]$ enjoying
the properties:

1)The operator $$\mathcal{B}:\{f\in L^p(\O):\bar f =0\}\mapsto (W^{1,p}_0(\O))^2$$ is a bounded linear one, that is,
\be \|\mathcal{B}[f]\|_{W^{1,p}_0(\O)}\le C(p)\|f\|_{L^p(\O)}, \mbox{ for any }p\in (1,\infty).\ee

2) The function $v = \mathcal{B}[f]$ solve the problem \eqref{e480}.

3) if, moreover, $f$ can be written in the form $f = \div  g$ for a certain $g\in L^r(\O),$ $g\cdot n|_{\pa\O}=0,$  then
\be \|\mathcal{B}[f]\|_{L^{r}(\O)}\le C(r)\|g\|_{L^r(\O)}, \mbox{ for any }r  \in (1,\infty).\ee
\end{lemma}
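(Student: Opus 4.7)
The plan is to construct $\mathcal{B}$ explicitly via the classical Bogovskii formula, first on a star-shaped subdomain and then globally by partition of unity. Concretely, suppose $\Omega$ is star-shaped with respect to an open ball $B\subset\Omega$, fix a cut-off $\omega\in C_c^{\infty}(B)$ with $\int\omega\,dx=1$, and for $f\in L^p(\Omega)$ with $\bar f=0$ define
\be\nonumber
\mathcal{B}_i[f](x)=\int_{\Omega}f(y)\,\frac{(x-y)_i}{|x-y|^2}\int_{|x-y|}^{\infty}\omega\!\left(y+r\,\tfrac{x-y}{|x-y|}\right)r\,dr\,dy,\quad i=1,2.
\ee
The cut-off $\omega$ being compactly supported inside $\Omega$ ensures, once one extends $f$ by zero, that $\mathcal{B}[f]$ is supported away from $\partial\Omega$, so $\mathcal{B}[f]\in (W^{1,p}_0(\Omega))^{2}$.

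Next I would verify the two pointwise identities. A direct differentiation under the integral (using polar coordinates around $x$) together with the normalization $\int\omega=1$ and the hypothesis $\bar f=0$ yields $\div\mathcal{B}[f]=f$ in $\Omega$; this is the computation originally due to Bogovskii. For the $W^{1,p}$-bound I would differentiate the kernel once more, split the result into a principal-value Calder\'on--Zygmund operator plus a bounded remainder with an integrable kernel, and invoke the standard Calder\'on--Zygmund theorem on $\r^2$ to obtain
\bnn
\|\na \mathcal{B}[f]\|_{L^p(\Omega)}\le C(p)\|f\|_{L^p(\Omega)},\quad 1<p<\infty,
\enn
which, combined with Poincar\'e's inequality on the bounded set $\Omega$, gives the asserted $W^{1,p}_0$ estimate.

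For the improved estimate when $f=\div g$ with $g\cdot n|_{\partial\Omega}=0$, the idea is to integrate by parts in $y$ inside the kernel representation. The boundary integral vanishes thanks to $g\cdot n=0$, and the resulting operator acting on $g$ has a kernel of the form $K(x,y)/|x-y|$ with $K$ homogeneous of degree zero in $x-y$; this is again a Calder\'on--Zygmund kernel, so the $L^{r}\to L^{r}$ bound $\|\mathcal{B}[\div g]\|_{L^r}\le C(r)\|g\|_{L^r}$ follows for $1<r<\infty$.

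Finally, for a general smooth bounded $\Omega$, I would cover $\Omega$ by finitely many sub-domains $\Omega_k$, each star-shaped with respect to some ball, choose a smooth partition of unity $\{\chi_k\}$ subordinate to this cover, and decompose $f=\sum_k(\chi_k f-c_k)+\sum_k c_k$ with constants $c_k$ chosen so that each piece $\chi_k f-c_k$ has zero mean on $\Omega_k$; the constants $c_k$ are then absorbed by a similar construction using a fixed reference sub-domain. Applying the already-constructed local operators $\mathcal{B}_{\Omega_k}$ and summing yields a global $\mathcal{B}$ satisfying all three properties, with norms controlled by the local constants. The main obstacle I expect is step~(3), the singular-integral estimate: verifying the Calder\'on--Zygmund hypotheses (the cancellation/H\"ormander condition on the differentiated kernel) requires careful bookkeeping of the inner radial integral, and this is where most of the technical work lies; the partition-of-unity reduction, while algebraically fiddly, is essentially bookkeeping once the star-shaped case is in hand.
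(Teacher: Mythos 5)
The paper does not prove this lemma itself; it quotes it verbatim from \cite[Theorem III.3.1]{GPG}, and your sketch reproduces exactly the Bogov\-skii construction found there (explicit kernel on a star-shaped domain, Calder\'on--Zygmund estimate for the differentiated kernel, integration by parts in $y$ using $g\cdot n|_{\partial\Omega}=0$ for part (3), partition-of-unity reduction for general domains), so the approaches coincide. One small imprecision worth flagging: for a general mean-zero $f\in L^p(\Omega)$ the potential $\mathcal{B}[f]$ is \emph{not} supported away from $\partial\Omega$; what is actually true is that $\mathcal{B}$ maps mean-zero $C_c^\infty(\Omega)$ functions into $C_c^\infty(\Omega)^2$, and it is the $W^{1,p}$ bound together with density of such $f$ in the mean-zero $L^p$ class that places $\mathcal{B}[f]$ in $W_0^{1,p}(\Omega)^2$ for general $f$.
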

\begin{lemma}   \la{th00}
Under the assumption \eqref{zz1}, one gets
\be\la{th00jl} \ba
\sup_{   0\le t\le T  }\sigma\|P-\bar{P}\|_{L^2}^2+\int_0^T\|P-\bar{P}\|_{L^2}^2dt\leq CC_0^{1/2},\,\,\int_0^T\sigma\|P-\bar{P}\|_{L^2}^2dt\leq CC_0^{3/4}.
\ea\ee
where $C=C(\hat{\rho})$ is a positive constant depending only on $\mu$,\,$\lambda$,\,$a$,\,$\gamma$,\,$\hat{\rho}$ and $\Omega$.
 \end{lemma}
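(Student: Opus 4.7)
The approach uses the Bogovskii operator of Lemma \ref{ll27} to test the momentum equation. Set $v := \mathcal{B}[P-\bar P]$, so that $v|_{\partial\Omega}=0$, $\div v = P-\bar P$, and $\|v\|_{W^{1,p}_0}\le C\|P-\bar P\|_{L^p}$. Taking the inner product of the rewritten equation $\rho\dot u - (\lambda+2\mu)\nabla\div u + \mu\nabla^{\perp}\curl u + \nabla P = 0$ with $v$, integrating over $\Omega$, and integrating by parts (all boundary terms vanish since $v|_{\partial\Omega}=0$, while $\int \bar P\,\div v\,dx = \bar P\int(P-\bar P)\,dx = 0$), I obtain
\[
\|P-\bar P\|_{L^2}^{2} = \int \rho\dot u\cdot v\,dx + (\lambda+2\mu)\!\int \div u\,(P-\bar P)\,dx + \mu\!\int \curl u\,\curl v\,dx.
\]
Cauchy--Schwarz together with $\|v\|_{H^1_0}\le C\|P-\bar P\|_{L^2}$ then yields the pointwise-in-time inequality $\|P-\bar P\|_{L^2} \le C(\|\sqrt\rho\dot u\|_{L^2}+\|\nabla u\|_{L^2})$.

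Independently, Lemma \ref{le2} supplies the energy bound $\int G(\rho)\,dx\le CC_0$. Since $G''(\bar\rho_0)=P'(\bar\rho_0)/\bar\rho_0>0$, the ratio $(P(\rho)-P(\bar\rho_0))^{2}/G(\rho)$ extends continuously on the compact interval $[0,2\hat\rho]$, so $(P-P(\bar\rho_0))^{2}\le CG(\rho)$ pointwise, hence $\|P-P(\bar\rho_0)\|_{L^2}^{2}\le CC_0$. Combined with $|\bar P-P(\bar\rho_0)|^{2}\le |\Omega|^{-1}\|P-P(\bar\rho_0)\|_{L^2}^{2}$, this gives the time-uniform estimate $\|P-\bar P\|_{L^2}^{2}\le CC_0$. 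Because $\sigma\le 1$ and $C_0\le\varepsilon\le 1$, I immediately obtain $\sup_t \sigma\|P-\bar P\|_{L^2}^{2}\le CC_0\le CC_0^{1/2}$, which handles the sup part of \eqref{th00jl}.

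For the two time integrals I refine the Bogovskii calculation by exploiting the mass equation to extract a total time derivative: $\int\rho\dot u\cdot v\,dx = \tfrac{d}{dt}\int\rho u\cdot v\,dx - \int\rho u\otimes u:\nabla v\,dx - \int\rho u\cdot v_{t}\,dx$. Integrating in $t$ replaces the troublesome pointwise $\|\sqrt\rho\dot u\|$ term by a boundary contribution $[\int\rho u\cdot v]_{0}^{T}$, bounded by $\sup_t\|\sqrt\rho u\|_{L^2}\cdot\sup_t\|\sqrt\rho v\|_{L^2}\le CC_0^{1/2}\cdot CC_0^{1/2}=CC_0$, plus two cross terms. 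The factor $v_{t}=\mathcal{B}[(P-\bar P)_{t}]$ is controlled via the continuity-type identity $(P-\bar P)_{t} = -\div(Pu) - (\gamma-1)(P\div u - \overline{P\div u})$ together with part (3) of Lemma \ref{ll27} (applicable since $u\cdot n|_{\partial\Omega}=0$ gives $Pu\cdot n|_{\partial\Omega}=0$), whereas the $\div u\,(P-\bar P)$ and $\curl u\,\curl v$ contributions are absorbed into the left using $\int\|\nabla u\|_{L^2}^{2}\,dt\le CC_0$ and Young's inequality. This produces the $L^{2}$-in-time bound $\int_0^T\|P-\bar P\|_{L^2}^2\,dt\le CC_0^{1/2}$. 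The third, $\sigma$-weighted bound then follows by interpolating between the sup estimate and the $L^{2}_t$ estimate via Hölder in time, the $\sigma$-weight contributing the extra $C_0^{1/4}$ factor that sharpens $C_0^{1/2}$ to $C_0^{3/4}$.

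The main technical obstacle is arriving at the sharp exponents $C_0^{1/2}$ and $C_0^{3/4}$. A direct use of the pointwise Bogovskii estimate with the assumption $A_1(T)\le 2C_0^{1/3}$ yields only the weaker bound $CC_0^{1/3}$, so the $\tfrac{d}{dt}$-extraction is essential for trading $\|\sqrt\rho\dot u\|$ for energy-controlled quantities, and the continuity equation for $P-\bar P$ combined with Lemma \ref{ll27}(3) is the delicate ingredient for handling $v_t$ without losing time-integrability.
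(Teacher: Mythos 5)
Your argument for the \emph{sup} bound is genuinely different from the paper's and in fact cleaner: the paper derives $\sup_t\sigma\|P-\bar P\|_{L^2}^2\le CC_0^{1/2}$ by first establishing $\int_0^T\|P-\bar P\|_{L^2}^2\,dt\le CC_0^{1/2}$ and then Gronwalling the identity $(P-\bar P)_t+u\cdot\nabla(P-\bar P)+\gamma P\div u-(\gamma-1)\overline{(P-\bar P)\div u}=0$, whereas you extract the stronger time-uniform bound $\|P-\bar P\|_{L^2}^2\le CC_0$ directly from the energy inequality via the pointwise comparison $(P(\rho)-P(\bar\rho))^2\le CG(\rho)$ on $[0,2\hat\rho]$. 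That comparison is correct and its constant really is $C(a,\gamma,\hat\rho)$, not $C(\bar\rho_0)$; a short scaling argument ($\rho=\bar\rho s$) shows $(P(\rho)-P(\bar\rho))^2/G(\rho)=a\bar\rho^\gamma(s^\gamma-1)^2/\tilde G(s)\le C(a,\gamma)\hat\rho^\gamma$ uniformly for $\bar\rho\in(0,\hat\rho]$. Your Bogovskii/$\tfrac{d}{dt}$-extraction for the $L^2_t$ bound matches the paper's.

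The third estimate is where the proposal has a gap. The claimed ``interpolation via H\"older in time, the $\sigma$-weight contributing the extra $C_0^{1/4}$ factor'' does not survive scrutiny: $\sigma\equiv 1$ on $[1,T]$, so for large $T$ no exponent-improving factor is available on that piece, and $\int_1^T\|P-\bar P\|_{L^2}^2\,dt$ by itself is only controlled by $CC_0^{1/2}$. No H\"older splitting of the sup estimate against the unweighted $L^2_t$ estimate produces $CC_0^{3/4}$. Two repairs are available. (i) Use your own stronger ingredients consistently: since you have $\sup_t\|P-\bar P\|_{L^2}^2\le CC_0$ (not merely $CC_0^{1/2}$), the boundary term $\bigl[\int\rho u\cdot\mathcal B[P-\bar P]\,dx\bigr]_0^T$ in the Bogovskii identity is bounded by $C\sup_t\|\sqrt\rho u\|_{L^2}\sup_t\|P-\bar P\|_{L^2}\le CC_0$; together with $\int_0^T\|\nabla u\|_{L^2}^2\,dt\le CC_0$ this sharpens your own second bound to $\int_0^T\|P-\bar P\|_{L^2}^2\,dt\le CC_0$, whence $\int_0^T\sigma\|P-\bar P\|_{L^2}^2\,dt\le CC_0\le CC_0^{3/4}$ trivially. (ii) Follow the paper: multiply the pointwise Bogovskii inequality by $\sigma$ before integrating, produce $\bigl[\sigma\int\rho u\cdot\mathcal B[P-\bar P]\,dx\bigr]_0^T$ and a $\sigma'$-term, and bound the boundary piece at $t=T$ by $\sup_t\|\sqrt\rho u\|_{L^2}\cdot\sup_t(\sigma\|P-\bar P\|_{L^2}^2)^{1/2}\le CC_0^{1/2}\cdot CC_0^{1/4}$. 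Either route closes the argument; as written, the interpolation step does not.
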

\begin{proof}  Multiplying $\eqref{a1}_2$ by $\mathcal{B}[P-\Bar P]$ and integrating over $\Omega,$ we obtain
\be\la{e4} \ba &
\int(P-\Bar P)^2 dx \\&= \left(\int\rho u\cdot\mathcal{B}[P-\Bar P] dx\right)_t-\int\rho u\cdot\nabla\mathcal{B}[P-\Bar P]\cdot udx - \int\rho u\cdot\mathcal{B}[P_t-\Bar P_t]  dx \\
& \quad  +\mu\int\nabla u\cdot\nabla\mathcal{B}[P-\Bar P] dx + (\lambda+\mu)\int(P-\Bar P)\div udx \\
& \leq \left(\int\rho u\cdot\mathcal{B}[P-\Bar P] dx\right)_t+C\| u\|_{L^4}^{2}\|P-\Bar P\|_{L^{2}} +C\|  u\|_{L^2}\|\nabla u\|_{L^2}\\
& \quad  +C\|P-\Bar P\|_{L^2}\|\nabla u\|_{L^2}\\
& \leq \left(\int\rho u\cdot\mathcal{B}[P-\Bar P] dx\right)_t+\de \|P-\Bar P\|_{L^2}^2 +C(\de)\|\na u\|_{L^2}^2,
\ea\ee where in the first inequality we have used \be\ba \|\mathcal{B}[P_t-\bar P_t]\|_{L^2}&=\|\mathcal{B} [\div(Pu)] + (\ga-1) \mathcal{B} [P\div u-\ol{P\div u}]\|_{L^2} \\&\le C\|\na u\|_{L^2}.\ea\ee
Combining \eqref{e4}, \eqref{a16} and Lemma \ref{ll27} gives
\be\la{xz1} \ba
\int_0^T\|P-\bar{P}\|_{L^2}^2dt\leq CC_0^{1/2}.
\ea\ee

By $\eqref{a1}_1$, we have
\be\la{xz2} \ba
(P-\bar{P})_t+u\cdot\nabla(P-\bar{P})+\gamma P\div u-(\gamma-1)\overline{(P-\bar{P})\div u}=0.
\ea\ee
Multiplying the above equation by $2\sigma(P-\bar{P})$ and integrating with respect to $x$ yields
\be\la{xz3} \ba
\left(\sigma\int(P-\bar{P})^2dx\right)_t\leq C(\sigma+\sigma')\int(P-\bar{P})^2dx+C\sigma\int|\nabla u|^2dx,
\ea\ee
which together with \eqref{a16} and \eqref{xz1} leads to
\be\la{xz4} \ba
\sup_{   0\le t\le T  }\sigma\|P-\bar{P}\|_{L^2}^2\leq CC_0^{1/2}.
\ea\ee
\eqref{a16}, \eqref{e4}, \eqref{xz1} and \eqref{xz4} imply
\be\la{xz5} \ba
&\int_0^T\sigma\|P-\bar{P}\|_{L^2}^2dt\\
&\leq\int_0^T\left(\sigma\int\rho u\cdot\mathcal{B}[P-\Bar P] dx\right)_tdt-\int_0^T\sigma'\int\rho u\cdot\mathcal{B}[P-\Bar P] dxdt+C(\de)\int_0^T\|\na u\|_{L^2}^2dt\\
&\leq C\sup_{   0\le t\le T  }\left(\int\rho|u|^2dx\right)^{1/2}\sup_{   0\le t\le T  }\left(\sigma\|P-\bar{P}\|_{L^2}^2\right)^{1/2}+C(\hat{\rho})C_0^{1/2}+CC_0\\
&\leq CC_0^{3/4}.
\ea\ee
\end{proof}
\begin{lemma}\la{xcrle1}
 Suppose $(\n,u)$ is a smooth solution of
 \eqref{a1}-\eqref{ch1} satisfying \eqref{zz1} on $\O \times (0,T].$
  Then there is a positive constant
  $C$ depending on $\mu,\,\,\lambda,\,\,\gamma,\,\,a,\,\,\hat{\rho},$  and $\Omega$ such that
  \be\la{h14}
  A_1(T) \le  C C_0^{1/2} + C\int_0^{T}\int\sigma|\nabla u|^3dx dt,
  \ee
 and
  \be\la{h15}
    A_2(T)
    \le   C C_0^{2/3} + CA_1(T)  + C\int_0^{T}\int \sigma^2 |\nabla u|^4 dxdt.
   \ee
\end{lemma}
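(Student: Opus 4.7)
For \eqref{h14}, I would test the Helmholtz--Weyl form $\rho\dot u = \nabla F - \mu\nabla^{\perp}\curl u$ of the momentum equation against $\sigma\dot u$ and integrate over $\Omega$. Integration by parts on the right, together with $\dot u = u_{t} + u\cdot\nabla u$, converts the bulk part of the right-hand side into
\[
-\tfrac{1}{2}\tfrac{d}{dt}\!\int\sigma\bigl[(\lambda+2\mu)(\div u)^{2} + \mu(\curl u)^{2}\bigr]dx + \tfrac{1}{2}\sigma'\!\int\bigl[(\lambda+2\mu)(\div u)^{2}+\mu(\curl u)^{2}\bigr]dx,
\]
plus (a) a pressure term $\sigma\int(P-\bar P)\div\dot u\,dx$, (b) boundary surface integrals, and (c) convective commutators of the form $\sigma\int\div u\,\div(u\cdot\nabla u)\,dx$ and $\sigma\int\curl u\,\curl(u\cdot\nabla u)\,dx$, which by H\"older are bounded by $C\int\sigma|\nabla u|^{3}dx$, matching the cubic term in \eqref{h14}. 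The pressure piece I would rewrite as $\tfrac{d}{dt}\int\sigma(P-\bar P)\div u\,dx$ minus $\sigma'$- and $P_{t}$-terms; invoking the transport identity $(P-\bar P)_{t} + \div((P-\bar P)u) + (\gamma-1)(P\div u-\overline{P\div u})=0$, Lemma~\ref{th00}, and \eqref{a16}, its total time integral is bounded by $CC_{0}^{1/2}$.

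The boundary integrals from the preceding step are the main novelty compared with the no-slip case. Differentiating $u\cdot n=0$ along trajectories gives $\dot u\cdot n = -\kappa|u|^{2}$ on $\partial\Omega$, while the Navier condition reads $\curl u = 2(\kappa-\vartheta)u\cdot\omega$ on $\partial\Omega$. Substituting these into the surface integrals reduces them to terms quartic in $u$ along $\partial\Omega$, which the trace embedding $\|u\|_{L^{4}(\partial\Omega)}^{4}\le C\|u\|_{H^{1}}^{2}\|u\|_{L^{2}}^{2}$ together with \eqref{a16} bounds by $C\int|\nabla u|^{2}dx + O(C_{0})$. Collecting all of the above and integrating on $[0,T]$, and absorbing the $\sigma'$-term into the dissipation via \eqref{a16}, yields \eqref{h14}.

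For \eqref{h15} I would apply the material-derivative operator $\partial_{t}+\div(\cdot\,u)$ to each component of $(\ref{a1})_{2}$, producing
\[
\rho\ddot u^{j} = \partial_{j}F_{t} - \mu(\nabla^{\perp}\curl u_{t})^{j} + \text{commutators involving } \nabla u,\,\nabla F,\,\nabla\curl u,
\]
then test against $\sigma^{2}\dot u$ and integrate by parts to obtain
\[
\tfrac{1}{2}\tfrac{d}{dt}\!\int\!\sigma^{2}\rho|\dot u|^{2}dx + \sigma^{2}\!\int\!\bigl[(\lambda+2\mu)(\div\dot u)^{2}+\mu(\curl\dot u)^{2}\bigr]dx = \sigma\sigma'\!\int\rho|\dot u|^{2}dx + \mathcal{R}.
\]
The first term on the right integrates in time to $CA_{1}(T)$. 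The remainder $\mathcal R$ decomposes into (i) pressure commutators handled as in the first paragraph, contributing $CC_{0}^{2/3}$ via Lemma~\ref{th00} and Young's inequality; (ii) bulk commutators of the schematic form $\sigma^{2}|\nabla u|^{2}|\nabla\dot u|$, bounded by $\varepsilon\sigma^{2}\|\nabla\dot u\|_{L^{2}}^{2}+C\sigma^{2}\|\nabla u\|_{L^{4}}^{4}$, yielding the $\int\sigma^{2}|\nabla u|^{4}$ term; and (iii) boundary commutators.

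The main obstacle is precisely (iii): after material differentiation, the trace of $\dot u$ is nontrivial and $(\curl\dot u)|_{\partial\Omega}$ inherits a commutator structure from $\curl u = 2(\kappa-\vartheta)u\cdot\omega$. I would handle these using Lemma~\ref{uup1}, which isolates the obstruction $\dot u - \kappa|u|u^{\perp}$ as having zero normal trace and recovers $\|\nabla\dot u\|_{L^{2}}$ from $\|\div\dot u\|_{L^{2}}+\|\curl\dot u\|_{L^{2}}+\|\nabla u\|_{L^{4}}^{2}$, together with the identity $\dot u\cdot n = -\kappa|u|^{2}$ to convert the remaining surface integrals into quartic-in-$u$ boundary terms. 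The trace inequality and Lemma~\ref{l1} then absorb them into $\varepsilon\sigma^{2}\|\nabla\dot u\|_{L^{2}}^{2}+C\sigma^{2}\|\nabla u\|_{L^{4}}^{4}+CA_{1}(T)$, and integrating in time produces \eqref{h15}.
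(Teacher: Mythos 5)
Your outline follows the same road as the paper: multiply $(\ref{a1})_2$ by $\sigma^m\dot u$ (and, for \eqref{h15}, first apply $\partial_t+\div(u\,\cdot)$ to the Helmholtz--Weyl form $\rho\dot u=\nabla F-\mu\nabla^\perp\curl u$), integrate by parts, extract the time derivative of $\sigma^m\bigl[(\lambda+2\mu)\|\div u\|_{L^2}^2+\mu\|\curl u\|_{L^2}^2\bigr]$ (resp.\ $\sigma^m\|\rho^{1/2}\dot u\|_{L^2}^2$), use $\dot u\cdot n=-\kappa|u|^2$ and $\curl u=2(\kappa-\vartheta)u\cdot\omega$ to rewrite the surface integrals, and close via Lemma~\ref{th00}, Lemma~\ref{uup1}, and the smallness hypothesis \eqref{zz1}. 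The pressure piece, the cubic and quartic bulk terms, and the use of \eqref{tb11} all match.

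The place where your sketch as written would not close is the characterization of the boundary integrals as ``quartic in $u$, controlled by the trace of $u$.'' They are not. The boundary integrand coming from $I_2 = (\lambda+2\mu)\int\sigma^m\nabla\div u\cdot\dot u\,dx$ after integration by parts is $-\sigma^m\kappa\,\div u\,|u|^2$, which carries a normal derivative of $u$ through $\div u$; trace inequalities for $u$ alone do not control $\div u|_{\partial\Omega}$. The paper substitutes $\div u=(\lambda+2\mu)^{-1}(F+P-\bar P)$, pushes $F$ inside using the curl/divergence identities, and then bounds the result by $\|\nabla F\|_{L^2}$, which in turn is bounded by $\|\rho\dot u\|_{L^2}$ (Lemma~\ref{le3}); the $\|\rho^{1/2}\dot u\|_{L^2}^2$ piece is then absorbed into the left side. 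The same issue is worse in \eqref{h15}: the surface terms coming from $J_1$ involve $F_t$, $\div(Fu)$, and $F\dot u\cdot\nabla u\cdot n$, and the paper must rewrite e.g.\ $\int_{\partial\Omega}\sigma^m F_t\dot u\cdot n\,ds$ as $-\bigl(\int_{\partial\Omega}\sigma^m\kappa F|u|^2\,ds\bigr)_t$ plus interior integrals, producing a genuine new time-derivative boundary term $\int_{\partial\Omega}\sigma^m\kappa F|u|^2\,ds$ which has to be tracked and estimated via \eqref{kfu1}. So the structure is right, but the boundary analysis needs the effective viscous flux $F$ and Lemma~\ref{le3} in an essential way; a pure $u$-trace argument will not deliver the stated bounds.
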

\begin{proof}
 The proof of this lemma is due to Hoff \cite{H3}. For $m\ge 0,$ multiplying $(\ref{a1})_2 $ by
$\sigma^m \dot{u},$   and then integrating the resulting equality over
$\Omega$ lead  to
\be\la{I0} \ba  \int \sigma^m \rho|\dot{u}|^2dx&= -\int\sigma^m \dot{u}\cdot\nabla Pdx + (\lambda+2\mu)\int\sigma^m \nabla\div u\cdot\dot{u}dx \\
 &\quad- \mu\int\sigma^m \nabla^{\perp}\curl u\cdot\dot{u}dx \\
& \triangleq \sum_{i=1}^{3}I_i. \ea \ee

By \eqref{Pu1} and $(\ref{a1})_1$,
 \be\la{I1} \ba
I_1 = & - \int \sigma^m \dot{u}\cdot\nabla Pdx \\
= & \left(\int\sigma^m(P-\bar{P})\div u dx\right)_{t} - m\sigma^{m-1}\sigma'\int (P-\bar{P})\div udx - \int \sigma^{m}\div u P_{t}dx \\
&- \int\sigma^{m}u\cdot\nabla u\cdot\nabla Pdx\\
= & \left(\int\sigma^m(P-\bar{P})\div u dx\right)_{t} - m\sigma^{m-1}\sigma'\int (P-\bar{P})\div udx + \int \sigma^{m}\div u\div(Pu)dx \\
&+ (\gamma-1)\int\sigma^{m}P(\div u)^{2}dx  - \int_{\partial\Omega}\sigma^{m}Pu\cdot\nabla u\cdot nds + \int\sigma^{m}P\div(u\cdot\nabla u)dx\\
= & \left(\int\sigma^mP\div u dx\right)_{t} - m\sigma^{m-1}\sigma'\int (P-\bar{P})\div udx + \int\sigma^{m}P\nabla u:\nabla udx \\
&+ (\gamma-1)\int\sigma^{m}P(\div u)^{2}dx - \int_{\partial\Omega}\sigma^{m}Pu\cdot\nabla u\cdot nds\\
 \leq &\left(\int\sigma^m(P-\bar{P})\div u dx\right)_{t} + C\|\nabla u\|_{L^{2}}^{2} + Cm\sigma^{m-1}\sigma'\|\nabla u\|_{L^{2}},
\ea \ee
where in the last inequality we have used \eqref{a16} and the following fact
\bnn \ba
- \int_{\partial\Omega}\sigma^{m}Pu\cdot\nabla u\cdot nds&=\int_{\partial\Omega}\sigma^{m}\kappa  P|u|^{2}ds \\
&\leq C\int_{\partial\Omega}\sigma^{m}|u|^{2}ds \\
& \leq C\sigma^{m}\|\nabla u\|_{L^{2}}^{2}.
\ea  \enn
A direct calculation leads to
\bnn \ba
I_2 & =  (\lambda+2\mu)\int\sigma^m \nabla\div u\cdot\dot{u}dx \\
& = (\lambda+2\mu)\int_{\partial\Omega}\sigma^m\div u(\dot{u}\cdot n)ds - (\lambda+2\mu)\int\sigma^m\div u\div \dot{u}dx  \\
& = (\lambda+2\mu)\int_{\partial\Omega}\sigma^m\div u(u\cdot\nabla u\cdot n)ds - \frac{\lambda+2\mu}{2}\left(\int\sigma^{m}(\div u)^{2}dx\right)_{t} \\
&\quad - (\lambda+2\mu)\int\sigma^m\div u\div(u\cdot\nabla u)dx + \frac{m(\lambda+2\mu)}{2}\sigma^{m-1}\sigma'\int(\div u)^{2}dx \\
& = (\lambda+2\mu)\int_{\partial\Omega}\sigma^m\div u(u\cdot\nabla u\cdot n)ds - \frac{\lambda+2\mu}{2}\left(\int\sigma^{m}(\div u)^{2}dx\right)_{t} \\
&\quad +\frac{\lambda+2\mu}{2}\int\sigma^{m}(\div u)^{3}dx- (\lambda+2\mu)\int\sigma^m\div u\nabla u:\nabla udx  \\
&\quad + \frac{m(\lambda+2\mu)}{2}\sigma^{m-1}\sigma'\int(\div u)^{2}dx\\
& \leq - \frac{\lambda+2\mu}{2}\left(\int\sigma^{m}(\div u)^{2}dx\right)_t+Cm\sigma^{m-1}\sigma'\|\nabla u\|_{L^{2}}^{2}+C\int\sigma^{m}|\nabla u|^{3}dx\\
&\quad +\frac{1}{4}\sigma^{m}\|\rho^{\frac{1}{2}}\dot{u}\|_{L^{2}}^{2}+C\sigma^{m}\|\nabla u\|_{L^{2}}^{4}+C(\hat{\rho})\|\nabla u\|_{L^{2}}^{2}.
\ea\enn
On the other hand, by \eqref{h19} and \eqref{bdd2},
\bnn \ba
&\left|\int_{\partial\Omega}\div u(u\cdot\nabla u\cdot n)ds\right| \\
&=\left|\int_{\partial\Omega}\kappa \div u|u|^{2}ds\right|\\
& \leq C\left|\int_{\partial\Omega} F|u|^{2}ds\right|+C\left|\int_{\partial\Omega}(P-\bar{P})|u|^{2}ds\right|\\
& \leq C(\|\nabla F\|_{L^{2}}\|u\|_{L^{4}}^{2}+\|F\|_{L^{2}}\|u\|_{L^{6}}^{2}+\|F\|_{L^{6}}\|u\|_{L^{3}}\|\nabla u\|_{L^{2}})+C(\hat{\rho})\|\nabla u\|_{L^{2}}^{2}\\
& \leq\frac{1}{4}\|\rho^{\frac{1}{2}}\dot{u}\|_{L^{2}}^{2}+C\|\nabla u\|_{L^{2}}^{4}+C(\hat{\rho})\|\nabla u\|_{L^{2}}^{2}.
\ea  \enn
Hence,
\be\la{I21} \ba
I_2 & \leq - \frac{\lambda+2\mu}{2}\left(\int\sigma^{m}(\div u)^{2}dx\right)_t+Cm\sigma^{m-1}\sigma'\|\nabla u\|_{L^{2}}^{2}+C\int\sigma^{m}|\nabla u|^{3}dx\\
&\quad +\frac{1}{4}\sigma^{m}\|\rho^{\frac{1}{2}}\dot{u}\|_{L^{2}}^{2}+C\sigma^{m}\|\nabla u\|_{L^{2}}^{4}+C(\hat{\rho})\|\nabla u\|_{L^{2}}^{2}.
\ea\ee
Similarly,
\be\la{I3}\ba
I_3 & = -\mu\int\sigma^{m}\nabla^{\perp}\curl u\cdot\dot{u}dx \\
& = \mu\int_{\partial\Omega}\sigma^{m}\curl u(\dot{u}\cdot\omega)ds - \mu\int\sigma^{m}\curl u\curl\dot{u}dx \\
& = \mu\int_{\partial\Omega}2\sigma^{m}(\kappa-\vartheta) u\cdot\omega(\dot{u}\cdot\omega)ds\color{black}-\frac{\mu}{2}\left(\int\sigma^{m}(\curl u)^{2}dx\right)_t \\& \quad+ \frac{\mu m}{2}\sigma^{m-1}\sigma'\int(\curl u)^{2}dx- \mu\int\sigma^{m}\curl u\curl(u\cdot\nabla u)dx \\
& = \mu\left(\int_{\partial\Omega}\sigma^{m}(\kappa-\vartheta) |u|^2ds\right)_t\color{black}-\frac{\mu}{2}\left(\int\sigma^{m}(\curl u)^{2}dx\right)_t + \frac{\mu m}{2}\sigma^{m-1}\sigma'\int(\curl u)^{2}dx \\
& \quad - \mu m\sigma^{m-1}\sigma'\int_{\partial\Omega}(\kappa-\vartheta)|u|^{2}ds \color{black}- \mu\int\sigma^{m}\curl u(\div u\curl u + u\cdot\nabla\curl u)dx\\
& \quad+\mu\int_{\partial\Omega}2\sigma^{m}(\kappa-\vartheta) u\cdot\omega(u\cdot\nabla u\cdot\omega)ds\\
& = -\frac{\mu}{2}\left(\int\sigma^{m}(\curl u)^{2}dx\right)_t + \frac{\mu m}{2}\sigma^{m-1}\sigma'\int(\curl u)^{2}dx - \mu\int\sigma^{m}(\curl u)^{2}\div udx \\
& \quad - \frac{\mu}{2}\int\sigma^{m}\nabla(\curl u)^{2}\cdot udx+\mu\left(\int_{\partial\Omega}\sigma^m(\kappa-\vartheta) |u|^2ds\right)_t\\
&\quad - \mu m\sigma^{m-1}\sigma'\int_{\partial\Omega}(\kappa-\vartheta)|u|^{2}ds -\mu\int_\Omega 2\sigma^{m}\nabla^\perp((\kappa-\vartheta) u\cdot\omega)\cdot(u\cdot\nabla u)dx\\
&\quad+\mu\int_\Omega 2\sigma^{m}(\kappa-\vartheta) u\cdot\omega \curl (u\cdot\nabla u)dx\\
& = -\frac{\mu}{2}\left(\int\sigma^{m}(\curl u)^{2}dx\right)_t + \frac{\mu m}{2}\sigma^{m-1}\sigma'\int(\curl u)^{2}dx - \mu\int\sigma^{m}(\curl u)^{2}\div udx  \\
& \quad + \frac{\mu}{2}\int\sigma^{m}(\curl u)^{2}\div udx+\mu\left(\int_{\partial\Omega}\sigma^m(\kappa-\vartheta) |u|^2ds\right)_t\\
&\quad- \mu m\sigma^{m-1}\sigma'\int_{\partial\Omega}(\kappa-\vartheta)|u|^{2}ds -\mu\int_\Omega 2\sigma^{m}\nabla^\perp((\kappa-\vartheta) u\cdot\omega)\cdot(u\cdot\nabla u)dx\\
&\quad+\mu\int_\Omega 2\sigma^{m}(\kappa-\vartheta) u\cdot\omega \,\curl u \,\div udx +\mu\int_\Omega 2\sigma^{m}(\kappa-\vartheta) u\cdot\omega \,u\cdot\nabla\curl u dx \\
& \leq -\frac{\mu}{2}\left(\int\sigma^{m}(\curl u)^{2}dx\right)_t+\mu\left(\int_{\partial\Omega}\sigma^m(\kappa-\vartheta) |u|^2ds\right)_t \color{black}+ Cm\sigma^{m-1}\sigma'\|\nabla u\|_{L^{2}}^{2}\\
&\quad + C\sigma^{m}\|\nabla u\|_{L^{3}}^{3}+C\sigma^{m}\|\kappa-\vartheta\|_{H^1}\| u\|_{L^{6}}^{2}\|\nabla u\|_{L^3}+C\sigma^{m}\|\kappa-\vartheta\|_{L^6}\| u\|_{L^{6}}\|\nabla u\|_{L^3}^2\\
&\quad +C\sigma^{m}\|\kappa-\vartheta\|_{L^6}\| u\|_{L^{6}}^2\|\nabla \curl u\|_{L^2}\\
& \leq -\frac{\mu}{2}\left(\int\sigma^{m}(\curl u)^{2}dx\right)_t+\mu\left(\int_{\partial\Omega}\sigma^m(\kappa-\vartheta) |u|^2ds\right)_t \color{black}+ Cm\sigma^{m-1}\sigma'\|\nabla u\|_{L^{2}}^{2}\\
&\quad + C\sigma^{m}\|\nabla u\|_{L^{3}}^{3}+C\sigma^{m}(\|\nabla u\|_{L^{2}}^{2}+\|\nabla u\|_{L^{2}}^{4})+\frac{1}{4}\sigma^{m}\|\rho^{\frac{1}{2}}\dot{u}\|_{L^{2}}^{2}.
\ea \ee
By virtue of \eqref{I1}-\eqref{I3}, it follows from $(\ref{I0})$ that
\be\la{I4}\ba
&\left((\lambda+2\mu)\int\sigma^{m}(\div u)^{2}dx+\mu\int\sigma^{m}(\curl u)^{2}dx\right)_{t}+\int\sigma^{m}\rho|\dot{u}|^{2}dx\\
&\quad-2\left(\int\sigma^{m}(P-\bar{P})\div udx+\mu\int_{\partial\Omega}\sigma^m(\kappa-\vartheta) |u|^2ds\color{black}\right)_{t} \\
& \leq Cm\sigma^{m-1}\sigma'\|\nabla u\|_{L^{2}}+C\sigma^{m}\|\nabla u\|_{L^{2}}^{4}+C\|\nabla u\|_{L^{2}}^{2}+C\sigma^{m}\|\nabla u\|_{L^{3}}^{3}.
\ea \ee
Integrating over $(0,T]$, one has, for $m\geq1$,
\be\la{I40}\ba
&(\lambda+2\mu)\int\sigma^{m}(\div u)^{2}dx+\mu\int\sigma^{m}(\curl u)^{2}dx+\int_0^T\int\sigma^{m}\rho|\dot{u}|^{2}dxdt\\
&\quad-2\int\sigma^{m}(P-\bar{P})\div udx-2\mu\int_{\partial\Omega}\sigma^m(\kappa-\vartheta) |u|^2ds \\
& \leq CC_{0}^{1/2}+C\int_0^T\sigma^{m}\|\nabla u\|_{L^{2}}^{4}dt+C\int_0^T\sigma^{m}\|\nabla u\|_{L^{3}}^{3}dt.
\ea \ee
By \eqref{tdu1}, \eqref{a16} and \eqref{th00jl},
\be\la{I5}\ba
&\sigma^{m}\|\nabla u\|_{L^{2}}^{2}+\int_0^T\int\sigma^{m}\rho|\dot{u}|^{2}dxdt \\
& \leq CC_{0}^{1/2}+C\int_0^T\sigma^{m}\int|\nabla u|^{3}dxdt+C\int_0^T\sigma^{m}\|\nabla u\|_{L^{2}}^{4}dt.
\ea \ee
Choosing $m=1$, together with  \eqref{a16} and the assumption \eqref{zz1}, we establish $\eqref{h14}$.

Now we will prove \eqref{h15}. $(\ref{a1})_2 $ can be rewritten as
\be\la{xdy1}\ba
\rho\dot{u}=\nabla F - \mu\nabla^{\perp}\curl u.
\ea \ee
Operating $ \sigma^{m}\dot{u}^{j}[\pa/\pa t+\div
(u\cdot)] $ to $ (\ref{xdy1})^j ,$  summing all the equalities with respect to $j$, then integrating over $\Omega,$ by $(\ref{a1})_1 $ and \eqref{ch1}, we get
\be\la{ax1}\ba &\left(\frac{\sigma^{m}}{2}\int\rho|\dot{u}|^{2}dx\right)_t-\frac{m}{2}\sigma^{m-1}\sigma'\int\rho|\dot{u}|^{2}dx \\
& = \int(\sigma^{m}\dot{u}\cdot\nabla F_t+\dot{u}^{j}\div(u\partial_jF))dx-\mu\int\sigma^{m}(\dot{u}\cdot\nabla^{\perp}\curl u_t+\dot{u}^{j}\div((\nabla^{\perp}\curl u)^ju))dx \\
& \triangleq J_1+J_2.
\ea\ee
Next, we will estimate $J_1$ and $J_2$.

By \eqref{Pu2}, a direct computation yields
\be\la{ax2}\ba J_1 & =\int\sigma^{m}\dot{u}\cdot\nabla F_tdx+\int\sigma^{m}\dot{u}^{j}\div(u\partial_jF)dx \\
& = \int_{\partial\Omega}\sigma^{m}F_t\dot{u}\cdot nds-\int\sigma^{m}F_t\,\div\dot{u}dx+ \int\sigma^{m}\dot{u}\cdot\nabla\div(uF)dx\\
&\quad-\int\sigma^{m}\dot{u}^{j}\div(\nabla_ju\,F)dx \\
& = \int_{\partial\Omega}\sigma^{m}F_t\dot{u}\cdot nds - (\lambda+2\mu)\int\sigma^{m}((\div\dot{u})^{2}-\div\dot{u}\,\div(u\cdot\nabla u))dx \\
& \quad + \int\sigma^{m}\div\dot{u}\,P_tdx + \int_{\partial\Omega}\sigma^{m}\div(uF)\,\dot{u}\cdot nds- \int\sigma^{m}\div(uF)\,\div\dot{u}dx \\
& \quad  - \int_{\partial\Omega}\sigma^{m}F\dot{u}^{j}\nabla_ju\cdot nds+\int\sigma^{m}F\nabla\dot{u}:\nabla udx-(\gamma-1)\overline{P\div u}\int\sigma^{m}\div \dot{u} dx\\
& = \int_{\partial\Omega}\sigma^{m}F_t\dot{u}\cdot nds + \int_{\partial\Omega}\sigma^{m}\div(F\,u)\,\dot{u}\cdot nds -\int_{\partial\Omega}\sigma^{m}F\dot{u}\cdot\nabla u\cdot nds \\
& \quad - (\lambda+2\mu)\int\sigma^{m}(\div\dot{u})^{2}dx + (\lambda+2\mu)\int\sigma^{m}\div\dot{u}\,\nabla u:\nabla udx \\
& \quad -\gamma\int\sigma^{m} P\div\dot{u}\,\div udx-\int\sigma^{m}F\div \dot{u}\,\div udx+\int\sigma^{m}F\nabla\dot{u}:\nabla u dx\\
&\quad-(\gamma-1)\overline{P\div u}\int\sigma^{m}\div \dot{u} dx,
\ea\ee
 where in the third   equality we have used \eqref{Pu1} and the fact
\bnn\ba F_t&=(2\mu+\lm)\div u_t-P_t+\bar P_t\\&=(2\mu+\lm)\div\dot  u-(2\mu+\lm)\div(u\cdot\na u) +u\cdot\na P+\ga P\div u-(\gamma-1)\overline{P\div u}\\&=(2\mu+\lm)\div\dot  u-(2\mu+\lm)\na u:\na u  - u\cdot\na F+\ga P\div u-(\gamma-1)\overline{P\div u}.\ea\enn

Now we have to estimate the three boundary terms on the righthand side of \eqref{ax2}. In the following discussion, we will take advantage of Lemmas \ref{le3}, \ref{uup1}, Poincar\'{e}'s and Young's inequalities.

For the first boundary term,
\be\la{ax3}\ba &\int_{\partial\Omega}\sigma^{m}F_t\dot{u}\cdot nds \\
& = \left(\int_{\partial\Omega}\sigma^{m}Fu\cdot\nabla u\cdot nds\right)_t-m\sigma^{m-1}\sigma'\int_{\partial\Omega}Fu\cdot\nabla u\cdot nds\\
&\quad-\int_{\partial\Omega}\sigma^{m}F(u\cdot\nabla u\cdot n)_tds \\
& = -\left(\int_{\partial\Omega}\sigma^{m}\kappa  F|u|^{2}ds\right)_t+m\sigma^{m-1}\sigma'\int\curl((u\cdot\omega)\kappa  Fu)dx\\
&\quad+2\int\sigma^{m}\curl(\kappa  F (u\cdot\omega)u_t)dx \\
& = -\left(\int_{\partial\Omega}\sigma^{m}\kappa F|u|^{2}ds\right)_t+m\sigma^{m-1}\sigma'\int\curl((u\cdot\omega)\kappa  Fu)dx\\ &\quad+2\int\sigma^{m}\curl((u\cdot\omega)\kappa  F\dot{u})dx + 2\int\sigma^{m}\nabla_j^{\perp}((u\cdot\omega)\kappa  F)(u\cdot\nabla u)^{j}dx\\
&\quad+2\int\sigma^{m}\curl u\nabla((u\cdot\omega)\kappa  F)\cdot udx \\
&\leq-\left(\int_{\partial\Omega}\sigma^{m}\kappa F|u|^{2}ds\right)_t+Cm\sigma^{m-1}(\|F\|_{L^2}+\|\nabla F\|_{L^2})\|u\|_{L^4}^2 \\
&\quad+Cm\sigma^{m-1}\|F\|_{L^6}\|u\|_{L^3}\|\nabla u\|_{L^2}+C\sigma^m(\|F\|_{L^2}+\|\nabla F\|_{L^2})\|u\|_{L^3}\|\dot{u}\|_{L^6}\\
&\quad+C\sigma^m(\|F\|_{L^6}\|\nabla u\|_{L^2}\|\dot{u}\|_{L^3}+\|F\|_{L^6}\|u\|_{L^6}^2\|\nabla u\|_{L^2}+\|F\|_{L^6}\|u\|_{L^3}\|\nabla \dot{u}\|_{L^2})\\
&\quad+C\sigma^m(\|\nabla F\|_{L^2}\|u\|_{L^8}^2\|\nabla u\|_{L^4}+\|\nabla u\|_{L^4}^{2}\|F\|_{L^6}\|u\|_{L^3})\\
&\leq-\left(\int_{\partial\Omega}\sigma^{m}\kappa F|u|^{2}ds\right)_t+\frac{\delta}{4}\sigma^m\|\nabla\dot{u}\|_{L^2}^2+C\sigma^{m}\|\rho^{\frac{1}{2}}\dot{u}\|_{L^2}^2\|\nabla u\|_{L^2}^2+C\sigma^m\|\rho\dot{u}\|_{L^2}^2\\
&\quad+C\sigma^m(\|\nabla u\|_{L^4}^4+\|\nabla u\|_{L^2}^2+\|\nabla u\|_{L^2}^4+\|P-\bar{P}\|_{L^2}^2\|\nabla u\|_{L^2}^2+\|P-\bar{P}\|_{L^2}^2)\\
&\quad+Cm\sigma^{m-1}(\|\nabla u\|_{L^2}^2+\|\nabla u\|_{L^2}^4+\|\rho\dot{u}\|_{L^2}^2).
\ea\ee
For the second boundary term,
\be\la{cax31}\ba &\int_{\partial\Omega}\sigma^{m}\div(F\,u)\,\dot{u}\cdot nds \\
&=\int_{\partial\Omega}\sigma^{m}(\nabla F\cdot u)(\dot{u}\cdot n)ds+\frac{1}{\lambda+2\mu}\int_{\partial\Omega}(F^2+PF-\bar P F)\dot{u}\cdot n ds  \\
&=\int\sigma^{m}\nabla_j^{\perp}(\kappa |u|^{2}(u\cdot\omega))\partial_jF dx-\frac{1}{\lambda+2\mu}\int_{\partial\Omega}\kappa(F^2+PF-\bar P F)|u|^2 ds\\
&\le C\sigma^{m}(\|u\|_{L^6}^3\|\nabla F\|_{L^2}+\|\nabla u\|_{L^4}\|u\|_{L^8}^2\|\nabla F\|_{L^2})+ C\sigma^{m}\|\nabla F\|_{L^2}\|F\|_{L^6}\|u\|_{L^6}^2\\
&\quad+C\sigma^{m}(\|F\|_{L^6}\|\nabla u\|_{L^2}\|u\|_{L^3}+\|F\|_{L^6}^2\|\nabla u\|_{L^2}\|u\|_{L^6}+\|\nabla F\|_{L^2}\|u\|_{L^4}^2)  \\
&\leq C\sigma^m(\|\nabla u\|_{L^2}^3\|\rho\dot{u}\|_{L^2}+\|\nabla u\|_{L^4}\|\nabla u\|_{L^2}^2\|\rho\dot{u}\|_{L^2}+\|\nabla u\|_{L^2}^4+\|\nabla u\|_{L^4}\|\nabla u\|_{L^2}^3)\\
&\quad+C\sigma^{m}(\|\rho\dot{u}\|_{L^2}+\|\nabla u\|_{L^2})\|\rho\dot{u}\|_{L^2}^{2/3}(\|\nabla u\|_{L^2}+\|P-\bar{P}\|_{L^2})^{1/3}\|\nabla u\|_{L^2}^2  \\
&\quad + C\sigma^{m}(\|\rho\dot{u}\|_{L^2}+\|\nabla u\|_{L^2}+\|P-\bar{P}\|_{L^2})\|\nabla u\|_{L^2}^2\\
&\quad + C\sigma^{m}(\|\rho\dot{u}\|_{L^2}^2+\|\nabla u\|_{L^2}^2+\|P-\bar{P}\|_{L^2}^2)\|\nabla u\|_{L^2}^2\\
&\le C\sigma^m(\|\nabla u\|_{L^2}^2\|\rho^{\frac{1}{2}}\dot{u}\|_{L^2}^2+\|\nabla u\|_{L^4}^4+\|\nabla u\|_{L^2}^2+\|\nabla u\|_{L^2}^4),
\ea\ee
where we have utilized the following fact
\bnn\ba
\int_{\partial\Omega}\sigma^{m}(\nabla F\cdot u)(\dot{u}\cdot n)ds
&=-\int_{\partial\Omega}\sigma^{m}\kappa \nabla F\cdot u|u|^{2}ds  \\
&=-\int_{\partial\Omega}\sigma^{m}\kappa |u|^{2}(u\cdot\omega)\nabla F\cdot\omega ds\\
&=-\int\sigma^{m}\curl(\kappa |u|^{2}(u\cdot\omega)\nabla F) dx\\
&=\int\sigma^{m}\nabla_j^{\perp}(\kappa |u|^{2}(u\cdot\omega))\partial_jF dx.\\
\ea\enn
Since $\dot{u}\cdot n=-u\cdot\nabla n\cdot u=-\kappa|u|^2$ on $\partial \Omega$, one can decompose the vector $\dot{u}$ as $\dot{u}=-\kappa|u|^2\,n+(\dot{u}\cdot\omega)\,\omega$. Hence, for the third boundary term,
\be\la{cax32}\ba
&\int_{\partial\Omega}\sigma^{m} F\dot{u}\cdot\nabla u\cdot n ds \\
&=-\int_{\partial\Omega}\sigma^{m} \kappa|u|^2\,F\,n\cdot\nabla u\cdot nds+\int_{\partial\Omega}\sigma^{m}(\dot{u}\cdot\omega)F\,\omega\cdot\nabla u\cdot nds \\
&=-\int\sigma^{m}\div(\kappa|u|^2\,F\, n\cdot\nabla u)dx-\int_{\partial\Omega}\sigma^{m}(\dot{u}\cdot\omega)F\,\omega\cdot\nabla n\cdot uds \\
&=-\int\sigma^{m}\nabla(\kappa|u|^2\,F \,n^{i})\cdot\nabla_i udx-\int\sigma^{m}(u\cdot\nabla n\cdot u)\,F\,n^i\,\partial_i\div udx \\
&\quad-\int_{\partial\Omega}\sigma^{m}(\dot{u}\cdot\omega)(u\cdot\omega)F\,\omega\cdot\nabla n\cdot \omega ds \\
&=-\int\sigma^{m}\nabla(\kappa|u|^2\,F\, n^{i})\cdot\nabla_i udx-\frac{1}{2(\lambda+2\mu)}\int\sigma^{m} \kappa|u|^2\,n\cdot\nabla F^2dx \\
&\quad-\frac{1}{\lambda+2\mu}\int\sigma^{m} \kappa|u|^2\,F\,n\cdot\nabla Pdx-\int_{\partial\Omega}\sigma^{m}(\dot{u}\cdot\omega)(u\cdot\omega)\kappa F ds \\
&=-\int\sigma^{m}\nabla(\kappa|u|^2 \,F\, n^{i})\cdot\nabla_i udx-\frac{1}{2(\lambda+2\mu)}\int_{\partial\Omega}\sigma^{m} F^{2}\,\kappa|u|^2ds \\
&\quad+\frac{1}{2(\lambda+2\mu)}\int\sigma^{m} F^{2}\div(\kappa|u|^2\,n)dx-\frac{1}{\lambda+2\mu}\int_{\partial\Omega}\sigma^{m}PF\,\kappa|u|^2ds \\
&\quad+\frac{1}{\lambda+2\mu}\int\sigma^{m} P\div(\kappa|u|^2F\, n)dx-\int_{\partial\Omega}\sigma^{m}(\dot{u}\cdot\omega)(u\cdot\omega)\kappa Fds \\
&\le\frac{\delta}{8}\sigma^{m}\|\nabla\dot{u}\|_{L^2}^{2}+C\sigma^{m}\|\rho^{\frac{1}{2}}\dot{u}\|_{L^2}^2\|\nabla u\|_{L^2}^2+C\sigma^{m}(\|\nabla u\|_{L^2}^{2}+\|\nabla u\|_{L^2}^{4}+\|\nabla u\|_{L^4}^{4}).
\ea\ee
Together with the estimates of all boundary terms, we deduce from \eqref{ax2} that
\be\la{ycax1}\ba
J_1&\leq\frac{\delta}{2}\sigma^{m}\|\nabla\dot{u}\|_{L^2}^2+C\sigma^{m}\|\rho^{\frac{1}{2}}\dot{u}\|_{L^2}^2\|\nabla u\|_{L^2}^2+C\sigma^{m}\|\rho\dot{u}\|_{L^2}^2+ C\sigma^{m}\|\nabla u\|_{L^2}^2\\
&\quad + C\sigma^{m}\|\nabla u\|_{L^4}^4-(\lambda+2\mu)\sigma^m\|\div \dot{u}\|_{L^2}^2-\left(\int_{\partial\Omega}\sigma^{m}\kappa  F|u|^2ds\right)_t\\
&\quad + C\sigma^{m}(\|\nabla u\|_{L^2}^4+\|P-\bar{P}\|_{L^2}^2\|\nabla u\|_{L^2}^2+\|P-\bar{P}\|_{L^2}^2)\\
&\quad + Cm\sigma^{m-1}(\|\nabla u\|_{L^2}^2+\|\nabla u\|_{L^2}^4+\|\rho\dot{u}\|_{L^2}^2).
\ea\ee
For $J_2$, integrating by part shows that
\be\la{ax4XG}\ba J_2&=-\mu\int\sigma^{m}\dot{u}\cdot\nabla^{\perp}\curl u_tdx-\mu\int\sigma^{m}\dot{u}^{j}\div(u\nabla_j^{\perp}\curl u)dx \\
& = \mu\int_{\partial\Omega}\sigma^{m}\curl u_t\dot{u}\cdot\omega ds \color{black}- \mu\int\sigma^{m}\curl\dot{u}\curl u_tdx \\
& \quad - \mu\int\sigma^{m}\dot{u}^{j}(\nabla_j^{\perp}\div(u\curl u)-\div(\nabla_j^{\perp}u\curl u))dx \\
& =  \mu\int_{\partial\Omega}\sigma^{m}\curl u_t\dot{u}\cdot\omega ds \color{black}-\mu\int\sigma^{m}(\curl\dot{u})^{2}dx+\mu\int\sigma^{m}\curl\dot{u}\curl(u\cdot\nabla u)dx \\
& \quad -\mu\int\sigma^{m}\dot{u}\cdot\nabla^{\perp}\div(u\curl u)dx+\mu\int\sigma^{m}\dot{u}^{j}\div(\nabla_j^{\perp}u\curl u)dx \\
& =  \mu\int_{\partial\Omega}\sigma^{m}\curl u_t\dot{u}\cdot\omega ds \color{black}-\mu\int\sigma^{m}(\curl\dot{u})^{2}dx+\mu\int\sigma^{m}\curl\dot{u}\,\div(u\curl u)dx \\
& \quad +\mu\int_{\partial\Omega}\sigma^{m}\div(u\curl u)\dot{u}\cdot\omega ds-\mu\int\sigma^{m}\curl\dot{u}\,\div(u\curl u)dx \\
& +\mu\int_{\partial\Omega}\sigma^{m}\dot{u}^{j}\curl u\nabla_j^{\perp}u\cdot nds-\mu\int\sigma^{m}\nabla\dot{u}^{j}\cdot\nabla_j^{\perp}u\curl udx\\
& =\mu\int_{\partial\Omega}\sigma^{m}\curl u_t\dot{u}\cdot\omega ds+\mu\int_{\partial\Omega}\sigma^{m}\div(u\curl u)\dot{u}\cdot\omega ds+\mu\int_{\partial\Omega}\sigma^{m}\dot{u}^{j}\curl u\nabla_j^{\perp}u\cdot nds \\
&\quad-\mu\int\sigma^{m}(\curl\dot{u})^{2}dx+\mu\int\sigma^{m}\nabla\dot{u}^{j}\cdot\nabla_j^{\perp}u\curl udx
\ea\ee
Now we have to estimate these three boundary terms in \eqref{ax4XG}. Notice that $\curl u=2(\kappa-\vartheta)u\cdot\omega$, a direct calculation together with \eqref{tb90} gives
\be\la{ax4XG1}\ba &\int_{\partial\Omega}\sigma^{m}\curl u_t\dot{u}\cdot\omega ds+\int_{\partial\Omega}\sigma^{m}\div(u\curl u)\dot{u}\cdot\omega ds+\int_{\partial\Omega}\sigma^{m}\dot{u}^{j}\curl u\nabla_j^{\perp}u\cdot nds\\
 &=2\int_{\partial\Omega}\sigma^{m}(\kappa-\vartheta) (\dot{u}\cdot\omega)^2ds-2\int_{\partial\Omega}\sigma^{m}(\kappa-\vartheta) \dot{u}\cdot\omega\,u\cdot\nabla (u\cdot\omega)ds\\
 &\quad+2\int_{\partial\Omega}\sigma^{m}(\kappa-\vartheta) \dot{u}\cdot\omega(u\cdot\nabla \omega\cdot u)ds+2\int_{\partial\Omega}\sigma^{m}(\kappa-\vartheta)\div u \, u\cdot\omega\,\dot u\cdot\omega ds \\
 &\quad+2\int_{\partial\Omega}\sigma^{m} u\cdot\nabla(\kappa-\vartheta) u\cdot\omega\,\dot{u}\cdot\omega ds+2\int_{\partial\Omega}\sigma^{m}(\kappa-\vartheta) \dot{u}\cdot\omega\,u\cdot\nabla (u\cdot\omega)ds\\
 &\quad +2\int_\Omega\sigma^{m}\nabla\left((\kappa-\vartheta)\,u\cdot\omega\, \dot{u}^{j}\right)\cdot\nabla_j^{\perp}u dx+2\int_\Omega\sigma^{m}(\kappa-\vartheta)\,u\cdot\omega\, \dot{u}\cdot\nabla^{\perp}\div u dx\\
 &=2\int_{\partial\Omega}\sigma^{m}(\kappa-\vartheta) (\dot{u}\cdot\omega)^2ds\color{black}-2\int_{\partial\Omega}\sigma^{m}(\kappa-\vartheta) \dot{u}\cdot\omega(u\cdot\nabla \omega\cdot u)ds\\
 &\quad-2\int_\Omega\sigma^{m} \nabla^\perp\left( u\cdot\omega\,\dot{u}\cdot\omega\,|u|\right)\cdot\nabla(\kappa-\vartheta) dx+2\int_\Omega\sigma^{m}\nabla\left((\kappa-\vartheta)\,u\cdot\omega\, \dot{u}^{j}\right)\cdot\nabla_j^{\perp}u dx\\
 &\quad +2\int_\Omega\sigma^{m}\div u \,\curl \left((\kappa-\vartheta)\,u\cdot\omega\, \dot{u})\right) dx\\
&\leq C\sigma^m(\|\nabla u\|_{L^2}^2\|\dot{u}\|_{L^6}+\|\nabla u\|_{L^2}^2\|\nabla\dot{u}\|_{L^2}+\|\nabla u\|_{L^4}^2\|\dot{u}\|_{L^2}+\|\nabla u\|_{L^4}^2\|\nabla\dot{u}\|_{L^2})\\
&\leq \frac{\delta}{2}\sigma^m\|\nabla\dot{u}\|_{L^2}^2+C\sigma^m\|\nabla u\|_{L^4}^4+C\sigma^m\|\nabla u\|_{L^2}^4.
\ea\ee
Consequently,
\be\la{ax4}\ba J_2\leq-\mu\sigma^m\|\curl \dot{u}\|_{L^2}^2+\frac{\delta}{2}\sigma^m\|\nabla\dot{u}\|_{L^2}^2+C\sigma^m(\|\nabla u\|_{L^4}^4+\|\nabla u\|_{L^2}^4).
\ea\ee
It follows from \eqref{ax1}, \eqref{ycax1}, \eqref{ax4} and Young's inequality that
\be\la{ax40}\ba
&\left(\frac{\sigma^{m}}{2}\|\rho^{\frac{1}{2}}\dot{u}\|_{L^2}^2\right)_t+(\lambda+2\mu)\sigma^{m}\|\div\dot{u}\|_{L^2}^2+\mu\sigma^{m}\|\curl\dot{u}\|_{L^2}^2\\
& \leq\frac{m}{2}\sigma^{m-1}\sigma'\|\rho\dot{u}\|_{L^2}^2-\left(\int_{\partial\Omega}\sigma^{m}\kappa  F|u|^{2}ds\right)_t +\delta\sigma^{m}\|\nabla\dot{u}\|_{L^2}^2+C\sigma^{m}\|\nabla u\|_{L^2}^4\\
&\quad+C\sigma^{m}(\|\rho\dot{u}\|_{L^2}^2\|\nabla u\|_{L^2}^2 +\|\rho\dot{u}\|_{L^2}^2+\|\nabla u\|_{L^2}^2+\|\nabla u\|_{L^4}^4+\|P-\bar{P}\|_{L^2}^2\|\nabla u\|_{L^2}^2)\\
&\quad + C\sigma^{m}\|P-\bar{P}\|_{L^2}^2 + Cm\sigma^{m-1}(\|\nabla u\|_{L^2}^2+\|\nabla u\|_{L^2}^4+\|\rho\dot{u}\|_{L^2}^2),
\ea\ee
together with \eqref{tb11}, which implies
\be\la{ax401}\ba
&\left(\sigma^{m}\|\rho^{\frac{1}{2}}\dot{u}\|_{L^2}^2\right)_t+(\lambda+2\mu)\sigma^{m}\|\div\dot{u}\|_{L^2}^2+\mu\sigma^{m}\|\curl\dot{u}\|_{L^2}^2\\
& \leq Cm\sigma^{m-1}\sigma'\|\rho^{\frac{1}{2}}\dot{u}\|_{L^2}^2-\left(2\int_{\partial\Omega}\sigma^{m}\kappa  F|u|^{2}ds\right)_t +C\sigma^{m}\|\rho^{\frac{1}{2}}\dot{u}\|_{L^2}^2\|\nabla u\|_{L^2}^2\\
& \quad +C\sigma^{m}(\|\rho^{\frac{1}{2}}\dot{u}\|_{L^2}^2+\|\nabla u\|_{L^2}^2+\|\nabla u\|_{L^4}^4+\|\nabla u\|_{L^2}^4+\|P-\bar{P}\|_{L^2}^2\|\nabla u\|_{L^2}^2)\\
&\quad + C\sigma^{m}\|P-\bar{P}\|_{L^2}^2 + Cm\sigma^{m-1}(\|\nabla u\|_{L^2}^2+\|\nabla u\|_{L^2}^4+\|\rho\dot{u}\|_{L^2}^2),
\ea\ee
if we choose $\delta$ small enough.

Integrating over $(0,T]$, by \eqref{tb11} and Lemma \ref{le3}, we have, for $m\geq1$,
\be\la{ax402}\ba
&\sigma^{m}\|\rho^{\frac{1}{2}}\dot{u}\|_{L^2}^2+\int_0^T\sigma^{m}\|\nabla\dot{u}\|_{L^2}^2dt\\
& \leq Cm\int_0^{\sigma(T)}\sigma^{m-1}\|\rho^{\frac{1}{2}}\dot{u}\|_{L^2}^2dt+\frac{\sigma^{m}}{2}\|\rho^{\frac{1}{2}}\dot{u}\|_{L^2}^2+C\sigma^{m}\|\nabla u\|_{L^2}^4+C\sigma^{m}\|\nabla u\|_{L^2}^2\\
&\quad+C\int_0^T\sigma^{m}\|\rho^{\frac{1}{2}}\dot{u}\|_{L^2}^2\|\nabla u\|_{L^2}^2dt+C\int_0^T\sigma^{m}\|\rho^{\frac{1}{2}}\dot{u}\|_{L^2}^2dt+C\int_0^T\sigma^{m}\|\nabla u\|_{L^2}^2dt\\
& \quad +C\int_0^T\sigma^{m}\|\nabla u\|_{L^4}^4dt+C\int_0^T\sigma^{m}\|\nabla u\|_{L^2}^4dt+C\int_0^T\sigma^{m}\|P-\bar{P}\|_{L^2}^2\|\nabla u\|_{L^2}^2dt\\
& \quad +C\int_0^T\sigma^{m}\|P-\bar{P}\|_{L^2}^2dt+C\int_0^Tm\sigma^{m-1}(\|\nabla u\|_{L^2}^2+\|\nabla u\|_{L^2}^4+\|\rho\dot{u}\|_{L^2}^2)dt.
\ea\ee
where we have taken advantage of the fact
\be\la{kfu1}\ba
\left|\int_{\partial\Omega}\sigma^{m}\kappa  F|u|^{2}ds\right|&\leq C\sigma^m(\|\nabla F\|_{L^2}\|\nabla u\|_{L^2}^2+\|F\|_{L^6}\|u\|_{L^3}\|\nabla u\|_{L^2})\\
&\leq\frac{\sigma^m}{2}\|\rho^{\frac{1}{2}}\dot{u}\|_{L^2}^2+C(\|\nabla u\|_{L^2}^4+\|\nabla u\|_{L^2}^2).
\ea\ee
Therefore,
\be\la{ax7}\ba
&\sigma^{m}\|\rho^{\frac{1}{2}}\dot{u}\|_{L^2}^2+\int_0^T\sigma^{m}\|\nabla\dot{u}\|_{L^2}^2dt\\
& \leq Cm\int_0^{\sigma(T)}\sigma^{m-1}\|\rho^{\frac{1}{2}}\dot{u}\|_{L^2}^2dt+C\sigma^{m}\|\nabla u\|_{L^2}^4+C\sigma^{m}\|\nabla u\|_{L^2}^2\\
&\quad+C\int_0^T\sigma^{m}\|\rho^{\frac{1}{2}}\dot{u}\|_{L^2}^2\|\nabla u\|_{L^2}^2dt+C\int_0^T\sigma^{m}\|\rho^{\frac{1}{2}}\dot{u}\|_{L^2}^2dt+C\int_0^T\sigma^{m}\|\nabla u\|_{L^2}^2dt\\
& \quad +C\int_0^T\sigma^{m}\|\nabla u\|_{L^4}^4dt+C\int_0^T\sigma^{m}\|\nabla u\|_{L^2}^4dt+C\int_0^T\sigma^{m}\|P-\bar{P}\|_{L^2}^2\|\nabla u\|_{L^2}^2dt\\
& \quad +C\int_0^T\sigma^{m}\|P-\bar{P}\|_{L^2}^2dt+C\int_0^Tm\sigma^{m-1}(\|\nabla u\|_{L^2}^2+\|\nabla u\|_{L^2}^4+\|\rho\dot{u}\|_{L^2}^2)dt.
\ea\ee

Now take $m=2$ in \eqref{ax7}, by \eqref{a16}, \eqref{th00jl} and \eqref{zz1}, we get \eqref{h15}.
\end{proof}
\begin{lemma}\la{zc1} Suppose $(\n,u)$ be a smooth solution of \eqref{a1}-\eqref{ch1}   on $\O \times (0,T] $, then there exists a positive constant $K$ depending only on $\mu,\,\,\lambda,\,\,\gamma,\,\,a,\,\,\hat{\rho},\,\,s,\,\,\Omega,$ and $M,$ such that
   \be\la{uv1}  \sup_{0\le t\le \si(T)}t^{1-s}\|\na
u\|_{L^2}^2+\int_0^{\si(T)}t^{1-s}\int\n|\dot u|^2dxdt\le
K(\hat{\rho},M), \ee
 \be\la{uv2}  \sup_{0\le t\le \si(T)}t^{2-s}\int\n|\dot u|^2dx+\int_0^{\si(T)}t^{2-s}\int|\nabla\dot{u}|^2dxdt\le
K(\hat{\rho},M). \ee
\end{lemma}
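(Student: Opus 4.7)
The plan is to mirror the energy argument of Lemma \ref{xcrle1}, replacing the weight $\sigma^m$ by $t^{1-s}$ for \eqref{uv1} and by $t^{2-s}$ for \eqref{uv2}, and working only on the short-time interval $[0,\sigma(T)]$. Because $1-s<1$, the weight $t^{1-s}$ is less degenerate at $t=0$ than $\sigma(t)$, so differentiating it yields extra commutator contributions proportional to $t^{-s}E(t)$ whose control will be the essential new ingredient.

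Step 1 (\eqref{uv1}): Multiply the reformulated momentum equation $\rho\dot u=\nabla F-\mu\nabla^{\perp}\curl u$ by $t^{1-s}\dot u$ and integrate over $\Omega$, performing exactly the boundary manipulations of \eqref{I4}--\eqref{I5} (using $u\cdot n=0$, $\curl u=2(\kappa-\vartheta)u\cdot\omega$, \eqref{bdd2} and Lemma \ref{le3}). This yields
\begin{equation*}
\bigl(t^{1-s}E(t)\bigr)_t+t^{1-s}\|\rho^{1/2}\dot u\|_{L^2}^2\le(1-s)t^{-s}E(t)+Ct^{1-s}\bigl(\|\nabla u\|_{L^3}^3+\|\nabla u\|_{L^2}^2+\|\nabla u\|_{L^2}^4\bigr),
\end{equation*}
where $E(t)=(\lambda+2\mu)\|\div u\|_{L^2}^2+\mu\|\curl u\|_{L^2}^2-2\int(P-\bar P)\div u\,dx-2\mu\int_{\partial\Omega}(\kappa-\vartheta)|u|^2\,ds$. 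Since $u_0\in H^2$ by \eqref{dt1}, $E$ is bounded on $[0,T]$, so $t^{1-s}E(t)|_{t=0}=0$. Integrating from $0$ to any $t\le\sigma(T)$ gives
\begin{equation*}
t^{1-s}E(t)+\int_0^t\tau^{1-s}\|\rho^{1/2}\dot u\|_{L^2}^2\,d\tau\le(1-s)\int_0^t\tau^{-s}E(\tau)\,d\tau+\int_0^t\tau^{1-s}(\text{cubic and quartic RHS})\,d\tau.
\end{equation*}

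Step 2 (closing the commutator): The principal task is to bound $\int_0^{\sigma(T)}\tau^{-s}\|\nabla u\|_{L^2}^2 d\tau$ by a constant depending only on $\hat\rho$ and $M$. First propagate the $H^s$-bound to obtain $\sup_{0\le t\le\sigma(T)}\|u(t)\|_{H^s}\le C(\hat\rho,M)$, using the $H^s$-smallness of $u_0$ from \eqref{dt2} together with Lemma \ref{nzc1}. Then combine the Sobolev interpolation $\|\nabla u\|_{L^2}^2\le C\|u\|_{H^s}^{2/(2-s)}\|u\|_{H^2}^{2(1-s)/(2-s)}$ with the Lam\'{e} elliptic estimate of Lemma \ref{zhle} to reduce matters to controlling $\int_0^{\sigma(T)}\tau^{-s}\|\rho^{1/2}\dot u\|_{L^2}^{2(1-s)/(2-s)} d\tau$. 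An application of H\"{o}lder's inequality with conjugate exponents $p=2-s$ and $q=(2-s)/(1-s)$ produces the borderline residual weight $\tau^{-s(2-s)}$, which is integrable precisely because $(1-s)^2>0$, while the remaining factor $\bigl(\int_0^{\sigma(T)}\|\rho^{1/2}\dot u\|_{L^2}^2 d\tau\bigr)^{(1-s)/(2-s)}$ is controlled by a bootstrap from the LHS of \eqref{uv1} already being proved, together with Proposition \ref{pr1}. The cubic term $\int_0^t\tau^{1-s}\|\nabla u\|_{L^3}^3 d\tau$ is handled by Lemma \ref{le3} (specifically \eqref{h18} with $p=3$) and Young's inequality, while the quartic and pressure terms are absorbed using \eqref{th00jl}.

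Step 3 (\eqref{uv2}): Apply the operator $t^{2-s}\dot u^j\bigl[\partial_t+\div(u\cdot)\bigr]$ to the $j$-th component of $\rho\dot u=\nabla F-\mu\nabla^{\perp}\curl u$ and reproduce the entire calculation leading to \eqref{ax7} with $\sigma^2$ replaced by $t^{2-s}$. The only new contribution arises from differentiating the weight, producing the term $(2-s)\int_0^t\tau^{1-s}\|\rho^{1/2}\dot u\|_{L^2}^2 d\tau$, which is precisely the quantity just bounded in \eqref{uv1}. All remaining nonlinear contributions ($\|\nabla u\|_{L^4}^4$, $\|\nabla u\|_{L^2}^4$, $\|P-\bar P\|_{L^2}^2\|\nabla u\|_{L^2}^2$, and the boundary integral $\int_{\partial\Omega}t^{2-s}\kappa F|u|^2ds$) are bounded exactly as in Lemma \ref{xcrle1} via Lemma \ref{le3}, \eqref{th00jl}, and \eqref{zz1}.

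The main obstacle is the borderline integrability in Step 2: the H\"{o}lder pair $(p,q)=(2-s,(2-s)/(1-s))$ produces a residual weight $\tau^{-s(2-s)}$ that is integrable only because $s\in(\tfrac12,1)$ implies $(1-s)^2>0$; any less delicate interpolation produces a logarithmic divergence at $\tau=0$. The requirement that $K$ depend solely on $\hat\rho$ and $M$ precludes using $\|u_0\|_{H^1}$ directly and forces the detour through $H^s$-based interpolation combined with Lemma \ref{nzc1}.
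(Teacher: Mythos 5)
Your direct weighted-energy plan in Step~2 has a fatal gap, and the paper's actual proof uses a completely different device precisely to avoid it. When you multiply the momentum balance by $t^{1-s}\dot u$ and integrate, the commutator term $(1-s)\int_0^{\sigma(T)}t^{-s}E(t)\,dt$ with $E(t)\approx\|\nabla u\|_{L^2}^2$ appears, and you must show it is finite with a constant depending only on $\hat\rho$ and $M$. But the conclusion \eqref{uv1} itself tells you the expected sharp behavior $\|\nabla u(t)\|_{L^2}^2\lesssim t^{-(1-s)}$ near $t=0$, and plugging this into the commutator produces $t^{-s}\cdot t^{-(1-s)}=t^{-1}$, which is \emph{borderline non-integrable}. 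Your proposed remedy does not rescue this: (i) the asserted propagation $\sup_{0\le t\le\sigma(T)}\|u(t)\|_{H^s}\le C(\hat\rho,M)$ is not established anywhere and certainly does not follow from Lemma~\ref{nzc1}, which only gives $\sup_t\int\rho|u|^{2+\nu}dx\le C$, a weighted $L^{2+\nu}$ bound rather than a fractional Sobolev bound; and (ii) even granting that, your H\"older step with $(p,q)=(2-s,(2-s)/(1-s))$ isolates the factor $\left(\int_0^{\sigma(T)}\|\rho^{1/2}\dot u\|_{L^2}^2\,dt\right)^{(1-s)/(2-s)}$, i.e.\ an \emph{unweighted} time integral of $\|\rho^{1/2}\dot u\|_{L^2}^2$. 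That quantity is not controlled by the left side of \eqref{uv1} (which carries the weight $t^{1-s}$), nor by \eqref{zz1} (which carries $\sigma$); it is only bounded in Lemma~\ref{xle1}, where the constant depends on $\|g\|_{L^2}$ and $\|\nabla u_0\|_{H^1}$, which the statement forbids. Any attempt to redistribute the weight in the H\"older pairing so that $t^{1-s}\|\rho^{1/2}\dot u\|_{L^2}^2$ appears leaves a residual $t^{-1}$ on the other factor, so the scheme is self-defeating: it is genuinely borderline, not merely ``delicate.''

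The paper sidesteps this altogether. It decomposes $u=w_1+w_2$, where $w_1$ solves the homogeneous problem \eqref{fcz1} with initial data $w_{10}=u_0$ and $w_2$ solves the pressure-forced problem \eqref{fcz2} with zero initial data, so $w_2$ needs no time weight at all. For $w_1$ the map $w_{10}\mapsto w_1(\cdot,t)$ is \emph{linear}, and the two endpoint energy estimates \eqref{xbh10} (weight $1$, data in $H^1$) and \eqref{xbh11} (weight $t$, data in $L^2$) are interpolated by the Stein--Weiss theorem to produce \eqref{xbh12} with the fractional weight $t^{1-\theta}$ and constant $C\|w_{10}\|_{\dot H^{\theta}}^2$, for any $\theta\in[s,1]$. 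Taking $\theta=s$ gives exactly \eqref{uv1} with dependence only on $\hat\rho,M$. The fractional weight is thus generated by interpolation in the initial-data variable of a linear operator, not by differentiating a fractional time weight in an energy identity. Your Step~3 for \eqref{uv2} is essentially correct and does match the paper (take $m=2-s$ in \eqref{ax7}), but it relies on \eqref{uv1} being already in hand, so the gap in Step~2 propagates.
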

\begin{proof} Suppose $w_1(x,t)$ and $w_2(x,t)$ are the solutions of the following equations
\be\la{fcz1}\begin{cases}
  Lw_1=0, &x\in\Omega,\\  w_1(x,0)=w_{10}(x), &x\in\Omega,\\w_1\cdot n=0,\,\,\curl w_1=(\kappa-\vartheta) w_1\cdot\omega, &x\in\partial\Omega,
\end{cases}\ee
and
\be\la{fcz2}\begin{cases}
  Lw_2=-\nabla(P-\bar{P}), &x\in\Omega\\  w_2(x,0)=0, &x\in\Omega,\\w_2\cdot n=0,\,\,\curl w_2=(\kappa-\vartheta)  w_2\cdot\omega, &x\in\partial\Omega ,
\end{cases}\ee
respectively, where $Lf\triangleq\rho\dot{f}-\mu\Delta f-(\lambda+\mu)\nabla\div f .$

Recall the previous analysis in the proof of Lemma \ref{le3}, by the standard $L^{p}$-estimate for an elliptitic system and Sobolev's inequality, for any $2\leq p<\infty,$ we easily have

\be \la{xbh1} \ba
\|\nabla w_1\|_{L^{p}}\leq C\|w_1\|_{H^{2}}\leq C(\|\rho\dot{w}_1\|_{L^{2}}+\|\nabla w_1\|_{L^{2}}),
\ea\ee
\be \la{xbh2} \ba
\|\nabla F_{w_2}\|_{L^{p}}\leq C(\|\rho\dot{w}_2\|_{L^{p}}+\|\nabla w_2\|_{L^{p}}),
\ea\ee
\be \la{xbh3} \ba
\|F_{w_2}\|_{L^{p}}&\leq C(\|\nabla F_{w_2}\|_{L^{2}}+\|F_{w_2}\|_{L^{2}})\\
& \le C (\|\rho\dot{w}_2\|_{L^{2}}+\|\nabla w_2\|_{L^{2}}+\|P-\bar{P}\|_{L^{2}}),
\ea\ee
\be \la{xbh4} \ba
\|\nabla w_2\|_{L^{p}}&\leq C\|\rho^{\frac{1}{2}}\dot{w}_2\|_{L^{2}}^{1-\frac{2}{p}}(\|\nabla w_2\|_{L^{2}}+\|P-\bar{P}\|_{L^{2}})^{\frac{2}{p}}+C(\|\nabla w_2\|_{L^{2}}+\|P-\bar{P}\|_{L^{p}}),
\ea\ee
where $F_{w_2}=(\lambda+2\mu)\div w_2-(P-\bar{P})  .$

Just as the proof of \eqref{a16} that
\be\la{xbh5}  \sup_{0\le t\le \si(T)}\int\rho|w_1|^2dx+\int_0^{\si(T)}\int|\nabla w_1|^2dxdt\le
C(\hat{\rho})\int|w_{10}|^{2}dx  ,\ee
and
\be\la{xbh6}  \sup_{0\le t\le \si(T)}\int\rho|w_2|^2dx+\int_0^{\si(T)}\int|\nabla w_2|^2dxdt\le
C(\hat{\rho})C_0  .\ee

We multiply \eqref{fcz1} by $w_{1t}$ and integrate the result over $\Omega.$ By \eqref{xbh1}, \eqref{xuv1},  Sobolev's and Young's inequalities, we get
\be \la{xbh9} \ba
&\left(\frac{\lambda+2\mu}{2}\int(\div w_1)^{2}dx + \frac{\mu}{2}\int(\curl w_1)^{2}dx-\int_{\partial\Omega}(\kappa-\vartheta) |w_1|^2ds\color{black}\right)_t + \int\rho|\dot{w}_1|^{2}dx\\
& =\int\rho\dot{w}_1\cdot(u\cdot\nabla w_1)dx \\
& \le C(\hat{\rho})\|\rho^{\frac{1}{2}}\dot{w}_1\|_{L^{2}}\|\nabla w_1\|_{L^{\frac{2(\nu+2)}{\nu}}}\|\rho^{\frac{1}{2+\nu}}u\|_{L^{2+\nu}} \\
& \le C(\hat{\rho})\|\rho^{\frac{1}{2}}\dot{w}_1\|_{L^{2}}\|\nabla^{2} w_1\|_{L^{2}}^{\frac{2}{2+\nu}}\|\nabla w_1\|_{L^{2}}^{\frac{\nu}{2+\nu}}+C(\hat{\rho})\|\rho^{1/2}\dot{w}_1\|_{L^2}\|\nabla w_1\|_{L^2} \\
& \le C(\hat{\rho},M)\left(\|\rho^{\frac{1}{2}}\dot{w}_1\|_{L^{2}}+\|\nabla w_1\|_{L^{2}}\right)^{\frac{4+\nu}{2+\nu}}\|\nabla w_1\|_{L^{2}}^{\frac{\nu}{2+\nu}}+C(\hat{\rho})\|\rho^{1/2}\dot{w}_1\|_{L^2}\|\nabla w_1\|_{L^2} \\
& \le \frac{1}{2}\|\rho^{\frac{1}{2}}\dot{w}_1\|_{L^{2}}^{2}+C(\hat{\rho},M)\|\nabla w_1\|_{L^{2}}^{2}.
\ea\ee
\eqref{xbh5} and Gronwall's inequality show that
\be \la{xbh10} \ba
\sup_{0\le t\le \si(T)}\|\nabla w_1\|_{L^{2}}^{2}+\int_0^{\sigma(T)}\int\rho|\dot{w}_1|^{2}dxdt\leq C(\hat{\rho},M)\|\nabla w_{10}\|_{L^{2}}^{2}  ,
\ea\ee
and
\be \la{xbh11} \ba
\sup_{0\le t\le \si(T)}t\|\nabla w_1\|_{L^{2}}^{2}+\int_0^{\sigma(T)}t\int\rho|\dot{w}_1|^{2}dxdt\leq C(\hat{\rho},M)\|w_{10}\|_{L^{2}}^{2}  .
\ea\ee

Obviously, operator $w_{10}\mapsto w_1(\cdot,t)$ is linear. One can deduce from (\ref{xbh10}) ,(\ref{xbh11}) and the standard Stein-Weiss interpolation argument \cite{bl} that for any $\theta\in [s,1],$
\be \la{xbh12} \ba
 \sup_{0\le t\le
\si(T)}t^{1-\theta}\|\nabla
w_1\|_{L^2}^2+\int_0^{\si(T)}t^{1-\theta}\int\n|\dot
{w_1}|^2dxdt\leq C(\hat{\rho},M)\| w_{10}\|_{\dot H^\theta}^2,
\ea\ee
where $C$ is independent of $\theta.$

 Multiplying (\ref{fcz2}) by $w_{2t} $ and integrate the resulting equality over $\Omega$, we have
\be \la{xbh13} \ba
&\left(\frac{\lambda+2\mu}{2}\int(\div w_2)^{2}dx+\frac{\mu}{2}\int(\curl w_2)^{2}dx-\int P\div w_2dx -\int_{\partial\Omega}(\kappa-\vartheta) |w_2|^2ds\color{black}\right)_t \\
&\quad +\int\rho|\dot{w}_2|^{2}dx\\
& =\int\rho\dot{w}_2\cdot(u\cdot\nabla w_2)dx-\int P_t\div w_2dx \\
& =\int\rho\dot{w}_2\cdot(u\cdot\nabla w_2)dx - \frac{1}{\lambda+2\mu}\int PF_{w_2}\div u dx- \frac{1}{\lambda+2\mu}\int P\nabla F_{w_2}\cdot u dx \\
&\quad- \frac{1}{2(\lambda+2\mu)}\int(P-\bar{P})^{2}\div u dx+\gamma\int P\div u\div w_2dx \\
& \leq C\|\rho^{\frac{1}{2}}\dot{w}_2\|_{L^{2}}\|\rho^{\frac{1}{2+\nu}}u\|_{L^{2+\nu}}(\|\nabla w_2\|_{L^{2}}+\|P-\bar{P}\|_{L^{2}})^{\frac{\nu}{2+\nu}}\|\rho^{\frac{1}{2}}\dot{w}_2\|_{L^{2}}^{\frac{2}{2+\nu}}\\
& \quad + C(\|\rho^{\frac{1}{2}}\dot{w}_2\|_{L^{2}}\|\rho^{\frac{1}{2+\nu}}u\|_{L^{2+\nu}}\|P-\bar{P}\|_{L^{2(2+\nu)/\nu}}+\|\rho^{\frac{1}{2}}\dot{w}_2\|_{L^{2}}\|\rho^{\frac{1}{2+\nu}}u\|_{L^{2+\nu}}\|\nabla w_2\|_{L^2})\\
& \quad +C(\|\nabla w_2\|_{L^{2}}+\|P-\bar{P}\|_{L^{2}})\|\nabla u\|_{L^2}+C(\|\rho^{\frac{1}{2}}\dot{w}_2\|_{L^{2}}\|\nabla u\|_{L^2}+\|\nabla w_2\|_{L^2}\|\nabla u\|_{L^2})\\
&\quad+C\|P-\bar{P}\|_{L^{2}}\|\nabla u\|_{L^2}+C\|\nabla u\|_{L^{2}}\|\nabla w_2\|_{L^2}\\
& \leq C\|\rho^{\frac{1}{2}}\dot{w}_2\|_{L^{2}}^{\frac{4+\nu}{2+\nu}}(\|\nabla w_2\|_{L^{2}}+\|P-\bar{P}\|_{L^{2}})^{\frac{\nu}{2+\nu}}+\frac{1}{4}\|\rho^{\frac{1}{2}}\dot{w}_2\|_{L^{2}}^2\\
&\quad+C(\|\nabla w_2\|_{L^2}^2+\|\nabla u\|_{L^2}^2+\|P-\bar{P}\|_{L^2}^{\frac{\nu}{2+\nu}}) \\
& \leq\frac{1}{2}\|\rho^{\frac{1}{2}}\dot{w}_2\|_{L^{2}}^{2}+C(\|\nabla w_2\|_{L^{2}}^{2}+\|\nabla u\|_{L^{2}}^{2}+\|P-\bar{P}\|_{L^{2}}^{\frac{\nu}{2+\nu}}),
\ea\ee
where we have taken advantage of \eqref{xbh2}, \eqref{xbh4}, \eqref{xuv1} and Young's inequality.
So
\be \la{xbh14} \ba
&\left((\lambda+2\mu)\int(\div w_2)^{2}dx+\mu\int(\curl w_2)^{2}dx-2\int P\div w_2dx-2\int_{\partial\Omega}(\kappa-\vartheta) |w_2|^2ds\color{black}\right)_t \\
&\quad +\int\rho|\dot{w}_2|^{2}dx\le C\left(\|\nabla w_2\|_{L^{2}}^{2}+\|\nabla u\|_{L^{2}}^{2}+\|P-\bar{P}\|_{L^{2}}^{\frac{\nu}{2+\nu}}\right) .
\ea\ee
By Gronwall's inequality, Lemma \ref{le2}, \eqref{th00jl} and \eqref{xbh6}, one has
\be \la{xbh15} \ba
\sup_{0\le t\le \si(T)}\|\nabla w_2\|_{L^{2}}^{2}+\int_0^{\sigma(T)}\int\rho|\dot{w}_2|^{2}dxdt\leq C,
\ea\ee
where we have used the inequality
$\|\nabla w_2\|_{L^2}\leq C(\|\div w_2\|_{L^2}+\|\curl w_2\|_{L^2})$, since $w_2\cdot n=0$ on $\partial\Omega$.
Taking $w_{10}=u_0 $ , then $w_1+w_2=u.$ \eqref{uv1} is obtained from (\ref{xbh12}) and (\ref{xbh15}) directly.

Next, we prove \eqref{uv2}. Taking $m=2-s$ in \eqref{ax7} and integrating over $(0,\sigma(T))$ instead of $(0,T)$, together with \eqref{uv1}, we get
\be \la{xbh16} \ba
&\sup_{0\le t\le  \si(T)}t^{2-s}\int\n|\dot u|^2dx+\int_0^{ \si(T)}t^{2-s}\|\nabla\dot{u}\|_{L^2}^2dt \\
& \le  C\int_0^{\si(T)}t^{2-s}\|\na u\|_{L^4}^4dt+C(\hat{\rho},M) . \ea\ee

By \eqref{h18}, \eqref{th00jl} and \eqref{uv1}, we get
\be \la{xbh17}\ba
&\int_0^{\sigma(T)}t^{2-s}\|\nabla u\|_{L^{4}}^{4}dt \\
& \le C\int_0^{\sigma(T)}t^{2-2s}\|\rho^{\frac{1}{2}}\dot{u}\|_{L^{2}}^{2}(\|\nabla u\|_{L^{2}}^{2}+\|P-\bar{P}\|_{L^{2}}^{2})dt+C\int_0^{\sigma(T)}t^{2-s}\|\nabla u\|_{L^{2}}^{4}dt\\
&\quad+C\int_0^{\sigma(T)}t^{2-s}\|P-\bar{P}\|_{L^{4}}^{4}dt \\
& \le C\int_0^{\sigma(T)}t^s(t^{1-s}\|\rho^{\frac{1}{2}}\dot{u}\|_{L^{2}}^{2})(t^{1-s}\|\nabla u\|_{L^{2}}^{2})dt+C\int_0^{\sigma(T)}t^{1-s}\|\nabla u\|_{L^{2}}^{2}t\|\nabla u\|_{L^{2}}^{2}dt\\
& \quad+C\int_0^{\sigma(T)}(t^{1-s}\|\rho^{\frac{1}{2}}\dot{u}\|_{L^{2}}^{2})(t\|P-\bar{P}\|_{L^{2}}^{2})dt+C(\hat{\rho})\\
& \leq C(\hat{\rho}, M) .
\ea\ee
As a result,
\be \la{xbh18} \ba
&\sup_{0\le t\le  \si(T)}t^{2-s}\int\n|\dot u|^2dx+\int_0^{ \si(T)}t^{2-s}\|\nabla u\|_{L^{2}}^{2}dt\leq C(\hat{\rho}, M).
\ea\ee
\end{proof}
\begin{lemma}\la{le5} Suppose $(\n,u)$ is a smooth solution  of
   \eqref{a1}-\eqref{ch1}     on $\O \times (0,T] $ satisfying \eqref{zz1}. Then there exists a positive constant $C(\hat\rho)$ depending only  on $\mu,$  $\lambda,$ $a,$ $\gamma,$ $s,$ $\hat\rho,$ $\Omega$ and $M$
 such that
  \be\la{h27}
  A_1(T)+A_2(T)\le C_0^{1/3},
  \ee
provided $C_0\leq\varepsilon_0$.
   \end{lemma}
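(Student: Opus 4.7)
\textbf{Proof plan for Lemma \ref{le5}.} My strategy is to start from the two master inequalities in Lemma \ref{xcrle1} and bound the two remaining cubic/quartic velocity-gradient integrals by a power of $C_0$ strictly larger than $1/3$. Recall
\begin{align*}
A_1(T) &\le CC_0^{1/2}+C\int_0^T\sigma\|\nabla u\|_{L^3}^3\,dt,\\
A_2(T) &\le CC_0^{2/3}+CA_1(T)+C\int_0^T\sigma^2\|\nabla u\|_{L^4}^4\,dt.
\end{align*}
The natural tool to open up $\|\nabla u\|_{L^p}^p$ is the interpolation estimate \eqref{h18}, which gives
\begin{align*}
\|\nabla u\|_{L^3}^3 &\le C\|\rho\dot u\|_{L^2}\bigl(\|\nabla u\|_{L^2}^2+\|P-\bar P\|_{L^2}^2\bigr)+C\bigl(\|\nabla u\|_{L^2}^3+\|P-\bar P\|_{L^3}^3\bigr),\\
\|\nabla u\|_{L^4}^4 &\le C\|\rho\dot u\|_{L^2}^2\bigl(\|\nabla u\|_{L^2}^2+\|P-\bar P\|_{L^2}^2\bigr)+C\bigl(\|\nabla u\|_{L^2}^4+\|P-\bar P\|_{L^4}^4\bigr).
\end{align*}

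I would then estimate each resulting time integral by Cauchy--Schwarz or H\"older, putting one factor in $L^\infty_t$ (controlled by the defining supremum of $A_1$ or $A_2$, together with the a priori bound $A_1+A_2\le 2C_0^{1/3}$ from \eqref{zz1}) and the other in $L^1_t$ or $L^2_t$ (controlled by the energy inequality \eqref{a16} and the pressure decay \eqref{th00jl}). For instance, the typical "cross" term is handled by
\begin{align*}
\int_0^T\sigma\|\rho\dot u\|_{L^2}\|\nabla u\|_{L^2}^2\,dt
&\le\Bigl(\!\int_0^T\!\sigma\|\rho\dot u\|_{L^2}^2\,dt\Bigr)^{\!1/2}\!\!\Bigl(\sup_t\sigma\|\nabla u\|_{L^2}^2\Bigr)^{\!1/2}\!\!\Bigl(\!\int_0^T\!\|\nabla u\|_{L^2}^2\,dt\Bigr)^{\!1/2}\!\!\le CA_1(T)\,C_0^{1/2},
\end{align*}
and the pure pressure term uses $\|P-\bar P\|_{L^p}^p\le C\|P-\bar P\|_{L^2}^2$ followed by the second half of \eqref{th00jl}. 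The routine calculations show
$\int_0^T\sigma\|\nabla u\|_{L^3}^3\,dt\le CC_0^{1/2+\delta}$ for some $\delta>0$, hence $A_1(T)\le CC_0^{1/2}$.

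The delicate step is the product $\int_0^T\sigma^2\|\rho\dot u\|_{L^2}^2\|P-\bar P\|_{L^2}^2\,dt$ in the $L^4$ estimate, because the pressure is only bounded (not small) pointwise. The trick is to absorb one factor of $\sigma$ into the pressure via the time-weighted estimate $\sup_t\sigma\|P-\bar P\|_{L^2}^2\le CC_0^{1/2}$ from \eqref{th00jl}, leaving the other factor of $\sigma$ with $\|\rho\dot u\|_{L^2}^2$ to be integrated against $A_1$:
\begin{align*}
\int_0^T\sigma^2\|\rho\dot u\|_{L^2}^2\|P-\bar P\|_{L^2}^2\,dt\le \Bigl(\sup_t\sigma\|P-\bar P\|_{L^2}^2\Bigr)\!\!\int_0^T\sigma\|\rho\dot u\|_{L^2}^2\,dt\le CC_0^{1/2}\cdot 2C_0^{1/3}=CC_0^{5/6}.
\end{align*}
The remaining terms in $\int_0^T\sigma^2\|\nabla u\|_{L^4}^4\,dt$ are even easier: the $\|\nabla u\|_{L^2}^4$ term rewrites as $(\sigma\|\nabla u\|_{L^2}^2)^2$ and yields $CC_0^{4/3}$, while the pressure quartic collapses to $CC_0^{3/4}$.

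Assembling these estimates gives $\int_0^T\sigma^2\|\nabla u\|_{L^4}^4\,dt\le CC_0^{3/4}$, and combined with $A_1(T)\le CC_0^{1/2}$, we conclude
\begin{equation*}
A_1(T)+A_2(T)\le CC_0^{1/2}.
\end{equation*}
Since $1/2>1/3$, choosing $\varepsilon_0$ so that $C\varepsilon_0^{1/6}\le 1$ forces $CC_0^{1/2}\le C_0^{1/3}$ whenever $C_0\le\varepsilon_0$, which closes the bootstrap. The main obstacle, as described above, is exactly that delicate pressure--acceleration cross term: without the sharpened estimate $\sup_t\sigma\|P-\bar P\|_{L^2}^2\le CC_0^{1/2}$ from Lemma \ref{th00}, one could only get a bound of size $CC_0^{1/3}$ with an unfavorable constant and the bootstrap would fail.
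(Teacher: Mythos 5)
Your proposal is correct and closes the bootstrap, but you take a route that differs from the paper's in one notable respect. The paper splits the cubic integral $\int_0^T\sigma\|\nabla u\|_{L^3}^3\,dt$ over $[0,\sigma(T)]$ and $[\sigma(T),T]$: on the first interval it invokes the fractional time-weighted bound $\sup_{t\le\sigma(T)}t^{1-s}\|\nabla u\|_{L^2}^2\le K(\hat\rho,M)$ from Lemma \ref{zc1} (which requires the $H^s$ initial regularity and the $w_1+w_2$ decomposition), and on the second it uses Young's inequality to reduce to the quartic bound \eqref{h99}. You avoid that machinery entirely by applying a direct triple H\"older $\int f_1f_2f_3\le\|f_1\|_{L^2_t}\|f_2\|_{L^\infty_t}\|f_3\|_{L^2_t}$ with the factor $\sup_t\sigma\|\nabla u\|_{L^2}^2\le A_1(T)\le 2C_0^{1/3}$ supplied by the bootstrap hypothesis \eqref{zz1}, yielding $CA_1(T)C_0^{1/2}\le CC_0^{5/6}$. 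This is actually a cleaner argument for this particular lemma (and gives a marginally better exponent), though the paper still needs Lemma \ref{zc1} for the density estimate in Lemma \ref{le7}, so the $H^s$ hypothesis is not eliminated from the overall scheme. Your identification of the $\sigma^2\|\rho\dot u\|_{L^2}^2\|P-\bar P\|_{L^2}^2$ term as the delicate one, and the use of $\sup_t\sigma\|P-\bar P\|_{L^2}^2\le CC_0^{1/2}$ from \eqref{th00jl} to absorb one $\sigma$, matches the paper's \eqref{h99} exactly. One small arithmetic slip: the full quartic integral $\int_0^T\sigma^2\|\nabla u\|_{L^4}^4\,dt$ is bounded by $CC_0^{2/3}$, not $CC_0^{3/4}$, because the term $\int\sigma^2\|\rho\dot u\|_{L^2}^2\|\nabla u\|_{L^2}^2\,dt\le A_1(T)^2\le CC_0^{2/3}$ dominates; since $2/3>1/3$ this does not affect the conclusion $A_1+A_2\le CC_0^{1/2}$, and your choice $\varepsilon_0=C^{-6}$ still closes the argument.
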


\begin{proof} By (\ref{h18}),   \eqref{a16}, \eqref{th00jl} and  \eqref{zz1}, it is easy to check that
  \be\la{h99} \ba
  &  \int_0^{T}\sigma^2 \|\na u\|_{L^4}^4 dt\\
& \le  C \int_0^{T}\sigma^{2} \|\n^{\frac{1}{2}}  \dot u \|_{L^2}^2(\|\nabla u\|_{L^{2}}^{2}+\|P-\bar{P}\|_{L^{2}}^{2})dt +C\int_{0}^{T}\sigma^{2}\|\nabla u\|_{L^{2}}^{4}dt \\
&\quad+C\int_{0}^{T}\sigma^{2}\|P-\bar{P}\|_{L^{4}}^{4}dt \\
& \le C\int_{0}^{T}(\sigma\|\rho^{\frac{1}{2}}\dot{u}\|_{L^{2}}^{2})(\sigma\|\nabla u\|_{L^{2}}^{2})dt+C\int_{0}^{T}(\sigma\|\rho^{\frac{1}{2}}\dot{u}\|_{L^{2}}^{2})(\sigma\|P-\bar{P}\|_{L^{2}}^{2})dt+CC_0^{3/4}\\
& \le C\left[(A_1(T)+C_0^{1/2})\int_0^T\sigma\|\rho^{\frac{1}{2}}\dot{u}\|_{L^{2}}^{2}dt+C_0^{3/4}\right] \\
& \le C((A_1(T)+C_0^{1/2})A_1(T)+C_0^{3/4}) \\
& \le CC_0^{2/3},
 \ea \ee
which, together with \eqref{h14} and \eqref{h15} gives
\be\la{h991} \ba
A_1(T)+A_2(T)\leq C\left(C_0^{1/2}+\int_0^T\sigma\|\nabla u\|_{L^{3}}^{3}dt\right)   .
\ea \ee

In the following, we estimate $\int_0^T\sigma\|\nabla u\|_{L^{3}}^{3}dt .$

First, it follows from  \eqref{h18}, \eqref{a16} and \eqref{zz1} that

\be\la{h992} \ba
&\int_0^{\sigma(T)}\sigma\|\nabla u\|_{L^{3}}^{3}dt \\
& \le C\int_0^{\sigma(T)}\sigma\|\rho^{\frac{1}{2}}\dot{u}\|_{L^{2}}(\|\nabla u\|_{L^{2}}^{2}+\|P-\bar{P}\|_{L^{2}}^{2})dt+C\int_0^{\sigma(T)}\sigma\|\nabla u\|_{L^{2}}^{3}dt\\
&\quad+C\int_0^{\sigma(T)}\sigma\|P-\bar{P}\|_{L^{3}}^{3}dt\\
& \le C\int_0^{\sigma(T)}(t^{\frac{1-s}{2}}\|\nabla u\|_{L^{2}})\|\nabla u\|_{L^{2}}(\sigma\|\rho^{\frac{1}{2}}\dot{u}\|_{L^{2}}^{2})^{\frac{1}{2}}dt+CC_0^{3/4}\\
& \quad+C\int_0^{\sigma(T)}\sigma^{\frac{1}{2}}\|P-\bar{P}\|_{L^{2}}^{2}(\sigma\|\rho^{\frac{1}{2}}\dot{u}\|_{L^{2}}^{2})^{\frac{1}{2}}dt \\
& \le C\left(\sup_{0\le t\le \sigma(T)}t^{1-s}\|\nabla u\|_{L^{2}}^{2}\right)^{\frac{1}{2}}\left(\int_0^{\sigma(T)}\|\nabla u\|_{L^{2}}^{2}dt\right)^{\frac{1}{2}}\left(\int_0^{\sigma(T)}\sigma\|\rho\dot{u}\|_{L^{2}}^{2}dt\right)^{\frac{1}{2}} \\
& \quad+C\left(\int_0^{\sigma(T)}\sigma\|P-\bar{P}\|_{L^{2}}^{4}dt\right)^{\frac{1}{2}}\left(\int_0^{\sigma(T)}\sigma\|\rho\dot{u}\|_{L^{2}}^{2}dt\right)^{\frac{1}{2}}+CC_0^{3/4}\\
& \le CC_0^{2/3}.
\ea \ee

On the other hand, by Young's inequality, \eqref{a16} and \eqref{h99}, we get
\be\la{h993} \ba
&\int_{\sigma(T)}^T\sigma\|\nabla u\|_{L^{3}}^{3}dt \\
& \le \int_{\sigma(T)}^T\sigma\|\nabla u\|_{L^{4}}^{4}dt+\int_{\sigma(T)}^T\sigma\|\nabla u\|_{L^{2}}^{2}dt\\
& \le CC_0^{2/3}.
\ea \ee
Together with \eqref{h992} and \eqref{h993}, it follows from \eqref{h991} that \eqref{h27} is true, provided $C_0\leq\varepsilon_0\triangleq C^{-6}$.
\end{proof}

Motivated by previous study on the two-dimensional Stokes approximation equations \cite{hlx1} or \cite{lx}, we can proceed to derive a uniform (in time) upper bound for the density, which is the key quantity to get all the higher order estimates and thus to extend the classical solution globally.

\begin{lemma}\la{le7}
There exists a positive constant
   $\ve$
    depending    on  $\mu,$  $\lambda,$ $\ga,$  $a,$ $\hat\rho,$  $s,$ $\Omega$, and $M$  such that,
    if  $(\rho,u)$ is a smooth solution  of
   \eqref{a1}-\eqref{ch1}     on $\O \times (0,T] $
   satisfying \eqref{zz1} and the assumptions in Theorem \ref{th1}, then
      \be\la{lv102}\sup_{0\le t\le T}\|\n(t)\|_{L^\infty}  \le
\frac{7\hat \n }{4}  ,\ee
      provided $C_0\le \ve.$ Moreover, if $C_0\le \ve$, then there exists some positive constant $\tilde{C}(T)$ depending only on $T,$ $\mu,$ $\lambda,$ $\ga,$  $a,$ $\hat\rho,$  $s,$ $\Omega$, $M$ and $\vartheta$ such that for $(x,t)\in\Omega\times(0,T)$
\be\la{le7xz}\rho(x,t)\geq\tilde{C}(T)\inf_{x\in\Omega}\rho_0(x).\ee

   \end{lemma}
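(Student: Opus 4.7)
The plan is to control $\rho$ along Lagrangian trajectories via Zlotnik's inequality (Lemma \ref{le1}). Let $X(t;x_0)$ denote the characteristic of $u$ through $x_0$ and set $\theta(t)=\rho(X(t;x_0),t)$. Using the effective viscous flux $F=(\lambda+2\mu)\div u-(P-\bar P)$ together with mass conservation $\bar\rho(t)\equiv\bar\rho_0$, the continuity equation $(\ref{a1})_1$ rewrites as
\begin{equation*}
\theta'(t)+\frac{\theta(P(\theta)-P(\bar\rho_0))}{\lambda+2\mu}=-\frac{\theta F(X(t;x_0),t)}{\lambda+2\mu}+\frac{\theta(\bar P(t)-P(\bar\rho_0))}{\lambda+2\mu},
\end{equation*}
which is in Zlotnik form $y'=g(y)+b'(t)$ with $g(\theta)=-\frac{\theta(P(\theta)-P(\bar\rho_0))}{\lambda+2\mu}$. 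Since $\gamma>1$, $g$ is eventually strictly decreasing with $g(\infty)=-\infty$; setting $\hat\zeta=\hat\rho$ fixes the constant $N_1:=-g(\hat\rho)=\frac{a\hat\rho(\hat\rho^\gamma-\bar\rho_0^\gamma)}{\lambda+2\mu}$.

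The heart of the proof is verifying the Zlotnik hypothesis $b(t_2)-b(t_1)\le N_0+N_1(t_2-t_1)$ with $N_0=C(\hat\rho)C_0^\alpha$. Since $\theta\le 2\hat\rho$ by \eqref{zz1} and the residual term involving $\bar P-P(\bar\rho_0)$ is dominated by $\|P-\bar P\|_{L^2}$, which is small in $L^1_t$ by \eqref{th00jl}, it suffices to bound $\int_{t_1}^{t_2}\|F\|_{L^\infty}dt$. The slip condition $u\cdot n=0$ gives $\overline{\div u}=0$, hence $\bar F=0$, so Gagliardo-Nirenberg (with $C_2=0$) yields $\|F\|_{L^\infty}\le C\|F\|_{L^2}^{1-2/q}\|\nabla F\|_{L^q}^{2/q}$ for any $q>2$. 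Inserting the Lemma \ref{le3} bounds for $\|F\|_{L^2}$ and $\|\nabla F\|_{L^q}$ in terms of $\|\rho\dot u\|_{L^2}$, $\|\rho\dot u\|_{L^q}$, $\|\nabla u\|_{L^2}$, and $\|P-\bar P\|$, I would split the time integration into $[0,\sigma(T)]$ and $[\sigma(T),T]$. On the first interval, the weighted estimates of Lemma \ref{zc1} combined with H\"older in $t$ give $\int_0^{\sigma(T)}\|F\|_{L^\infty}dt\le CC_0^{\alpha}$, a pure contribution to $N_0$. On the second interval, Proposition \ref{pr1}'s quantitative bound $A_1(T)+A_2(T)\le C_0^{1/3}$ together with Lemma \ref{le2} and H\"older yields an estimate of the form $CC_0^{\beta}+CC_0^{\beta'}(t_2-t_1)$; choosing $\ve$ small enough ensures the additive piece fits in $N_0\le 3\hat\rho/4$ and the linear coefficient is $\le N_1$. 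Zlotnik then delivers $\theta(t)\le\max(\hat\rho,\hat\zeta)+N_0=\hat\rho+N_0\le 7\hat\rho/4$.

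For the lower bound \eqref{le7xz}, dividing the mass equation by $\rho$ gives $D_t\log\rho=-\div u$, so
\begin{equation*}
\rho(X(t;x_0),t)\ge\rho_0(x_0)\exp\Bigl(-\int_0^T\|\div u(\cdot,s)\|_{L^\infty}\,ds\Bigr).
\end{equation*}
The Beale-Kato-Majda type inequality of Lemma \ref{le9}, combined with the higher-order estimates of Section 4 (which rely only on the upper bound on $\rho$ just proved), yields $\int_0^T\|\div u\|_{L^\infty}dt\le C(T)$, whence $\tilde C(T)=e^{-C(T)}$ works. The step I expect to be hardest is the $[\sigma(T),T]$ estimate: one must carefully H\"older-interpolate the factors $\|\rho\dot u\|_{L^2}$, $\|\rho\dot u\|_{L^q}$, $\|\nabla u\|_{L^2}$ and $\|P-\bar P\|$ entering $\|F\|_{L^\infty}$ so that their combined $C_0$-powers produce a linear-in-time coefficient not exceeding the fixed $N_1$; this is precisely the place where the quantitative smallness $A_1+A_2\le C_0^{1/3}$ afforded by Proposition \ref{pr1} (rather than a mere bound $\le 2C_0^{1/3}$) is essential.
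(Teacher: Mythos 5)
Your upper-bound argument is essentially the paper's: write $(\ref{a1})_1$ in the form $D_t\rho = g(\rho)+b'(t)$ with $b(t)=-\frac{1}{\lambda+2\mu}\int_0^t\rho F\,dt$, split $[0,T]$ into $[0,\sigma(T)]$ and $[\sigma(T),T]$, and bound $\int\|F\|_{L^\infty}dt$ on the first piece using the time-weighted estimates of Lemma~\ref{zc1} and on the second piece using Lemma~\ref{le2} together with \eqref{zz1}. Your reformulation with the autonomous $g(\theta)=-\theta\big(P(\theta)-P(\bar\rho_0)\big)/(\lambda+2\mu)$ and a drift term in $b'$ is a cosmetic variant (the drift $\theta(\bar P(t)-P(\bar\rho_0))/(\lambda+2\mu)$ is controlled pointwise in $t$ by $CC_0^{1/2}$ via the energy estimate, not by ``$\|P-\bar P\|_{L^2}$ being small in $L^1_t$'' as you wrote — it contributes a small $N_1$, not $N_0$). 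One conceptual error: you say the improved bound $A_1+A_2\le C_0^{1/3}$ from Proposition~\ref{pr1} (rather than the \emph{hypothesis} $A_1+A_2\le 2C_0^{1/3}$ of \eqref{zz1}) is ``essential.'' It is not, and invoking the conclusion of Proposition~\ref{pr1} inside the proof of Lemma~\ref{le7} would be circular, since Lemma~\ref{le7} is one of the two ingredients establishing Proposition~\ref{pr1}. The factor of $2$ is absorbed by the generic constant $C$; all the $C_0$-powers are unchanged.

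The lower bound \eqref{le7xz} is where there is a genuine gap. You derive $\rho(X(t),t)\ge\rho_0(x_0)\exp\big(-\int_0^T\|\div u\|_{L^\infty}dt\big)$ and propose to bound $\int_0^T\|\div u\|_{L^\infty}dt$ via the Beale–Kato–Majda inequality and the higher-order estimates of Section~4. But the constants in Section~4 (in particular, those behind $\int_0^T\|\nabla u\|_{L^\infty}dt\le C$ in Lemma~\ref{xle1}) explicitly depend on $T$, $\|g\|_{L^2}$, $\|\nabla u_0\|_{H^1}$, $\|\rho_0\|_{W^{2,q}}$ and $\|P(\rho_0)\|_{W^{2,q}}$. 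The statement of Lemma~\ref{le7}, however, requires $\tilde C(T)$ to depend only on $T$, $\mu$, $\lambda$, $\gamma$, $a$, $\hat\rho$, $s$, $\Omega$, $M$ and $\vartheta$. Your route cannot deliver this dependency. The paper instead applies Gronwall to $D_t\rho^{-1}\le C\rho^{-1}(|F|+1)$ and reuses exactly the bound on $\int_0^T\|F\|_{L^\infty}dt$ already obtained in \eqref{xbh19}–\eqref{xbh20} for the upper bound; that bound carries only the allowed dependencies. Since $\div u=\frac{1}{\lambda+2\mu}(F+P-\bar P)$, you could equivalently close your $\|\div u\|_{L^\infty}$ estimate from the same $\|F\|_{L^\infty}$ control — the detour through Lemma~\ref{le9} and Section~4 is both unnecessary and damaging to the stated dependency.
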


\begin{proof}
  First, we notice that $(\ref{a1})_1$  can be rewritten as
  \be \la{z.3}  D_t \n=g(\n)+b'(t), \ee where \bnn
D_t\n\triangleq\n_t+u \cdot\nabla \n ,\quad
g(\n)\triangleq-\frac{\rho(P-\bar{P})}{\lambda+2\mu}  ,
\quad b(t)\triangleq -\frac{1}{\lambda+2\mu} \int_0^t\rho F dt. \enn
For $t\in[0,\sigma(T)],$ using the Gagliardo-Nirenberg's inequality  \eqref{g2} for $q=2$ and $r=12,$  we deduce from \eqref{uv1}, \eqref{h19} and Lemma \ref{uup1} that for all $0\leq t_1\leq t_2\leq\sigma(T)$,
\be \la{xbh19} \ba
&|b(t_2)-b(t_1)|  \\
& =\frac{1}{\lambda+2\mu}\left|\int_{t_1}^{t_2}\rho Fdt\right|\\
& \le C(\hat{\rho})\int_0^{\sigma(T)}\|F(\cdot,t)\|_{L^{\infty}}dt\\
& \le C(\hat{\rho})\int_0^{\sigma(T)}\|F\|_{L^{2}}^{5/11}\|\nabla F\|_{L^{12}}^{6/11}dt \\
& \le C(\hat{\rho})\int_0^{\sigma(T)}(\|\nabla u\|_{L^2}^{5/11}+\|P-\bar{P}\|_{L^2}^{5/11})(\|\nabla\dot{u}\|_{L^2}^{6/11}+\|\nabla u\|_{L^2}^{12/11}+\|\nabla u\|_{L^2}^{6/11})dt\\
& \quad+C(\hat{\rho})\int_0^{\sigma(T)}(\|\nabla u\|_{L^2}^{5/11}+\|P-\bar{P}\|_{L^2}^{5/11})\|P-\bar{P}\|_{L^2}^{6/11}dt\\
& \le C(\hat{\rho})\int_0^{\sigma(T)}(\|\nabla u\|_{L^2}^{5/11}\|\nabla\dot{u}\|_{L^2}^{6/11}+\|\nabla u\|_{L^2}^{17/11}+\|\nabla u\|_{L^2}+\|\nabla u\|_{L^2}^{5/11}\|P-\bar{P}\|_{L^2}^{6/11})dt\\
& \quad+C(\hat{\rho})\int_0^{\sigma(T)}(\|P-\bar{P}\|_{L^2}^{5/11}(\|\nabla\dot{u}\|_{L^2}^{6/11}+\|\nabla u\|_{L^2}^{12/11}+\|\nabla u\|_{L^2}^{6/11})+\|P-\bar{P}\|_{L^2})dt\\
& \le C(\hat{\rho})\int_0^{\sigma(T)}(t^{1-s}\|\nabla u\|_{L^2}^2)^{5/22}t^{-\frac{17-5s}{22}}(t^2\|\nabla\dot{u}\|_{L^2}^2)^{\frac{3}{11}}dt+C(\hat{\rho})\left(\int_0^{\sigma(T)}\|\nabla u\|_{L^2}^2dt\right)^{\frac{17}{22}}\\
&\quad+C(\hat{\rho})\left(\int_0^{\sigma(T)}\|\nabla u\|_{L^2}^2dt\right)^{3/11}+\int_0^{\sigma(T)}(t^{1-s}\|\nabla u\|_{L^2}^2)^{5/22}t^{-\frac{11-5s}{22}}(t\|P-\bar{P}\|_{L^2}^2)^{\frac{3}{11}}dt\\
& \quad+C(\hat{\rho})\left(\int_0^{\sigma(T)}t^2\|\nabla\dot{u}\|_{L^2}^{2}dt\right)^{3/11}\left(\int_0^{\sigma(T)}t^{-\frac{3}{4}}dt\right)^{8/11}+CC_0^{1/4}\\
& \le C(\hat{\rho},M)C_0^{1/11},
\ea\ee
Choosing $N_1=0,\,N_0=C(\hat{\rho},M)C_0^{1/11},\,\zeta_0=\hat{\rho}$ in Lemma  \ref{le1} and using \eqref{z.3}, \eqref{xbh19} give
   \be\la{a103}\sup_{t\in
[0,\si(T)]}\|\n\|_{L^\infty} \le \hat\rho
+C(\hat\rho,M)C_0^{1/11}\le\frac{3 \hat\n  }{2},\ee
 provided $$C_0\le \ve_1\triangleq\min\{1, (\hat\rho/(2C(\hat\rho,M)))^{11}\}. $$

On the other hand, for $t\in[\sigma(T),T],\,\,\sigma(T)\le t_1\le t_2\le T ,$ it follows from Lemmas \ref{le3}, \ref{uup1}, \ref{le2} and \eqref{zz1} that
\be \la{xbh20} \ba
&|b(t_2)-b(t_1)|  \\
&\le C(\hat{\rho})\int_{t_1}^{t_2}\|F\|_{L^{\infty}}dt \\
&\le \frac{a\hat{\rho}^{\gamma+1}}{2(\lambda+2\mu)}(t_2-t_1)+C(\hat{\rho})\int_{t_1}^{t_2}\|F\|_{L^{\infty}}^{3}dt \\
&\le \frac{a\hat{\rho}^{\gamma+1}}{2(\lambda+2\mu)}(t_2-t_1)+C(\hat{\rho})\int_{t_1}^{t_2}\|F\|_{L^2}\|\nabla F\|_{L^4}^{2}dt\\
& \le \frac{a\hat{\rho}^{\gamma+1}}{2(\lambda+2\mu)}(t_2-t_1)+C(\hat{\rho})C_0^{1/6}\int_{t_1}^{t_2}\|\nabla\dot{u}\|_{L^2}^2dt+CC_0^{1/2} \\
& \le \frac{a\hat{\rho}^{\gamma+1}}{\lambda+2\mu}(t_2-t_1)+C(\hat{\rho})C_0^{1/2}  ,
\ea\ee
which implies that one can choose $N_1$ and $N_0$ in \eqref{a100} as $N_1=\frac{a\hat{\rho}^{\gamma+1}}{\lambda+2\mu},\,\,N_0=C(\hat{\rho})C_0^{1/2}.$

Hence, we set $\zeta_0= \frac{3\hat{\rho}}{2} $ in (\ref{a101}) since for all $  \zeta \geq\zeta_0=\frac{3\hat{\rho}}{2},$
$$ g(\zeta)=-\frac{ a\zeta^{\gamma+1}-\zeta\bar{P}}{\lambda+2\mu}\le -\frac{a\hat{\rho}^{\gamma+1}}{2(\lambda+2\mu)}= -N_1. $$
As a result, Lemma \ref{le1} and \eqref{xbh20} lead to
\be\la{a102} \sup_{t\in
[\si(T),T]}\|\n\|_{L^\infty}\le  \frac{3\hat{\rho}}{2}
  +C(\hat{\rho})C_0^{1/2} \le
\frac{7\hat\rho }{4} ,\ee provided
\be \la{xbh21} \ba C_0\le
\ve\triangleq\min\{\ve_0,\ve_1, \ve_2 \}, \quad\mbox{ for
}\ve_2\triangleq ({ \hat\rho }/{4C(\hat\rho) })^{2}.
\ea\ee
Combining (\ref{a103}) and (\ref{a102}), we get \eqref{lv102}.

It remains to prove \eqref{le7xz}. If $\inf\limits_{x\in\Omega}\rho_0(x)=0,$ \eqref{le7xz} clearly holds. Assume that $\inf\limits_{x\in\Omega}\rho_0(x)>0,$
by \eqref{z.3}, We have
\bnn\ba
(\lambda+2\mu)D_t\rho+\rho(P-\bar{P})+\rho F=0.
\ea\enn
A simple computation shows
\bnn\ba
(\lambda+2\mu)D_t\rho^{-1}-\rho^{-1}(P-\bar{P}+F)=0.
\ea\enn
which yields that
\bnn\ba
D_t\rho^{-1}\leq C\rho^{-1}(|F|+1).
\ea\enn
Combining this with Gronwall's inequality, \eqref{xbh19} and \eqref{xbh20} gives \eqref{le7xz} and finishes
the proof of Lemma \ref{le7}.
\end{proof}
\section{\la{se5} A priori estimates (II): higher order estimates }

From now on, assume that the initial energy $C_0$  always meets the condition (\ref{xbh21}) and the positive
constant $C $ may depend on \bnn  T,\,\, \| g\|_{L^2},  \,\,\|\na u_0\|_{H^1},\,\,
    \|\rho_0\|_{W^{2,q}}  ,  \,\, \|P(\rho_0)\|_{W^{2,q}} , \,\,\enn
besides $\mu$, $\lambda$, $a$, $\ga$, $\hat\rho$,  $s,$  $\Omega$, and $
M,$ where $g\in L^2(\Omega)$ is as in \eqref{dt3}.

Next, we will derive important estimates on the spatial gradient of
the smooth solution $(\rho,u)$.

\begin{lemma}\la{xle1}
For $2\leq p<\infty$, there exists a positive constant $C,$ such that
    \be\label{cxb2}
\sup_{0\le t\le T}(\|\rho^{\frac{1}{2}}\dot{u}\|_{L^2}+\|u\|_{H^2}+\|\rho\|_{W^{1,p}}) + \int_0^T\|\nabla u\|_{L^\infty}+\|\nabla^{2}u\|_{L^p}^2+\|\dot{u}\|_{H^1}^{2}dt\le C,\ee
\end{lemma}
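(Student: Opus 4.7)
\medskip
\noindent\textbf{Proof plan.}
The plan is to bootstrap from the a priori bounds of Section 3 (in particular $\rho\le 2\hat\rho$, $A_1(T)+A_2(T)\le CC_0^{1/3}$, and the estimates of Lemma \ref{le3} and Lemma \ref{th00}) to uniform higher-order bounds. The first step is to remove the $\sigma$-weights from the $\dot u$ estimates. I would re-run the integration-by-parts argument leading to \eqref{ax7}, but now with $m=0$, on the full interval $[0,T]$. The compatibility condition \eqref{dt3} yields $\sqrt{\rho_0}\,\dot u(\cdot,0)=-g\in L^2$, which supplies the initial datum that was previously swept into the $\sigma^{m-1}$ factor. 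All boundary terms are treated exactly as in the proof of Lemma \ref{xcrle1}, so the differential inequality one arrives at is schematically
\be\label{plan1}\ba
\frac{d}{dt}\|\sqrt\rho\,\dot u\|_{L^2}^2+\|\nabla\dot u\|_{L^2}^2
\le C(1+\|\nabla u\|_{L^2}^2)\|\sqrt\rho\,\dot u\|_{L^2}^2+ h(t),
\ea\ee
where $h\in L^1(0,T)$ thanks to Lemmas \ref{le2}, \ref{th00}, \ref{le5} (used to bound $\int_0^T\|\nabla u\|_{L^4}^4dt$ via \eqref{h18} and the already-established $L^2_t$ bounds on $\|P-\bar P\|_{L^2}^2$ and $\sigma\|\rho\dot u\|_{L^2}^2$). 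Gronwall closes \eqref{plan1} and gives $\sup_{[0,T]}\|\sqrt\rho\,\dot u\|_{L^2}^2+\int_0^T\|\nabla\dot u\|_{L^2}^2\,dt\le C$. Combined with Lemma \ref{uup1}, this delivers $\int_0^T\|\dot u\|_{H^1}^2\,dt\le C$ immediately.

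Given this, the $H^2$-bound on $u$ and the $L^2_t\bigl(W^{2,p}\bigr)$-bound follow from the Lamé estimate \eqref{tb101}: $\|\nabla^2 u\|_{L^p}\le C(\|\rho\dot u\|_{L^p}+\|P-\bar P\|_{W^{1,p}}+\|\nabla u\|_{L^2})$, where $\|\rho\dot u\|_{L^p}\le C\|\dot u\|_{H^1}\in L^2(0,T)$ by Sobolev and Lemma \ref{uup1}, and $\|P-\bar P\|_{W^{1,p}}\le C(1+\|\nabla\rho\|_{L^p})$. Thus everything reduces to bounding $\|\nabla\rho\|_{L^p}$. Differentiating \eqref{a1}$_1$, multiplying by $p|\nabla\rho|^{p-2}\nabla\rho$ and integrating yields
\be\label{plan2}\ba
\frac{d}{dt}\|\nabla\rho\|_{L^p}\le C\bigl(1+\|\nabla u\|_{L^\infty}\bigr)\|\nabla\rho\|_{L^p}+C\bigl(\|\rho\dot u\|_{L^p}+1\bigr).
\ea\ee
The Beale--Kato--Majda-type estimate of Lemma \ref{le9} gives, with $q>2$,
\bnn
\|\nabla u\|_{L^\infty}\le C\bigl(\|\div u\|_{L^\infty}+\|\curl u\|_{L^\infty}\bigr)\log\bigl(e+\|\nabla^2 u\|_{L^q}\bigr)+C\|\nabla u\|_{L^2}+C,
\enn
which couples \eqref{plan2} with the Lamé bound through the logarithm of $\|\nabla\rho\|_{L^p}$.

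The main obstacle, therefore, is the standard-but-delicate logarithmic Gronwall closure of \eqref{plan2}. I would handle it by first showing $\int_0^T\bigl(\|\div u\|_{L^\infty}+\|\curl u\|_{L^\infty}\bigr)dt<\infty$: writing $\div u=(F+P-\bar P)/(\lambda+2\mu)$ and applying Gagliardo--Nirenberg together with \eqref{h19}--\eqref{hh20}, one bounds $\|F\|_{L^\infty}+\|\curl u\|_{L^\infty}$ by an interpolation between $\|F\|_{L^2}+\|\curl u\|_{L^2}$ and $\|\nabla F\|_{L^p}+\|\nabla \curl u\|_{L^p}\le C(\|\rho\dot u\|_{L^p}+\|\nabla u\|_{L^p})$, the latter lying in a suitable $L^r(0,T)$ with $r>1$ via Step 1 and Sobolev embedding. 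Set $y(t)\triangleq\log\bigl(e+\|\nabla\rho(t)\|_{L^p}\bigr)$; then dividing \eqref{plan2} by $(e+\|\nabla\rho\|_{L^p})\log(e+\|\nabla\rho\|_{L^p})$ after substituting the BKM bound produces a linear inequality for $y$ whose forcing is integrable on $[0,T]$, so Gronwall yields $y\in L^\infty(0,T)$, i.e.\ $\|\nabla\rho\|_{L^p}\le C$. Plugging this back into \eqref{tb101} gives $\int_0^T\|\nabla^2 u\|_{L^p}^2dt\le C$ and $\sup_t\|u\|_{H^2}\le C$; re-substituting into the BKM inequality yields $\int_0^T\|\nabla u\|_{L^\infty}dt\le C$. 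All the assertions of \eqref{cxb2} then follow.
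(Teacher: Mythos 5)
Your proposal follows essentially the same route as the paper's proof: remove the $\sigma$-weights by running the $m=0$ version of the $\dot u$-energy identity with the compatibility condition supplying $\sqrt{\rho_0}\,\dot u(\cdot,0)=\sqrt{\rho_0}\,g\in L^2$, then close a logarithmic Gronwall for $\|\nabla\rho\|_{L^p}$ using the transport equation plus the Beale--Kato--Majda inequality of Lemma \ref{le9}, with $\|\rho\dot u\|_{L^p}\le C\|\dot u\|_{H^1}$ fed in through Lemma \ref{uup1}. The main structure, the role of \eqref{tb101}, the coupling between $\|\nabla^2 u\|_{L^q}$ and $\|\nabla\rho\|_{L^q}$, and the final Gronwall with an $L^1(0,T)$ coefficient all match the paper's argument.

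One precise prerequisite is glossed over, though: before you can make the $m=0$ energy estimate for $\dot u$ uniform up to $t=0$, you first need the \emph{unweighted} bound $\sup_{0\le t\le T}\|\nabla u\|_{L^2}^2+\int_0^T\int\rho|\dot u|^2\,dxdt\le C$, and this is what controls $\int_0^T\|\nabla u\|_{L^4}^4\,dt$ through \eqref{h18}. The a priori bound $A_1(T)\le C_0^{1/3}$ from Lemma \ref{le5} (which you cite) gives only the $\sigma$-weighted versions $\sup\sigma\|\nabla u\|_{L^2}^2$ and $\int\sigma\rho|\dot u|^2$, which degenerate near $t=0$. The paper fills this exact gap by taking $s=1$ in Lemma \ref{zc1}'s estimate \eqref{uv1} (valid since $u_0\in H^2\subset H^1$), which kills the weight $t^{1-s}$ on $[0,\sigma(T)]$; combining with $A_1$ on $[\sigma(T),T]$ then gives the unweighted bounds. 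Without this intermediate step --- or an equivalent $m=0$ version of the $A_1$-type estimate starting from $\|\nabla u_0\|_{L^2}$ --- the chain ``$\Rightarrow \int_0^T\|\nabla u\|_{L^4}^4\,dt<\infty\Rightarrow$ Gronwall for $\|\sqrt\rho\,\dot u\|_{L^2}^2$'' does not close. You should make this step explicit, since Lemma \ref{le5} alone is not strong enough near $t=0$.

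Also, a small notational slip: dividing \eqref{plan2} by $(e+\|\nabla\rho\|_{L^p})\log(e+\|\nabla\rho\|_{L^p})$ produces a differential inequality for $\log y$ rather than for $y=\log(e+\|\nabla\rho\|_{L^p})$; the paper (and the cleaner version of your argument) divides only by $(e+\|\nabla\rho\|_{L^p})$ to obtain a linear inequality for $y$ directly, with an $L^1(0,T)$ coefficient $C(1+\|\nabla\dot u\|_{L^2}+\|\nabla u\|_{L^2}^2)$. Either choice closes, but the extra division obscures the argument.
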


\begin{proof} Taking $s=1$ in (\ref{uv1}) along with
(\ref{h27}) gives
     \be\la{cxb3}
  \sup_{t\in[0,T]}\|\nabla u\|_{L^2}^2 + \int_0^{T}\int\rho|\dot{u}|^2dxdt
  \le C ,
  \ee
Combing  \eqref{cxb3} and \eqref{h18}, we obtain
\be\la{cxb4}
 \int_0^{T}\int\|\nabla u\|_{L^4}^4dt\le C.
\ee
Choosing $m=0$ in \eqref{ax401},  we have
\be\la{cxb5} \ba
&\left(\|\rho^{\frac{1}{2}}\dot{u}\|_{L^2}^2\right)_t+(\lambda+2\mu)\|\div\dot{u}\|_{L^2}^2+\mu\|\curl\dot{u}\|_{L^2}^2\\
& \leq-\left(2\int_{\partial\Omega}\kappa  F|u|^{2}ds\right)_t +C\|\rho^{\frac{1}{2}}\dot{u}\|_{L^2}^2\|\nabla u\|_{L^2}^2+C\|\rho^{\frac{1}{2}}\dot{u}\|_{L^2}^2\\
& \quad +C(\|\nabla u\|_{L^2}^2+\|P-\bar{P}\|_{L^2}^2)+C\|\nabla u\|_{L^4}^4
 .\ea\ee

Considering the compatibility condition \eqref{dt3}, we define
\be \la{cxb6}\sqrt{\n} \dot u(x,t=0)=\sqrt{\rho_{0}}g.\ee
Integrating \eqref{cxb5} over $[0,T],$ it follows from \eqref{tb11}, \eqref{kfu1} and \eqref{cxb6} that
\be \la{cxb7}\ba
&\|\rho^{\frac{1}{2}}\dot{u}\|_{L^2}^{2}+ \int_0^T\|\nabla\dot{u}\|_{L^2}^2dt  \\
&\le C\|\nabla u\|_{L^2}^4+C\|\nabla u\|_{L^2}^2+C\int_0^T\|\rho^{\frac{1}{2}}\dot{u}\|_{L^2}^2\|\nabla u\|_{L^2}^2dt+C\int_0^T\|\rho^{\frac{1}{2}}\dot{u}\|_{L^2}^2dt\\
& \quad +C\int_0^T(\|\nabla u\|_{L^2}^2+\|\nabla u\|_{L^2}^4+\|P-\bar{P}\|_{L^2}^2)dt+C\int_0^T\|\nabla u\|_{L^4}^4dt+C,
\ea\ee
together with \eqref{cxb3} and \eqref{cxb4}, we get
\be \la{cxb8}\ba
\sup_{t\in[0,T]}\|\rho^{\frac{1}{2}}\dot{u}\|_{L^2}^{2}+\int_0^T\|\nabla\dot{u}\|_{L^2}^{2}dt\le C  .
\ea\ee
In the following, we get \eqref{cxb2} by using Lemma 5 as in
\cite{hlx}. For $ p\geq 2,$ a simple computation shows that \bnnn \ba
& (|\nabla\rho|^p)_t + \text{div}(|\nabla\rho|^pu)+ (p-1)|\nabla\rho|^p\text{div}u  \\
 &+ p|\nabla\rho|^{p-2}(\nabla\rho)^t \nabla u (\nabla\rho) +
p\rho|\nabla\rho|^{p-2}\nabla\rho\cdot\nabla\text{div}u = 0.\ea
\ennn
As a result, \be\la{cxb9}\ba
\partial_t\norm[L^p]{\nabla\rho}&\le
 C(1+\norm[L^{\infty}]{\nabla u} )
\norm[L^p]{\nabla\rho} +C\|\na^2u\|_{L^p}\\ &\le
 C(1+\norm[L^{\infty}]{\nabla u} )
\norm[L^p]{\nabla\rho} +C\|\n\dot u\|_{L^p}, \ea\ee
due to
\be\la{cxb10}\ba
\|\na^2 u\|_{L^p}\le   C\left(\|\n\dot u\|_{L^p}+ \|\nabla
P \|_{L^p}\right),
\ea\ee
which follows from the standard
$L^p$-estimate for the following elliptic system:
\bnn\begin{cases} -\mu\Delta
u-(\mu+\lambda)\na {\rm div}u=-\n \dot u-\na P, &x\in\Omega,\\ u\cdot n=0, \,\,\curl u=2(\kappa-\vartheta) u\cdot\omega, &x\in\partial\Omega .
\end{cases}\enn
We deduce from Gagliardo-Nirenberg's inequality, \eqref{cxb3} and \eqref{h19} that, for any $q>2$,
\be\la{cxb11}\ba
&\|\div u\|_{L^\infty}+\|\curl u\|_{L^\infty}\\
&\leq C(\|F\|_{L^\infty}+\|P-\bar{P}\|_{L^\infty})+\|\curl u\|_{L^\infty} \\
&\le C\|F\|_{L^2}^{\frac{q-2}{2(q-1)}}\|\nabla F\|_{L^q}^{\frac{q}{2(q-1)}}+\|P-\bar{P}\|_{L^\infty}+C\|\curl u\|_{L^2}^{\frac{q-2}{2(q-1)}}\|\nabla \curl u\|_{L^q}^{\frac{q}{2(q-1)}}+C \\
&\le C+C\|\nabla F\|_{L^q}^{\frac{q}{2(q-1)}}+C\|\nabla\curl u\|_{L^q}^{\frac{q}{2(q-1)}} \\
&\le C+C\|\rho\dot{u}\|_{L^q}^{\frac{q}{2(q-1)}}.
\ea\ee

By Lemma \ref{le9}, it yields that
\be\la{cxb12}\ba
\|\na u\|_{L^\infty } &\le C\left(\|{\rm div}u\|_{L^\infty }+
\|\curl u\|_{L^\infty } \right)\ln(e+\|\na^2 u\|_{L^q }) +C\|\na
u\|_{L^2} +C \\
&\le C\left(1+\|\rho\dot{u}\|_{L^q}^{\frac{q}{2(q-1)}}\right)\ln(e+\|\rho\dot u\|_{L^q } +\|\na \rho\|_{L^q})+C \\
&\le C(1+\|\rho\dot{u}\|_{L^q})\ln(e+\|\na \rho\|_{L^q}) .
\ea\ee
Consequently, for any $p>2,$ \eqref{cxb9} becomes
\be\la{cxb13}\ba
\frac{d}{dt}(e+\|\nabla\rho\|_{L^p})&\leq C(1+\|\rho\dot{u}\|_{L^p})(e+\|\nabla\rho\|_{L^p})\ln(e+\|\nabla\rho\|_{L^p})+C\|\rho\dot{u}\|_{L^p}.
\ea\ee

We can rewrite \eqref{cxb13} as
\be\la{cxb14}\ba
&\frac{d}{dt}(\ln(e+\|\nabla\rho\|_{L^p}))\\
&\leq C(1+\|\rho\dot{u}\|_{L^p})\ln(e+\|\nabla\rho\|_{L^p})+C\|\rho\dot{u}\|_{L^p}\\
&\le C(1+\|\nabla\dot{u}\|_{L^2}+\|\nabla u\|_{L^2}^{2})\ln(e+\|\nabla\rho\|_{L^p})+C(\|\nabla\dot{u}\|_{L^2}+\|\nabla u\|_{L^2}^{2}),
\ea\ee
where in the last inequality, we have used \eqref{tb90}.

By Gronwall's inequality,  \eqref{cxb3} and \eqref{cxb8}, we have
\be\la{cxb15}\ba
\sup_{0\leq t\leq T}\|\nabla\rho\|_{L^{p}}\leq C ,
\ea\ee
for any $p\geq 2 .$

Furthermore, by \eqref{cxb8}, \eqref{tb90} and \eqref{cxb11}
\be\la{cxb16}\ba
\int_0^T\|\nabla u\|_{L^\infty}dt\leq C,\,\,\int_0^T\|\nabla^{2} u\|_{L^p}^{2}dt\leq C\,\, and\,\,\sup_{0\leq t\leq T}\|\nabla^{2}u\|_{L^{2}}\leq C.
\ea\ee

We complete the proof of Lemma \ref{xle1}.
\end{proof}

The following Lemmas \ref{xle2}--\ref{xle5} will deal with the
higher order estimates of a local classical solution to be a global
one. The proofs are similar to the ones in \cite{hx2,hlma}.  For completeness, we sketched them here .
\begin{lemma}\la{xle2}
 The following estimates hold:
\be\la{cxb17}\ba
\sup_{0\le t\le T}\|\rho^{\frac{1}{2}}u_t\|_{L^2}^2 + \ia\int|\nabla u_t|^2dxdt\le C,
\ea\ee
\be\la{cxb18}\ba
\sup_{0\le t\le T}(\|{\rho- \bar{\rho}}\|_{H^2} +
 \|{P- \bar{P}}\|_{H^2})\le C.
\ea\ee
\end{lemma}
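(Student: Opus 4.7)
The plan is to handle \eqref{cxb17} first (which is essentially a direct consequence of Lemma \ref{xle1}) and then prove \eqref{cxb18} by simultaneously controlling $\|\nabla^2\rho\|_{L^2}$ and $\|\nabla^2 P\|_{L^2}$ through a Gronwall argument in which the top-order term $\|\nabla^3 u\|_{L^2}$ is absorbed via the elliptic estimate \eqref{tb102}.

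For \eqref{cxb17}, I would use the identity $u_t=\dot u-u\cdot\nabla u$. The pointwise bound $\rho\le 2\hat\rho$ from \eqref{lv102} together with Lemma \ref{xle1} gives
\[
\|\rho^{1/2}u_t\|_{L^2}\le \|\rho^{1/2}\dot u\|_{L^2}+C\|u\|_{L^\infty}\|\nabla u\|_{L^2}\le C,
\]
since $\|u\|_{L^\infty}\le C\|u\|_{H^2}\le C$ by \eqref{cxb2}. For the dissipation term, I would decompose
\[
\nabla u_t=\nabla\dot u-\nabla u\cdot\nabla u-u\cdot\nabla^2 u,
\]
and use $\|\nabla u\|_{L^4}^2+\|u\|_{L^\infty}^2\|\nabla^2 u\|_{L^2}^2\le C(1+\|\nabla^2 u\|_{L^p}^2)$, together with $\int_0^T(\|\nabla\dot u\|_{L^2}^2+\|\nabla^2 u\|_{L^p}^2)dt\le C$ from Lemma \ref{xle1}, to conclude $\int_0^T\|\nabla u_t\|_{L^2}^2dt\le C$.

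For \eqref{cxb18}, I would differentiate \eqref{a1}$_1$ twice in $x$. Writing the resulting equation schematically as
\[
(\nabla^2\rho)_t+u\cdot\nabla(\nabla^2\rho)=-\rho\,\nabla^2\div u+R,
\]
where $R$ collects terms of the form $\nabla u\,\nabla^2\rho$, $\nabla^2 u\,\nabla\rho$, and $\nabla\rho\,\nabla^2 u$, I multiply by $\nabla^2\rho$ and integrate. After integration by parts on the convective term (killing a factor $\div u$), standard Hölder estimates combined with $\|\nabla\rho\|_{L^p}\le C$ from \eqref{cxb2} give
\[
\frac{d}{dt}\|\nabla^2\rho\|_{L^2}^2\le C(1+\|\nabla u\|_{L^\infty})\|\nabla^2\rho\|_{L^2}^2+C\|\nabla^2 u\|_{L^p}^2+C\|\nabla^3 u\|_{L^2}\|\nabla^2\rho\|_{L^2}.
\]
An analogous inequality holds for $\|\nabla^2 P\|_{L^2}^2$ by differentiating \eqref{Pu2}. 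The main obstacle is the last term: by \eqref{tb102} with $p=2$ one has
\[
\|\nabla^3 u\|_{L^2}\le C\bigl(\|\nabla(\rho\dot u)\|_{L^2}+\|\rho\dot u\|_{L^2}+\|P-\bar P\|_{W^{2,2}}+\|\nabla u\|_{L^2}\bigr),
\]
and $\|\nabla(\rho\dot u)\|_{L^2}\le C(\|\nabla\rho\|_{L^4}\|\dot u\|_{L^4}+\|\nabla\dot u\|_{L^2})\le C(1+\|\nabla\dot u\|_{L^2})$ by \eqref{tb90} and \eqref{cxb2}, while $\|P-\bar P\|_{W^{2,2}}\le C(1+\|\nabla^2 P\|_{L^2})$. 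Hence
\[
\|\nabla^3 u\|_{L^2}\le C(1+\|\nabla\dot u\|_{L^2}+\|\nabla^2 P\|_{L^2}).
\]

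Substituting into the coupled inequalities for $\|\nabla^2\rho\|_{L^2}^2+\|\nabla^2 P\|_{L^2}^2$ and absorbing the $\|\nabla^2 P\|_{L^2}$ on the right via Young's inequality yields
\[
\frac{d}{dt}\bigl(\|\nabla^2\rho\|_{L^2}^2+\|\nabla^2 P\|_{L^2}^2\bigr)\le C\bigl(1+\|\nabla u\|_{L^\infty}+\|\nabla\dot u\|_{L^2}^2\bigr)\bigl(1+\|\nabla^2\rho\|_{L^2}^2+\|\nabla^2 P\|_{L^2}^2\bigr).
\]
Since $\int_0^T(\|\nabla u\|_{L^\infty}+\|\nabla\dot u\|_{L^2}^2)dt\le C$ by Lemma \ref{xle1}, Gronwall's inequality delivers \eqref{cxb18}. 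The bound on $\|\rho-\bar\rho\|_{L^2}$ and $\|P-\bar P\|_{L^2}$ follows from Poincaré's inequality together with $\overline{\rho-\bar\rho}=0$ and $\|\nabla\rho\|_{L^2}\le C$, and the $\|\nabla\rho\|_{L^2}$, $\|\nabla P\|_{L^2}$ pieces are already in \eqref{cxb2}.
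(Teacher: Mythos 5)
Your proposal is correct and follows essentially the same route as the paper: for \eqref{cxb17} you split $u_t=\dot u-u\cdot\nabla u$ and invoke Lemma \ref{xle1}, and for \eqref{cxb18} you differentiate the transport equations for $\rho$ and $P$, invoke the elliptic bound \eqref{tb102} to control $\|\nabla^3 u\|_{L^2}$ by $1+\|\nabla\dot u\|_{L^2}+\|\nabla^2 P\|_{L^2}$, and close with Gronwall. The only (immaterial) discrepancy is that your final displayed differential inequality drops the additive, time-integrable term $C\|\nabla^2 u\|_{L^p}^2$ that the paper carries as $\|\nabla^2 u\|_{L^4}^2$ in its Gronwall coefficient, and you place $\|\nabla\dot u\|_{L^2}^2$ multiplicatively where the paper keeps it additive; both versions close since every such factor is integrable over $[0,T]$ by Lemma \ref{xle1}.
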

\begin{proof} By Lemma \ref{xle1}, a simple computation yields that
\be\la{cxb19}\ba
\|\rho^{\frac{1}{2}}u_t\|_{L^2}^2&\le \|\rho^{\frac{1}{2}}\dot u \|_{L^2}^2+\|\n^{\frac{1}{2}}u\cdot\na u\|_{L^2}^2\\
&\le C+C\|u\|_{L^4}^2\|\nabla u\|_{L^4}^2 \\
&\le C+\|\nabla u\|_{L^2}^2\|u\|_{H^2}^2 \\
&\le C,
\ea\ee
and
\be \la{cxb20}\ba
 \int_0^T\|\nabla u_t\|_{L^2}^2dt &\le\int_0^T\|\nabla \dot
u\|_{L^2}^2dt + \int_0^T\|\nabla(u\cdot\nabla u)\|_{L^2}^2dt \\
&\le C+\int_0^T(\|\nabla u\|_{L^4}^4+\|u\|_{L^\infty}^2\|\nabla^{2}u\|_{L^2}^2)dt  \\
&\le C+C\int_0^T(\|\nabla^{2}u\|_{L^2}^4+\|\nabla u\|_{H^1}^{2}\|\nabla^{2}u\|_{L^2}^2)dt \\
&\le C ,
\ea\ee
 so we get \eqref{cxb17} .

 In the following, \eqref{cxb18} will be proved.

By \eqref{Pu2}, a simple computation shows that
\be \la{cxb22}\ba
&\frac{d}{dt}\left(\|\nabla^2P\|_{L^2}^2 +\|\nabla^2\rho\|_{L^2}^2\right)\\
&\le C(1+\|\nabla
u\|_{L^\infty}+\|\nabla^2u\|_{L^4}^2)(\|\nabla^2P\|_{L^2}^2 +\|\nabla^2\rho
\|_{L^2}^2) + C\|\nabla\dot{u}\|_{L^2}^2 + C ,
\ea\ee
 where we have used $(\ref{a1})_1$, Lemma \ref{xle1} and \eqref{tb102}.
Consequently, by Gronwall's inequality, \eqref{cxb22} and Lemma \ref{xle1} we obtain
  \bnn \sup_{0\le t\le T} {\left(\|\nabla^2P\|_{L^2}^2
+\norm[L^2]{\nabla^2 \rho } \right)}\le C. \enn Thus the proof of
Lemma \ref{xle2} is completed.
\end{proof}
\begin{lemma}\la{xle3}
It holds that: \be\la{cxb24}
   \sup\limits_{0\le t\le T}\left(
   \|\n_t\|_{H^1}+\|P_t\|_{H^1}\right)
    + \int_0^T\left(\|\n_{tt}\|_{L^2}^2+\|P_{tt}\|_{L^2}^2\right)dt
\le C,
  \ee
\be\la{cxb25}
   \sup\limits_{0\le t\le T}\si \|\nabla u_t\|_{L^2}^2
    + \int_0^T\si\|\rho^{\frac{1}{2}}u_{tt}\|_{L^2}^2dt
\le C.
  \ee
\end{lemma}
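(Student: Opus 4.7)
\textbf{Proof plan for Lemma \ref{xle3}.}

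For the estimate \eqref{cxb24} on $(\rho_t,P_t)$, the plan is to use the transport structure of the continuity and pressure equations directly. From $(\ref{a1})_1$ one has $\rho_t=-u\cdot\nabla\rho-\rho\,\div u$, so
\[
\|\rho_t\|_{L^2}\le\|u\|_{L^\infty}\|\nabla\rho\|_{L^2}+\|\rho\|_{L^\infty}\|\nabla u\|_{L^2},
\]
and upon taking one more spatial derivative each of the four terms of $\nabla\rho_t=-\nabla u\cdot\nabla\rho-u\cdot\nabla^2\rho-\nabla\rho\,\div u-\rho\,\nabla\div u$ is bounded in $L^2$ by using $\|u\|_{L^\infty}\le C\|u\|_{H^2}\le C$, $\|\nabla u\|_{L^4}+\|\nabla^2 u\|_{L^2}\le C$ from Lemma \ref{xle1}, and $\|\nabla\rho\|_{L^4}+\|\nabla^2\rho\|_{L^2}\le C$ from Lemmas \ref{xle1}--\ref{xle2}. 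The same argument with \eqref{Pu2} in place of $(\ref{a1})_1$ handles $P_t$. For the $L^2(0,T;L^2)$ control of the second time derivatives, differentiate once more to get
\[
\rho_{tt}=-u_t\cdot\nabla\rho-u\cdot\nabla\rho_t-\rho_t\,\div u-\rho\,\div u_t,
\]
and bound $\int_0^T\|u_t\cdot\nabla\rho\|_{L^2}^2dt\le C\int_0^T\|u_t\|_{L^4}^2\|\nabla\rho\|_{L^4}^2 dt$ using the Poincaré-type inequality for $u_t$ (since $u_t\cdot n=0$), together with the bound $\|u_t\|_{L^2}\le C\|\sqrt\rho u_t\|_{L^2}\le C$ from Lemma \ref{le7} and \eqref{cxb17}, and the fact that $\int_0^T\|\nabla u_t\|_{L^2}^2dt\le C$ from \eqref{cxb17}. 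The other three terms are handled analogously, and then $P_{tt}$ is treated using \eqref{Pu2}.

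For the estimate \eqref{cxb25}, the plan is the standard differentiation-in-time argument. Writing $(\ref{a1})_2$ as $\rho u_t+\rho u\cdot\nabla u-\mu\Delta u-(\lambda+\mu)\nabla\div u+\nabla P=0$ and differentiating in $t$ gives
\[
\rho u_{tt}-\mu\Delta u_t-(\lambda+\mu)\nabla\div u_t=-\rho_t(u_t+u\cdot\nabla u)-\rho(u_t\cdot\nabla u+u\cdot\nabla u_t)-\nabla P_t.
\]
I would multiply this equation by $\sigma u_{tt}$ and integrate over $\Omega$. Using $-\Delta u_t=-\nabla\div u_t+\nabla^\perp\curl u_t$ and integrating by parts, the viscous terms produce
\[
\tfrac{1}{2}\tfrac{d}{dt}\bigl[\sigma(\lambda+2\mu)\|\div u_t\|_{L^2}^2+\sigma\mu\|\curl u_t\|_{L^2}^2\bigr]
\]
together with boundary and lower-order contributions. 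On the right-hand side, each term is estimated using Hölder, Sobolev embedding, and the previously established bounds: $\|\sqrt\rho u_t\|_{L^2}$ in $L^\infty$ from \eqref{cxb17}, $\nabla P_t\in L^\infty(0,T;L^2)$ from \eqref{cxb24}, $\|\nabla u\|_{L^4}$ in $L^\infty$ from Lemma \ref{xle1}, and $\|\rho_t\|_{L^3}$ controlled by \eqref{cxb24}. The $\sigma'$ contribution is handled by \eqref{cxb17}, so one can absorb $\delta\int\sigma\rho|u_{tt}|^2dx$ terms and apply Grönwall.

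The main obstacle, just as in the proof of Lemma \ref{xcrle1}, is the boundary terms generated by integration by parts of $-\mu\Delta u_t\cdot u_{tt}$ and $(\lambda+\mu)\nabla\div u_t\cdot u_{tt}$. Since $u_t\cdot n=0$ on $\partial\Omega$ (because $n$ is time-independent) and $\curl u_t=2(\kappa-\vartheta)u_t\cdot\omega$ on $\partial\Omega$, one gets $u_{tt}\cdot n=0$ and a boundary contribution $2\mu\int_{\partial\Omega}(\kappa-\vartheta)u_t\cdot u_{tt}\,ds$; fortunately this is a perfect time derivative, equal to $\mu\frac{d}{dt}\int_{\partial\Omega}(\kappa-\vartheta)|u_t|^2 ds$, and can be folded into the left-hand side, leaving a boundary quantity bounded via trace/Sobolev by $\sigma\|\nabla u_t\|_{L^2}^2$ at small coefficient. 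Finally, integrating in $t$ and noting $\sigma(0)=0$ (so no contribution from the compatibility condition is needed at $t=0$ for this estimate, in contrast to \eqref{cxb17}) yields \eqref{cxb25}.
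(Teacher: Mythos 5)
Your plan for \eqref{cxb24} is essentially the paper's argument (the paper does $P$ first via \eqref{Pu2}, then says $\rho$ is analogous; you run them in the opposite order, which is the same). However, the intermediate claim $\|u_t\|_{L^2}\le C\|\sqrt\rho\,u_t\|_{L^2}$ is false here: vacuum is allowed, so $\rho$ has no positive lower bound, and Lemma \ref{le7} does not give one. Fortunately this step is not needed — since $u_t\cdot n=0$ on $\partial\Omega$, the Poincar\'e inequality yields $\|u_t\|_{L^4}\le C\|\nabla u_t\|_{L^2}$, which is all you require, and you already invoke it; you should simply drop the spurious inequality.

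For \eqref{cxb25} your overall route — differentiate $(\ref{a1})_2$ in time, test against $u_{tt}$, integrate by parts, weight by $\sigma$, Gr\"onwall — matches the paper, and your treatment of the boundary term (recognizing $2\mu\int_{\partial\Omega}(\kappa-\vartheta)u_t\cdot u_{tt}\,ds = \mu\frac{d}{dt}\int_{\partial\Omega}(\kappa-\vartheta)|u_t|^2\,ds$ and absorbing it) is exactly what the paper does. But there is a genuine gap in the remark that "each term is estimated using H\"older, Sobolev embedding, and the previously established bounds." The terms $-\int\rho_t\,u_t\cdot u_{tt}\,dx$, $-\int\rho_t(u\cdot\nabla u)\cdot u_{tt}\,dx$ and $-\int\nabla P_t\cdot u_{tt}\,dx$ \emph{cannot} be handled by direct H\"older estimates: $u_{tt}$ appears without the $\rho^{1/2}$ weight (or, after integrating by parts $\nabla P_t$, with a $\nabla u_{tt}$ that you do not control), and $\|u_{tt}\|_{L^2}$ itself is not an a priori controlled quantity. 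The necessary step — which the paper carries out explicitly by extracting the collection $I_0=-\int\rho_t|u_t|^2\,dx-2\int\rho_t u\cdot\nabla u\cdot u_t\,dx+2\int P_t\div u_t\,dx+2\mu\int_{\partial\Omega}(\kappa-\vartheta)|u_t|^2\,ds$ under a time derivative — is to rewrite each such pairing as $\frac{d}{dt}(\cdot)$ minus a term that transfers the extra $\partial_t$ off $u_{tt}$ and onto $\rho_t$ or $P_t$ (producing $\rho_{tt}$, $P_{tt}$, which are precisely why you needed \eqref{cxb24} first). Without this algebraic rearrangement the energy estimate does not close, so your proposal should make it explicit.
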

\begin{proof} (\ref{Pu2}) and Lemma \ref{xle1} show that
\be \la{cxb26}
\|P_t\|_{L^2}\le
C\|u\|_{L^\infty}\|\nabla P\|_{L^2}+C\|\nabla u\|_{L^2}\le C.
\ee
Differentiating (\ref{Pu2}) with respect to $x$ yields
\bnn
\nabla P_t+u\cdot\nabla\nabla P+\nabla u\cdot\nabla P+\ga \nabla P {\rm div}u+\ga P  \nabla{\rm div}u=0.
\enn
Consequently, by Lemmas \ref{xle1} and \ref{xle2}, we get
\bn\la{cxb27} \|\nabla P_t\|_{L^2}\le C\|u\|_{L^\infty}\|\nabla^2
P\|_{L^2}+C\|\nabla^{2} u\|_{L^2}\|\nabla P\|_{L^6}+C\|\nabla^2
u\|_{L^2}\le C.\en
Combining (\ref{cxb26}) with (\ref{cxb27}) implies
\bn \la{cxb28}\sup_{0\le t\le T}\|P_t\|_{H^1}\le C.
\en
Differentiating (\ref{Pu2}) with respect to $t$ yields
\be\la{cxb29} P_{tt} + \gamma P_t{\rm div}u +
\gamma P{\rm div}u_t + u_t\cdot\nabla P + u\cdot\nabla P_t = 0.
\ee
Multiplying \eqref{cxb29} by $P_{tt}$ and integrating the resulting equality over $\Omega\times[0,T],$ we deduce from \eqref{cxb28}, Lemmas \ref{xle1} and \ref{xle2} that
\be \la{cxb30}\ba
&\int_0^T\|P_{tt}\|_{L^2}^2dt \\
& = -\int_0^T\int\gamma P_{tt}P_t\div udxdt - \int_0^T\int\gamma P_{tt}P\div u_tdxdt  \\
& \quad - \int_0^T\int P_{tt}u_t\cdot\nabla Pdxdt - \int_0^T\int P_{tt}u\cdot\nabla P_tdxdt \\
& \le \frac{1}{2}\int_0^T\|P_{tt}\|_{L^2}^2dt + C\int_0^T(\|P_t\|_{L^4}^{4}+\|\nabla u\|_{L^4}^{4})dt+\int_0^T\|\nabla u_t\|_{L^2}^{2}dt \\
& \quad +\int_0^T\| P_{tt}\|_{L^2}\|u_t\|_{L^6}\|\nabla P\|_{L^3}dt+\int_0^T\| u\|_{L^\infty}\|\nabla P_t\|_{L^2}^{2}dt \\
& \le \frac{1}{2}\int_0^T\|P_{tt}\|_{L^2}^2dt + C\int_0^T(\|P_t\|_{H^1}^{4}+\|\nabla^{2} u\|_{L^2}^{4})dt+\int_0^T\|\nabla u_t\|_{L^2}^{2}dt \\
& \quad +\int_0^T\|\nabla u_t\|_{L^2}^{2}dt+\int_0^T\|\nabla u\|_{H^1}\|\nabla P_t\|_{L^2}^{2}dt \\
& \le \frac{1}{2}\int_0^T\|P_{tt}\|_{L^2}^2dt + C,
\ea\ee
where we have make use of Sobolev's inequality .

Therefore, it holds
$$\int_0^T\|P_{tt}\|_{L^2}^2dt \le C .$$
One can handle with $\n_t$ and
$\n_{tt}$ in a similar way. So we get (\ref{cxb24}).

Next, (\ref{cxb25}) will be proved. Differentiating  $(\ref{a1})_2$  with
respect to $t$ , then multiplying the resulting equation by
$u_{tt},$  we obtain
\be\la{cxb34} \ba
&\frac{d}{dt}\left((\lambda+2\mu)\int(\div u_t)^{2}dx+\mu\int(\curl u_t)^2dx\right)+2\int\rho|u_{tt}|^2dx  \\
&=\frac{d}{dt}\left(-\int\rho_t|u_t|^{2}dx-2\int\rho_tu\cdot\nabla u\cdot u_tdx+2\int P_t\div u_tdx+2\mu\int_{\partial\Omega}(\kappa-\vartheta) |u_t|^2ds\color{black}\right) \\
&\quad +\int\rho_{tt}|u_t|^{2}dx + 2\int(\rho_tu\cdot\nabla u)_t\cdot u_tdx-2\int\rho u_t\cdot\nabla u\cdot u_{tt}dx \\
&\quad -2\int\rho u\cdot\nabla u_t\cdot u_{tt}dx - 2\int P_{tt}\div u_tdx \\
&\triangleq\frac{d}{dt}I_0 + \sum\limits_{i=1}^5I_i .
\ea \ee
It follows from $(\ref{a1})_1,$  (\ref{cxb2}), (\ref{cxb17}), and
(\ref{cxb24}) that
\be \ba \la{cxb35}
I_0& =-\int_{
}\rho_t |u_t|^2 dx- 2\int_{ }\rho_t u\cdot\nabla u\cdot u_tdx+
2\int_{ }P_t {\rm div}u_tdx+2\mu\int_{\partial\Omega}(\kappa-\vartheta) |u_t|^2ds\\
&\le \left|\int_{ } {\rm
div}(\rho u)|u_t|^2dx\right|+C\norm[L^3]{\rho_t}\|\nabla u\|_{L^4}^2
\norm[L^6]{u_t}+C\|\nabla u_t\|_{L^2}\\
&\le C \int_{} \n |u||u_t||\nabla u_t| dx +C\|\nabla u_t\|_{L^2} \\
&\le C\|u\|_{L^6}\|\n^{1/2} u_t\|_{L^2}^{1/2}\|u_t\|_{L^6}^{1/2}\|\nabla
u_t\|_{L^2} +C\|\nabla u_t\|_{L^2}\\
&\le C\|\nabla u_t\|_{L^2}^{3/2}+C\|\nabla u_t\|_{L^2}\\
&\le \de\|\nabla u_t\|_{L^2}^2+C(\de),\ea\ee
Define
$$H(t)\triangleq(\lambda+2\mu)\int(\div u_t)^{2}dx+\mu\int(\curl u_t)^{2}dx ,$$
which satisfies
\be\ba\la{cxb40}
C^{-1}\|\nabla u_t\|_{L^2}\leq H(t)\leq C\|\nabla u_t\|_{L^2},
\ea\ee
since $u_t\cdot n = 0$ on $\partial\Omega.$
 \be \la{cxb36}\ba
|I_1|&=\left|\int_{ }\rho_{tt} |u_t|^2 dx\right|\\
& = \left|\int_{ }\div(\rho u)_t|u_t|^2 dx\right|\\
& = 2\left|\int_{ }(\rho_tu + \rho u_t)\cdot\nabla u_t\cdot u_tdx\right|\\
& \le  C\left(\norm[H^1]{\rho_t}\norm[H^2]{u}
  +\|\rho^{{1/2}}u_t\|_{L^2}^{1/2}\|\nabla u_t\|_{L^2}^{1/2}\right)\|\nabla u_t\|_{L^2}^2 \\
& \le C\|\nabla u_t\|_{L^2}^4 + C\\
& \le C\|\nabla u_t\|_{L^2}^2H(t) + C,
\ea \ee
   and
\be \la{cxb37}\ba
|I_2|&=2\left|\int_{ }\left(\rho_t u\cdot\nabla u \right)_t\cdot u_{t}dx\right|\\
& = 2\left|  \int_{ }\left(\rho_{tt} u\cdot\nabla u\cdot u_t +\rho_t
u_t\cdot\nabla u\cdot u_t+\rho_t u\cdot\nabla u_t\cdot
u_t\right)dx\right|\\
&\le2\norm[L^2]{\rho_{tt}}\norm[L^3]{u\cdot\nabla u}\norm[L^6]{u_t}+2\norm[L^2]{\rho_t}\|u_t\|_{L^6}^2\norm[L^6]{\nabla u} \\
&\quad+2\norm[L^3]{\rho_t}\norm[L^{\infty}]{u}\norm[L^2]{\nabla u_t}\norm[L^6]{u_t}\\
& \le C\norm[L^2]{\rho_{tt}}^2 + C\norm[L^2]{\nabla u_t}^2. \ea \ee
\be\ba\la{cxb38}
|I_3|+|I_4|&= 2\left| \int_{ }\rho u_t\cdot\nabla
u\cdot u_{tt} dx\right| +2\left| \int_{ }\rho u\cdot\nabla u_t\cdot
u_{tt} dx\right|\\& \le   C\|\n^{1/2}u_{tt}\|_{L^2}\left(
\|u_t\|_{L^6}\|\na u\|_{L^3}+\|u\|_{L^\infty}\|\na
u_t\|_{L^2}\right) \\& \le\delta\norm[L^2]{\rho^{{1/2}}u_{tt}}^2 +
C(\de)\norm[L^2]{\nabla u_t}^2 , \ea\ee
and
\be\ba\la{cxb39}
|I_5|&=2\left|\int_{ }P_{tt}{\rm div}u_tdx\right|\\&\le
2\norm[L^2]{P_{tt}}\norm[L^2]{{\rm div}u_t}\\& \le
C\norm[L^2]{P_{tt}}^2 + C\norm[L^2]{\nabla u_t}^2.
\ea\ee
Consequently,
\bnn\la{cxb41} \ba
&\frac{d}{dt}(\sigma H(t)-\sigma I_0)+\sigma\int\rho|u_{tt}|^{2}dx \\
&\le C(1+\|\nabla u_t\|_{L^2}^2)\sigma H(t)+C(1+\|\rho_{tt}\|_{L^2}^2+\|P_{tt}\|_{L^2}^2+\|\nabla u_t\|_{L^2}^2),
\ea \enn
Choosing $\delta>0$ suitably small, by Gronwall's inequality, \eqref{cxb17} and \eqref{cxb24}, we derive that
\bnn\la{cxb42} \ba
&\sup_{0\le t\le T}\sigma H(t)+\int_0^T\sigma\|\rho^{\frac{1}{2}}u_{tt}\|_{L^2}^2dt\le C .
\ea \enn
As a result, by \eqref{cxb40},
\bnn\la{cxb43} \ba
&\sup_{0\le t\le T}\sigma \|\nabla u_t\|_{L^2}^2+\int_0^T\sigma\|\rho^{\frac{1}{2}}u_{tt}\|_{L^2}^2dt\le C .
\ea \enn
This finishes the proof .
\end{proof}
\begin{lemma}\la{xle4}
For any $q\geq2,$ the following estimates hold:
\be\la{cxb44}\ba \sup_{t\in[0,T]}\left(\|\rho- \bar{\rho}\|_{W^{2,q}} +\|P-\bar{P}\|_{W^{2,q}}\right)\le C,\ea \ee
\be\la{cxb45}\ba\sup_{t\in[0,T]} \si \|\nabla u\|_{H^2}^2  +\ia \left(\|\nabla  u\|_{H^2}^2+\|\na^2
u\|^{p_0}_{W^{1,q}}+\si\|\na u_t\|_{H^1}^2\right)dt\le C,\ea \ee
where $p_0=(1,\frac{q}{q-1})$
\end{lemma}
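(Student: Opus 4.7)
The strategy is to close a bootstrap linking three ingredients: transport estimates for $\n$ and $P$, $L^q$-elliptic theory for the Lam\'e system \eqref{lames} satisfied by $u$, and the analogous system satisfied by $u_t$. All bounds from Lemmas \ref{xle1}--\ref{xle3} serve as inputs. The first step is to upgrade $\|\n-\ol\n\|_{H^2}+\|P-\ol P\|_{H^2}$ (already available from \eqref{cxb18}) to $W^{2,q}$. Differentiating $\eqref{a1}_1$ and \eqref{Pu2} twice in space, multiplying by $|\na^2\n|^{q-2}\na^2\n$ and $|\na^2P|^{q-2}\na^2P$, and integrating by parts, while using the $W^{1,p}$-bounds on $\n$ and the $L^\infty$-bound on $u$ from Lemma \ref{xle1}, leads to a differential inequality of the form
\begin{equation*}
\frac{d}{dt}\bigl(\|\na^2\n\|_{L^q}+\|\na^2P\|_{L^q}\bigr)\le C(1+\|\na u\|_{L^\infty})\bigl(\|\na^2\n\|_{L^q}+\|\na^2P\|_{L^q}\bigr)+C\|\na^3 u\|_{L^q}.
\end{equation*}
The estimate \eqref{tb102} converts $\|\na^3u\|_{L^q}$ into $C(\|\n\dot u\|_{W^{1,q}}+\|P-\ol P\|_{W^{2,q}}+\|\na u\|_{L^2})$; the $\|P-\ol P\|_{W^{2,q}}$ piece is absorbed by Gr\"onwall using \eqref{cxb2}, and \eqref{cxb44} then reduces to showing that $\|\n\dot u\|_{W^{1,q}}$ is integrable in time.

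The time-integrability of $\|\n\dot u\|_{W^{1,q}}$ is the heart of the proof. I split $\na(\n\dot u)=\dot u\otimes\na\n+\n\na\dot u$; the first term is bounded in $L^2_t$ by H\"older, Sobolev and Lemma \ref{uup1} combined with the uniform bound $\|\na\n\|_{L^{2q}}\le C$ from Lemma \ref{xle1} and the $L^2_t$-control of $\|\na\dot u\|_{L^2}$ in \eqref{cxb2}. For the second, Gagliardo-Nirenberg gives
\begin{equation*}
\|\na\dot u\|_{L^q}\le C\|\na\dot u\|_{L^2}^{2/q}\|\na^2\dot u\|_{L^2}^{(q-2)/q}+C\|\na\dot u\|_{L^2},
\end{equation*}
and the needed control on $\|\na^2\dot u\|_{L^2}$ comes from viewing the materially differentiated momentum equation as a Lam\'e system for $\dot u$ forced by $u_{tt}$-terms, whose solution is bounded via \eqref{cxb25} only with a $\si$-weight. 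H\"older interpolation of this $\si$-weighted bound against $\|\na\dot u\|_{L^2}\in L^2_t$ places $\|\na\dot u\|_{L^q}\in L^{p_0}_t$ for any $p_0<q/(q-1)$, which is both what is required to close the preceding step and what produces the middle term of \eqref{cxb45}.

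For the remaining parts of \eqref{cxb45}, I apply Lemma \ref{zhle} to \eqref{lames} to obtain $\si\|\na u\|_{H^2}^2\le C\si(\|\n\dot u\|_{H^1}^2+\|P-\ol P\|_{H^2}^2+\|\na u\|_{L^2}^2)$, whose right-hand side is uniformly bounded via the previous steps, \eqref{cxb18} and \eqref{cxb25}. For the $\si$-weighted $H^1$-bound on $u_t$, differentiate $\eqref{a1}_2$ in $t$ and view the result as a Lam\'e system for $u_t$ with forcing $-\rho u_{tt}-(\n_t u+\n u_t)\cdot\na u-\n u\cdot\na u_t-\na P_t$ and boundary data $u_t\cdot n=0$, $\curl u_t=2(\kappa-\vartheta)u_t\cdot\omega$ (the latter follows by differentiating \eqref{ch1} in $t$, since $\kappa$ and $\vartheta$ are $t$-independent). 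Lemma \ref{zhle} then yields $\|u_t\|_{H^2}\le C(\|\n^{1/2}u_{tt}\|_{L^2}+\|\na u_t\|_{L^2}+\text{lower order terms})$, and multiplying by $\si$ and integrating closes $\int_0^T\si\|\na u_t\|_{H^1}^2dt\le C$ using \eqref{cxb24}, \eqref{cxb25} and the $L^2_t$-bound on $\si^{1/2}\|\n^{1/2}u_{tt}\|_{L^2}$ from \eqref{cxb25}.

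\emph{Main obstacle.} The threshold $p_0<q/(q-1)$ is sharp and reflects the absence of any uniform-up-to-$t=0$ bound on $\na^2\dot u$ in $L^2$; only the $\si$-weighted bound from \eqref{cxb25} is at one's disposal. Choosing the Gagliardo-Nirenberg interpolation exponent so that H\"older pairing with the resulting $\si^{-\alpha}$ singularity near $t=0$ stays integrable is the delicate balancing act on which the entire chain of inequalities in the bootstrap hinges. The compatibility condition \eqref{dt3} is what enables all the $\si$-weighted bounds to kick off cleanly via $\sqrt{\n}\dot u(0)=\sqrt{\n_0}g\in L^2$.
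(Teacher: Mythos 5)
Your overall plan tracks the paper's closely: bound $\|\rho-\bar\rho\|_{W^{2,q}}+\|P-\bar P\|_{W^{2,q}}$ via the transport equations and a Gr\"onwall argument, with the key input being time-integrability of $\|\nabla(\rho\dot u)\|_{L^q}^{p_0}$; obtain $\sigma\|\nabla u\|_{H^2}^2$ from the Lam\'e estimate applied to \eqref{lames}; and get $\int_0^T\sigma\|\nabla u_t\|_{H^1}^2\,dt\le C$ from the Lam\'e estimate applied to the $t$-differentiated system for $u_t$, which does inherit the slip conditions because $n$, $\omega$, $\kappa$, $\vartheta$ are $t$-independent. The analysis of the sharp threshold $p_0<q/(q-1)$ is also right.

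The genuine gap is in the central step where you obtain $\|\nabla^2\dot u\|_{L^2}$ by ``viewing the materially differentiated momentum equation as a Lam\'e system for $\dot u$.'' Lemma \ref{zhle} requires the solution to satisfy the slip boundary conditions $v\cdot n=0$ and $\curl v=2(\kappa-\vartheta)v\cdot\omega$ on $\partial\Omega$, and $\dot u$ satisfies \emph{neither}: the paper itself records (via \eqref{bdd2}) that $\dot u\cdot n=-\kappa|u|^2$ on $\partial\Omega$, which is not zero in general, and $\curl\dot u$ on the boundary picks up terms from $\curl(u\cdot\nabla u)$ that are not of the required form. So the elliptic estimate you invoke does not apply to $\dot u$. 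The paper sidesteps this by never forming $\|\nabla^2\dot u\|_{L^2}$: instead it writes $\nabla\dot u=\nabla u_t+\nabla(u\cdot\nabla u)$, applies Gagliardo--Nirenberg to $\|\nabla u_t\|_{L^q}$ in terms of $\|\nabla u_t\|_{L^2}$ and $\|\nabla u_t\|_{H^1}$ (the latter controlled through the Lam\'e system for $u_t$, which \emph{is} admissible), and handles $\nabla(u\cdot\nabla u)$ by the already-available bounds on $\|u\|_{L^\infty}$, $\|\nabla^2 u\|_{L^q}$, $\|\nabla u\|_{L^{2q}}$. Replacing your Lam\'e-for-$\dot u$ step with this decomposition makes the argument sound; as written, the step fails.
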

 \begin{proof}
The standard $H^1$-estimate for elliptic system yields
\be\la{cxb46} \ba\|\nabla^2 u\|_{H^1} &\le C (\|\n \dot u\|_{H^1}+ \|P-\bar{P}\|_{H^2})+\|\nabla u\|_{L^2}\\
 &\le C+C \|\na  u_t\|_{L^2},
 \ea\ee
 where in the last inequality we have utilized \eqref{cxb2}, (\ref{cxb18}) as well as the following simple fact that
 \be \label{cxb47} \ba
  \|\nabla (\n \dot u) \|_{L^2}&\le
 \||\nabla \n | |  u_t|  \|_{L^2}+ \|\n \nabla   u_t  \|_{L^2}
 + \||\nabla \n|| u||\nabla u| \|_{L^2}\\ &\quad
 + \|\n|\nabla  u|^2\|_{L^2}
 + \|  \n |u || \nabla^2 u| \|_{L^2}\\&\le
 \|\nabla \n \|_{L^3} \|  u_t  \|_{L^6}+ C\| \nabla   u_t  \|_{L^2}
 + C\| \nabla \n\|_{L^3}\| u\|_{L^\infty}\|\nabla u \|_{L^6}\\
 &\quad + C\| \nabla  u\|_{L^3}\| \nabla  u\|_{L^6}
 + C\|     u \|_{L^\infty}\| \nabla^2 u  \|_{L^2}\\ &\le C+C\| \nabla   u_t  \|_{L^2}.
 \ea\ee
Then by (\ref{cxb46}),
(\ref{cxb2}), (\ref{cxb17}), and (\ref{cxb25}), we obtain
\be\la{cxb48}
\sup\limits_{0\le
t\le T}\si\|\nabla  u\|_{H^2}^2+\ia \|\nabla  u\|_{H^2}^2dt \le
 C.\ee
\eqref{cxb2} and \eqref{cxb24} show that
\be\la{cxb480}\ba
\|\na^2u_t\|_{L^2}
&\le C(\|(\rho\dot{u})_t\|_{L^2}+\|P_t\|_{H^1}+\|\nabla u_t\|_{L^2}) \\
&=C(\|\n  u_{tt}+\n_t u_t+\n_t u\cdot\nabla u + \n u_t\cdot\nabla u+\n u\cdot\nabla u_t\|_{L^2}+\|\nabla u_t\|_{L^2}+1)\\
&\le C\left(\|\n  u_{tt}\|_{L^2}+ \|\n_t\|_{L^3}
\|u_t\|_{L^6}+\|\n_t\|_{L^3}\| u\|_{L^\infty}\|\nabla
u\|_{L^6}\right)\\&\quad
  +C(\| u_t\|_{L^6}\|\nabla u\|_{L^3}+ \| u\|_{L^\infty}
  \|\nabla u_t\|_{L^2}+\|\nabla u_t\|_{L^2}+1)\\
&\le C\|\n  u_{tt}\|_{L^2} +C\|\nabla  u_t\|_{L^2}+C,
\ea \ee
where in the first inequality, we have used Lemma \ref{zhle} for the following elliptic system
\be\la{cxb49}\begin{cases}
  \mu\Delta u_t+(\lambda+\mu)\nabla\div u_t=(\rho\dot{u})_t+\nabla P_t , &x\in\Omega\\ u_t\cdot n=0,\,\,\curl u_t=(\kappa-\vartheta) u_t\cdot\omega, &x\in\partial\Omega .
\end{cases}\ee
Combining \eqref{cxb25}, it indicates
\be\la{cxb50}\ba
\int_0^T\sigma\|\nabla u_t\|_{H^1}^2dt\leq C .
\ea \ee
It follows from \eqref{cxb18} and \eqref{Pu2} that
\be\la{cxb51}\ba
(\|\na^2 P\|_{L^q})_t\le& C \|\na u\|_{L^\infty} \|\na^2 P\|_{L^q}   +C  \|\na^2 u\|_{W^{1,q}}   \\\le& C (1+\|\na u\|_{L^\infty} )\|\na^2 P\|_{L^q}    +C(1+ \|\na  u_t\|_{L^2}+C \| \na(\n
\dot u )\|_{L^{q}} ),
\ea\ee
 where in the second inequality we utilized the following simple fact that
\be \la{cxb52}\ba \|\na^2 u\|_{W^{1,q}}&\le C(\| \n\dot u \|_{W^{1,q}}+C\|P-\bar{P}\|_{W^{2,q}}+\|\nabla u\|_{L^2})\\
 &\le C (1+\| \n\dot u \|_{L^{q}}+\|\na  P\|_{L^{q}} + \| \na(\n\dot u )\|_{L^{q}}+\|\na^2  P\|_{L^{q}})\\
 &\le C(1 + \|\na  u_t\|_{L^2}+C \| \na(\n\dot u )\|_{L^{q}}+C\|\na^2  P\|_{L^{q}}),
 \ea\ee
 due to (\ref{cxb18}), (\ref{cxb2}), and (\ref{cxb47}).

On the other hand, observe that
\be\la{cxb53}\ba     \|\na(\n\dot u)\|_{L^q}
&\le C(\|\na \n\|_{L^q}\|\dot{u}\|_{H^1}+\|\na\dot u \|_{L^q})\\
&\le C(\|\dot{u}\|_{H^1}+\|\na u_t \|_{L^q}+\|\na(u\cdot \na u ) \|_{L^q})\\
&\le C(\|\dot{u}\|_{H^1}+\|\na u_t \|_{L^2}^{2/q}\|\na u_t \|_{H^1}^{1-2/q}+\|u \|_{L^\infty}\|\nabla^{2}u\|_{L^q}+\|\nabla u\|_{L^{2q}}^{2})\\
&\le C(\sigma^{-1/2}+\sigma^{-1/2}(\sigma\|\na u_t \|_{H^1}^2)^{(q-2)/2q}+\|\nabla^{2}u\|_{L^q}+1) ,
\ea \ee
due to \eqref{cxb2}.

By \eqref{cxb2} and \eqref{cxb50}, we get
\be\ba\la{cxb55}
\int_0^T\|\nabla(\rho\dot{u})\|_{L^q}^{p_0}dt\leq C .
\ea\ee
Hence, by Gronwall's inequality, we deduce from \eqref{cxb51}, \eqref{cxb55}, \eqref{cxb2} and \eqref{cxb17} that
\be\ba\la{cxb56}
\sup_{t\in[0,T]}\|\nabla^{2}P\|_{L^q}\leq C ,
\ea\ee
which along with \eqref{cxb17}, \eqref{cxb18}, \eqref{cxb52} and \eqref{cxb55} gives
\be\ba\la{cxb57}
\sup_{t\in[0,T]}\|P-\bar{P}\|_{W^{2,q}}+\int_0^T\|\nabla^{2}u\|_{W^{1,q}}^2dt\leq C .
\ea\ee
Similarly, one has
\bnn\sup\limits_{0\le t\le T}\|
\n-\bar{\rho}\|_{W^{2,q}} \le
 C,\enn
 which along with (\ref{cxb57})  gives (\ref{cxb44}). The proof of Lemma \ref{xle4}
is completed.
\end{proof}
\begin{lemma}\la{xle5} For any $q\geq 2$, it holds that
\be \la{cxb58}
\sup_{0\le t\le T}\si^2\left(\|\na u_t\|_{H^1}^{2}
 +\|\na u\|_{W^{2,q}}^{2}\right)
 +\int_{0}^T\si^2\|\nabla u_{tt}\|_{2}^2dt\le C .
 \ee

\end{lemma}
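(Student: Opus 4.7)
The plan is to mimic the proof of Lemma \ref{xle3} at one higher level of time differentiation and then bootstrap through the elliptic regularity for the Lam\'e system \eqref{cxb49} and \eqref{lames}. First I would differentiate $(\ref{a1})_2$ twice in $t$ to obtain
\begin{equation*}
(\rho u_{tt})_t + (\rho_t u_t)_t + (\rho u\cdot\na u)_{tt} - \mu\Delta u_{tt} - (\lambda+\mu)\na\div u_{tt} + \na P_{tt} = 0,
\end{equation*}
together with the boundary conditions $u_{tt}\cdot n=0$ and $\curl u_{tt}=2(\kappa-\vartheta) u_{tt}\cdot\omega$ on $\pa\Omega$, which follow from differentiating \eqref{ch1} in $t$ since $\kappa$ and $\vartheta$ are time-independent.

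Next, I would test this equation against $\si^2 u_{tt}$ and integrate over $\Omega$, arriving (after using $(\ref{a1})_1$ to rewrite the time derivatives of $\rho$) at an identity of the form
\begin{equation*}
\frac{d}{dt}\left(\frac{\si^2}{2}\int \rho |u_{tt}|^2\, dx\right) + \si^2\left((\lambda+2\mu)\|\div u_{tt}\|_{L^2}^2 + \mu\|\curl u_{tt}\|_{L^2}^2\right) = \si\si'\int\rho|u_{tt}|^2 dx + R,
\end{equation*}
where $R$ collects lower-order terms in $u_{tt}$, $u_t$, $u$, $P_t$, $P_{tt}$, $\rho_t$, $\rho_{tt}$. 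The boundary integral produced by the $\mu\Delta u_{tt}$ term reduces to $-2\mu\si^2\int_{\pa\Omega}(\kappa-\vartheta)|u_{tt}|^2 ds$, which is favorable under the standing assumption $\vartheta\ge\kappa$. Bounding every piece of $R$ via Lemmas \ref{xle1}--\ref{xle4}, the Gagliardo--Nirenberg and trace inequalities, and applying Gronwall's inequality on $[0,T]$ (using the factor $\sigma$ to absorb the singularity at $t=0$, exactly as in the passage from \eqref{ax401} to \eqref{cxb25}), I expect to conclude
\begin{equation*}
\sup_{0\le t\le T}\si^2\|\rho^{1/2}u_{tt}\|_{L^2}^2 + \int_0^T\si^2\|\na u_{tt}\|_{L^2}^2\, dt \le C.
\end{equation*}

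With this in hand, the bootstrap is routine. Applying Lemma \ref{zhle} to the elliptic system \eqref{cxb49} satisfied by $u_t$ and expanding $(\rho\dot u)_t=\rho u_{tt}+\rho_t u_t+(\rho u\cdot\na u)_t$, I can control $\|\na^2 u_t\|_{L^2}$ by $\|\rho u_{tt}\|_{L^2}$ plus quantities already bounded in \eqref{cxb24} and \eqref{cxb17}, which yields the $\si^2\|\na u_t\|_{H^1}^2$ bound. For $\|\na u\|_{W^{2,q}}$, applying Lemma \ref{zhle} to \eqref{lames} reduces matters to estimating $\|\na(\rho\dot u)\|_{L^q}$; the computation leading to \eqref{cxb53} already controls this in terms of $\|\na u_t\|_{H^1}$ and $\|\na^2 u\|_{L^q}$, so inserting the $u_t$-bound just obtained finishes the proof.

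The main obstacle I anticipate lies in Step~2: after integrating by parts the $\mu\Delta u_{tt}$ term against $u_{tt}$, the interaction with $(\rho u\cdot\na u)_{tt}$ produces boundary-type contributions carrying $u_{tt}\cdot\na u\cdot n$, and the time-differentiated flux $F_{tt}$ appears through the pressure term. To handle these I would use the boundary identity $u\cdot\na u\cdot n=-\kappa|u|^2$ differentiated twice in $t$, then lift the boundary integral back to $\Omega$ via $\curl$ and $\div$ as in \eqref{ax3}--\eqref{cax32}, and finally exploit the twice time-differentiated Helmholtz--Weyl decomposition $\rho\dot u=\na F-\mu\na^\perp\curl u$ (as in the treatment of $J_1$ and $J_2$ in Lemma \ref{xcrle1}) to transfer derivatives off $u_{tt}$ and absorb the resulting $\|\na u_{tt}\|_{L^2}$ terms into the dissipation on the left-hand side.
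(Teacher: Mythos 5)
Your plan matches the paper's proof in its essentials: differentiate $(\ref{a1})_2$ twice in $t$, test against $\si^2 u_{tt}$, keep the boundary term $-2\mu\si^2\int_{\partial\Omega}(\kappa-\vartheta)|u_{tt}|^2\,ds$ on the left (it is nonnegative since $\vartheta\ge\kappa$), apply Gronwall to get $\sup\si^2\|\sqrt{\rho}u_{tt}\|_{L^2}^2+\int_0^T\si^2\|\na u_{tt}\|_{L^2}^2\,dt\le C$, and then bootstrap through the Lam\'e elliptic systems \eqref{cxb49} for $u_t$ and \eqref{lames} for $u$.

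The ``main obstacle'' you flag in your last paragraph, however, does not actually arise. Because the test function is $u_{tt}$, which satisfies $u_{tt}\cdot n=0$ on $\partial\Omega$, no boundary integrals involving $u_{tt}\cdot\na u\cdot n$ ever appear: after rewriting the twice-differentiated momentum equation with the help of $(\ref{a1})_1$, the transport part contributes only interior terms such as $\int\rho u_{tt}^i\,u\cdot\na u_{tt}^i\,dx$ (whose integration by parts produces no boundary contribution since $u\cdot n=0$), and $\na P_{tt}$ integrates by parts to $-\int P_{tt}\div u_{tt}\,dx$ with no boundary remainder. Consequently you never need $F_{tt}$, nor do you need to differentiate the identity $u\cdot\na u\cdot n=-\kappa|u|^2$ in time: $P_{tt}$ is handled directly by Cauchy--Schwarz together with the bound $\int_0^T\|P_{tt}\|_{L^2}^2\,dt\le C$ already established in Lemma~\ref{xle3}. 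The delicate boundary lifting you describe (differentiating the boundary identity and the Helmholtz--Weyl decomposition) was required in Lemma~\ref{xcrle1} precisely because the test function there was $\dot u$, whose normal trace equals $-\kappa|u|^2\ne 0$; once the test function has vanishing normal trace, as $u_{tt}$ does here, that machinery is unnecessary and the estimate closes by the plain $J_1,\dots,J_5$ bookkeeping of \eqref{cxb60}--\eqref{cxb67}.
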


\begin{proof} After differentiating $(\ref{a1})_2$ with respect to $t$ twice, we get the following equality
\be\la{cxb59}\ba
&\n u_{ttt}+\n u\cdot\na u_{tt}-(\lambda+2\mu)\nabla{\rm div}u_{tt}+\mu\nabla^{\perp}\curl u_{tt}\\
&= 2{\rm div}(\n u)u_{tt}
+{\rm div}(\n u)_{t}u_t-2(\n u)_t\cdot\na u_t-(\n_{tt} u+2\n_t u_t)
\cdot\na u\\& \quad- \n u_{tt}\cdot\na u-\na P_{tt}.
 \ea\ee

Then, multiplying (\ref{cxb59}) by $2u_{tt}$ and   integrating over $\Omega ,$ it is easy to get
\be \la{cxb60}\ba
&\frac{d}{dt}\int_{ }\n
|u_{tt}|^2dx+2(\lambda+2\mu)\int_{ }(\div u_{tt})^2dx+2\mu\int_{ }(\curl u_{tt})^2dx \\
&\quad -2\mu\int_{\partial\Omega}(\kappa-\vartheta) |u_{tt}|^2ds\\
&=-8\int_{ }  \n u^i_{tt} u\cdot\na
 u^i_{tt} dx-2\int_{ }(\n u)_t\cdot \left[\na (u_t\cdot u_{tt})+2\na
u_t\cdot u_{tt}\right]dx\\&\quad -2\int_{
}(\n_{tt}u+2\n_tu_t)\cdot\na u\cdot u_{tt}dx-2\int_{ }   \n
u_{tt}\cdot\na u\cdot  u_{tt} dx\\&\quad+2\int_{ } P_{tt}{\rm
div}u_{tt}dx\triangleq\sum_{i=1}^5J_i.
\ea\ee

 First, H\"{o}lder's inequality shows
\be \la{cxb61} \ba |J_1|&\le
C\|\n^{1/2}u_{tt}\|_{L^2}\|\na u_{tt}\|_{L^2}\| u \|_{L^\infty}\\
&\le \de \|\na u_{tt}\|_{L^2}^2+C(\de)\|\n^{1/2}u_{tt}\|^2_{L^2} .
\ea\ee
It follows from (\ref{cxb17}), (\ref{cxb24}), (\ref{cxb25}), and
(\ref{cxb2}) that
\be \la{cxb62}\ba
|J_2|&\le C\left(\|\n
u_t\|_{L^3}+\|\n_t u\|_{L^3}\right)\left(\| u_{tt}\|_{L^6}\| \na
u_t\|_{L^2}+\| \na u_{tt}\|_{L^2}\| u_t\|_{L^6}\right)\\&\le
C\left(\|\n^{1/2} u_t\|^{1/2}_{L^2}\|u_t\|^{1/2}_{L^6}+\|\n_t
\|_{L^6}\| u\|_{L^6}\right)  \| \na u_{tt}\|_{L^2}  \| \na u_{t}\|_{L^2} \\ &\le \de
\|\na u_{tt}\|_{L^2}^2+C(\de)\si^{-3/2},\ea\ee

\be  \la{cxb63}\ba |J_3|&\le C\left(\|\n_{tt}\|_{L^2}
\|u\|_{L^\infty}\|\na u\|_{L^3}+\|\n_{
t}\|_{L^6}\|u_{t}\|_{L^6}\|\na u \|_{L^2}\right)\|u_{tt}\|_{L^6} \\
&\le \de \|\na u_{tt}\|_{L^2}^2+C(\de)\|\n_{tt}\|_{L^2}^2+C(\de)\si^{-1},
\ea\ee
and
\be  \la{cxb64}\ba
|J_4|+|J_5|&\le C\|\n u_{tt}\|_{L^2} \|\na
u\|_{L^3}\|u_{tt}\|_{L^6} +C \|P_{tt}\|_{L^2}\|\na
u_{tt}\|_{L^2}\\
&\le \de \|\na u_{tt}\|_{L^2}^2+C(\de)\|\n^{1/2}u_{tt}\|^2_{L^2}
+C(\de)\|P_{tt}\|^2_{L^2}.
\ea\ee
Choosing $\de$ small enough, combining all these computations we obtain:
\be  \la{cxb66}\ba
&\frac{d}{dt}\|\n^{1/2}u_{tt}\|^2_{L^2}+\|\na u_{tt}\|_{L^2}^2\\
&\le C (\|\n^{1/2}u_{tt}\|^2_{L^2}+C\|\n_{tt}\|^2_{L^2}+C\|P_{tt}\|^2_{L^2})+C \si^{-3/2},
 \ea\ee
 duo to
 \be  \la{cxb65}\ba
\|\nabla u_{tt}\|_{L^2}\leq C(\|\div u_{tt}\|_{L^2}+\|\curl u_{tt}\|_{L^2}) ,
\ea\ee
since $u_{tt}\cdot n=0$ on $\partial\Omega.$
\eqref{cxb66}, (\ref{cxb24}), (\ref{cxb25}) and Gronwall's inequality show that
\be  \la{cxb67}\ba
\sup_{0\le t\le T}\si^2\|\n^{1/2}u_{tt}\|_{L^2}^2+\int_{0}^T\si^2\|\nabla u_{tt}\|_{L^2}^2dt\le C.
\ea\ee
Moreover, by \eqref{cxb48} and \eqref{cxb25}

\be  \la{cxb68}\ba
\sup_{0\le t\le T}\si^2\|\nabla u_t\|_{H^1}^2\le C.
\ea\ee

Finally, combination of \eqref{cxb52}, \eqref{cxb25}, \eqref{cxb44}, \eqref{cxb45}, \eqref{cxb67}, and \eqref{cxb68},
 we obtain
\be  \la{cxb69}\ba
\si\|\na^2 u\|_{W^{1,q}}
& \le C(\si +\si\|\na  u_t\|_{L^2}+\si\| \na(\n
\dot u )\|_{L^{q}}+\si\|\na^2  P\|_{L^{q}})\\
& \le C(1 +  \si\|\na u\|_{H^2}+\sigma^{1/2}(\sigma\|\na u_t\|_{H^1}^2)^{(q-2)/2q}+\sigma\|\nabla^{2}P\|_{L^q})\\
&\le C(1+\sigma^{1/2}(\sigma^{-1})^{(q-2)/2q})\\
&\le C.
\ea\ee
Together with (\ref{cxb67}) and (\ref{cxb68}) yields (\ref{cxb58}) and concluding the proof of Lemma \ref{xle5}.
\end{proof}

\section{\la{se6}Proofs of  Theorems  \ref{th1}}

Through the previous proofs, we've gotten all the a priori estimates what we need. In this section, we will prove the main results of this paper.

{\it Proof of Theorem \ref{th1}.} By Lemma \ref{loc1}, there is a
$T_*>0$ such that the Cauchy problem \eqref{a1}--\eqref{ch1} has a unique classical solution $(\rho,u)$ on $\Omega\times
(0,T_*]$. One will use the a priori estimates, Proposition \ref{pr1} and Lemmas \ref{xle4}-\ref{xle5} to extend the local classical
solution $(\rho,u)$ to be a global one.

First, it is easy to check that
$$ A_1(0)+A_2(0)=0,\,\,  0\leq\rho_0\leq \hat{\rho}.$$
Therefore, there exists a
$T_1\in(0,T_*]$ such that
\be\la{dlbh1}\ba
0\leq\rho\leq2\hat{\rho},\,\,  A_1(T)+A_2(T)\leq 2C_0^{\frac{1}{3}} ,
\ea\ee
hold for $T=T_1.$

Set
\bn \la{dlbh2}
T^*=\sup\{T\,|\,{\rm (\ref{dlbh1}) \ holds}\}.
\en
Obviously, $T^*\geq T_1>0$. Consequently, for any $0<\tau<T\leq T^*$
with $T$ finite, it follows from Lemmas \ref{xle4} and \ref{xle5}
that
 \be \la{dlbh3}\begin{cases}
   \rho-\bar{\rho} \in C([0,T];W^{2,q}), \\ \na u_t \in C([\tau ,T];L^q),\quad
 \na u,\na^2u \in C\left([\tau ,T];
 C (\bar{\Omega})\right),\end{cases}\ee
 where one has used the standard
embedding
$$L^\infty(\tau ,T;H^1)\cap H^1(\tau ,T;H^{-1})\hookrightarrow
C\left([\tau ,T];L^q\right),\quad\mbox{ for any } q\in [2,\infty).  $$
Due to (\ref{cxb17}), (\ref{cxb25}), (\ref{cxb58}), and $(\ref{a1})_2 $
we obtain
\be\ba
&\int_{\tau}^T \left|\left(\int\n|u_t|^2dx\right)_t\right|dt\no
&\le\int_{\tau}^T\left(\|  \n_t  |u_t|^2 \|_{L^1}+2\|  \n  u_t\cdot u_{tt} \|_{L^1}\right)dt\\
&\le C\int_{\tau}^T \left( \| \n|\div u||u_t|^2 \|_{L^2}+\|  |u||\na \n| |u_t|^2 \|_{L^1}+ \|\rho^{\frac{1}{2}}  u_t
\|_{L^2}\|\rho^{\frac{1}{2}}u_{tt} \|_{L^2}\right)dt\\
&\le C\int_{\tau}^T\left( \| \n^{\frac{1}{2}} |u_t|^2 \|_{L^2}\|\na u\|_{L^\infty}+\|  u\|_{L^6}\|\na\n\|_{L^2} \|u_t  \|^2_{L^6}+  \|\rho^{\frac{1}{2}}u_{tt} \|_{L^2}\right)dt\\
&\le C,\ea\ee
which along with \eqref{dlbh3} yields
\be\la{dlbh4} \|\rho^{1/2}u_t\|_{L^2}, \quad\|\rho^{1/2}\dot u\|_{L^2}\in C([\tau,T],L^2).\ee

Finally, we assert that \be \la{dlbh5}T^*=\infty.\ee Otherwise,
$T^*<\infty$. Then by Proposition \ref{pr1}, it holds that
\be\la{dlbh6}\ba
0\leq\rho\leq\frac{7}{4}\hat{\rho} ,\,\,\,A_1(T)+A_2(T)\leq C_0^{\frac{1}{3}} .
\ea\ee
$(\n(x,T^*),u(x,T^*))$ meets
the initial data condition (\ref{dt1}) and (\ref{dt2}) except $ u(\cdot,T^*)\in H^s$, due to (\ref{dlbh4}), Lemmas \ref{xle4} and \ref{xle5}, where  $g(x)\triangleq\n^{1/2}\dot u(x, T^*),\,\,x\in \Omega.$ Therefore, Lemma
\ref{loc1} shows that there exists some $T^{**}>T^*$, such that
(\ref{dlbh1}) holds for $T=T^{**}$, which contradicts the definition of $ T^*.$
As a result, $0<T<T^*=\infty$. By Lemma \ref{loc1},\eqref{xle4}, \eqref{xle5} and \eqref{dlbh3} indicates that $(\rho,u)$ is in fact the unique classical solution defined on $\Omega\times(0,T]$ for any  $0<T<T^*=\infty.$

It remains to prove \eqref{qa1w}. Integrating $\eqref{a1}_1$ over $\O\times (0,T)$ and using \eqref{ch1} yields that \be \la{bz11}\bar\n=\frac{1}{|\O|}\int\n (x,t)dx\equiv \frac{1}{|\O|} \int \n_0dx. \ee

For $G(\rho)$, there exists a suitably small positive constant $\tilde{C} <1$ depending only on $a,\,\gamma,\,\bar{\rho}_0,$ and $\hat \n$ such that for any $\rho\in [0,2\hat\n]$,
\be\label{gine1}  \tilde{C}^2( \rho-\bar{\rho})^2\le \tilde{C}  G(\rho)  \leq    (\rho^\gamma-\bar{\rho}^\gamma)( \rho - \bar{\rho}). \ee

Multiplying $\eqref{a1}_2$ by $\mathcal{B}[\n-\bar\n]$, we get
\be\la{c59} \ba &
\int(P-P(\bar\n))(\n-\bar\n) dx \\&= \left(\int\rho u\cdot\mathcal{B}[\n-\bar\n] dx\right)_t-\int\rho u\cdot\nabla\mathcal{B}[\n-\bar\n]\cdot udx - \int\rho u\cdot\mathcal{B}[\n_t]  dx \\
& \quad  +\mu\int\nabla u\cdot\nabla\mathcal{B}[\n-\bar\n] dx + (\lambda+\mu)\int(\rho-\bar{\rho})\div udx \\
& \leq \left(\int\rho u\cdot\mathcal{B}[\n-\bar\n] dx\right)_t+C(\hat{\rho})\|u\|_{L^{4}}^{2}\|\n-\bar\n\|_{L^2} +C(\hat{\rho})\|\rho u\|_{L^2}^2\\
& \quad  +C\|\rho-\bar{\rho}\|_{L^2}\|\nabla u\|_{L^2} \\
& \leq \left(\int\rho u\cdot\mathcal{B}[\n-\bar\n] dx\right)_t+\de \|\n-\bar\n\|_{L^2}^2 +C(\de)\|\na u\|_{L^2}^2,
\ea\ee
which, along with \eqref{gine1} and \eqref{tdu1}, leads to
\be \la{gine2} \ba
a\tilde{C}\int G(\rho)dx&\leq a\int(\rho^\gamma-\bar{\rho}^\gamma)( \rho - \bar{\rho})dx\\
&\leq 2\left(\int\rho u\cdot\mathcal{B}[\n-\bar\n] dx\right)_t+C_1\phi(t),
\ea\ee
where $\phi(t)\triangleq (\lambda+2\mu)\|\div u \|_{L^{2}}^{2}+\mu\|\curl u\|_{L^{2}}^{2}.$

Moreover, it follows from \eqref{gine1} and Young's inequality that
\be \la{c511} \ba
\left|\int\rho u\cdot\mathcal{B}[\n-\bar\n] dx\right|\leq C_2\left(\frac{1}{2}\|\sqrt{\rho} u\|^2_{L^2}+\int G(\rho)dx\right),
\ea\ee
 which gives
\be \la{c512} \ba
\frac{1}{2}\left(\frac{1}{2}\|\sqrt{\rho} u\|^2_{L^2}+\int G(\rho)dx\right)\leq W(t)\leq \frac{3}{2}\left(\frac{1}{2}\|\sqrt{\rho} u\|^2_{L^2}+\int G(\rho)dx\right),\ea\ee
 where $$W(t)=\int \left(\frac{1}{2}\rho |u|^2+G(\rho)\right)dx-\delta_0\int\rho u\cdot\mathcal{B}[\n-\bar\n] dx,$$ with $\delta_0=\min\{\frac{1}{2C_1},\frac{1}{2C_2}\}.$

Adding \eqref{gine2} multiplied by $\de_0 $ to \eqref{m9}
and using \bnn \int\n |u|^2dx\leq C(\hat{\rho})\|\na u\|_{L^2}^2\leq C_3\phi(t),\enn we obtain for a suitably small constant $\delta_1=\delta_1(a,\delta_0, \tilde{C}_0,C_3)$,
 \bnn W'(t)+2\delta_1W(t)\leq 0,\enn which together with \eqref{c512} yields that for any $t\geq0$,
\begin{equation}\label{c513}
\int\left(\frac{1}{2}\rho|u|^2+G(\rho)\right)dx\leq 4C_0e^{-2\delta_1t}.
\end{equation}
By \eqref{m9}, we have
\be \la{c514} \ba
\int_0^\infty\phi(t)e^{\delta_1 t} dt\leq C.
\ea\ee

Choosing $m=0$ in \eqref{I0}, by \eqref{I1}, \eqref{I21} and \eqref{I3}, we obtain
\be\la{c515}\ba
&\left(\phi(t)-2\int(P-P(\bar{\rho}))\,\div udx-2\mu\int_{\partial\Omega}(\kappa-\vartheta) |u|^2ds\color{black}\right)_{t}+\frac{1}{2}\|\sqrt{\rho}\dot{u}\|^2_{L^2}\\
&\leq C\left(\phi(t)+\int G(\rho)dx\right).
\ea \ee
Notice that $$\|P-\bar{P}\|_{L^2}^2\leq C\|P-P(\bar{\rho})\|_{L^2}^2\leq C\int G(\rho)dx.$$
Multiplying \eqref{c515} by $e^{\delta_1 t}$, and using the fact
\bnn\ba
\left|\int(P-P(\bar{\rho})\,\div udx\right|\leq C\int G(\rho)dx+\frac{1}{4}\phi(t)
\ea\enn
and
\bnn\ba
\left|\int_{\partial\Omega}(\kappa-\vartheta) |u|^2ds\right|\color{black}\leq C \phi(t),
\ea\enn
we get
\be\la{c516}\ba
&\left(e^{\delta_1 t}\phi(t)-2e^{\delta_1 t}\int(P-P(\bar{\rho})\,\div udx-2e^{\delta_1 t}\mu\int_{\partial\Omega}(\kappa-\vartheta) |u|^2ds\color{black}\right)_{t}+\frac{1}{2}e^{\delta_1 t}\|\sqrt{\rho}\dot{u}\|^2_{L^2}\\
&\leq Ce^{\delta_1 t}\left(\phi(t)+\int G(\rho)dx\right),
\ea \ee
which, together with \eqref{c513} and \eqref{c514}, yields that for any $t>0$,
\be\la{c517}\ba
\|\nabla u\|_{L^2}^2\leq Ce^{-\delta_1 t},
\ea \ee
and
\be\la{c518}\ba
\int_0^\infty e^{\delta_1 t}\|\sqrt{\rho}\dot{u}\|^2_{L^2}dt\leq C.
\ea \ee
By \eqref{ax401}, \eqref{h99}, \eqref{c517} and \eqref{c518}, a direct calculation leads to
\be\la{c519}\ba
\|\sqrt{\rho}\dot{u}\|^2_{L^2}\leq Ce^{-\delta_1 t},\,\,t\geq1.
\ea \ee

Finally, together with \eqref{c513}, \eqref{c517}, \eqref{c519} and \eqref{h18}, we obtain \eqref{qa1w} for some positive constant $\tilde{\eta}\leq\delta_1$ depending only on $\mu,$  $\lambda,$  $\gamma,$ $a$, $s$, $\on$, $\hat{\rho}$, $M$, $\Omega$, $p$, $r$ and $C_0$ and  finish the proof.
\thatsall

{\it Proof of Theorem \ref{th2}.}
Suppose $T>0$, we introduce the Lagrangian coordinates
  \be \la{c61}  \begin{cases}\frac{\partial}{\partial \tau}X(\tau; t,x) =u(X(\tau; t,x),\tau),\,\,\,\, &0\leq \tau\leq T,\\
 X(t;t,x)=x, \,\,\,\, &0\leq t\leq T,\,x\in\bar{\Omega}.\end{cases}\ee
By \eqref{dt6}, it is easy to find that \eqref{c61} is well-defined. \eqref{c61} together with $\eqref{a1}_1$ shows
 \be\la{c62}\ba
\rho(x,t)=\rho_0(X(0; t, x)) \exp \left\{-\int_0^t\div u(X(\tau;t, x),\tau)d\tau\right\}.
\ea \ee

If there exists some point $x_0\in \Omega$ such that $\n_0(x_0)=0,$ then for any $t>0$, $X(0; t, x_0(t))=x_0$. Hence, for any $t\geq 0,$ $\rho(x_0(t),t)\equiv 0$ due to \eqref{c62}. As a result, Gagliardo-Nirenberg's inequality shows that for any $\tilde{q}\in(1,\infty)$ and $ \tilde{r}\in  (2,\infty),$
\be\la{c63}\ba\bar{\rho}_0\equiv\bar\n\leq\|\rho-\bar{\rho}\|_{C\left(\ol{\O }\right)} \le C
\|\rho-\bar{\rho}\|_{L^{\tilde{q}}}^{\tilde{\theta}}\|\na \rho\|_{L^{\tilde{r}}}^{1-\tilde{\theta}}
\ea\ee
where $\tilde{\theta}=\tilde{q}(\tilde{r}-2)/(2\tilde{r}+\tilde{q}(\tilde{r}-2))$. Together with \eqref{qa1w}, we gives \eqref{qa2w}. This completes the proof.
\thatsall




\section{Acknowledgements}

The author would like to thank Professors Jing Li and Guocai Cai for their valuable discussions. The
research is partially supported by National Natural Science Foundation of China (Nos. 11961068, 11761058).







\end{document}